\newcommand{\R}{{\mathbb R}}
\newcommand{\N}{{\mathbb N}}
\newcommand{\LL}{{\mathcal L}}
\newcommand{\HH}{{\mathcal H}}
\newcommand{\Div}{\mathrm{div}}
\newcommand{\dist}{\mathrm{dist}}
\newcommand{\supess}{\mathop{\rm ess\: sup }}
\newcommand{\supp}{{\mathrm{supp}}}
\newcommand{\im}{{\mathrm{Im}}}
\newcommand{\Ker}{{\mathrm{Ker}}}
\newcommand{\dd}{\, \mathrm{d}}
\newcommand{\dx}{\, \mathrm{d}x}
\newcommand{\dt}{\, \mathrm{d}t}
\newcommand{\ds}{\, \mathrm{d}s}
\newcommand{\dxdt}{\, \mathrm{d}x\, \mathrm{d}t}
\let\O\Omega
\let\e\varepsilon
\let\wto\rightharpoonup 
\theoremstyle{plain}
\newtheorem{theorem}{Theorem}[section]
\newtheorem{lemma}[theorem]{Lemma}
\newtheorem{proposition}[theorem]{Proposition}
\theoremstyle{definition}
\newtheorem{definition}[theorem]{Definition}
\newtheorem{remark}[theorem]{Remark}
\numberwithin{equation}{section}
\begin{document}
 
\title[Hyperbolic structure for a simplified model of dynamical perfect plasticity]{Hyperbolic structure for a simplified model of dynamical perfect plasticity}
\author[J.-F. Babadjian]{Jean-Fran\c cois Babadjian}
\author[C. Mifsud]{Cl\'ement Mifsud}

\address[J.-F. Babadjian]{Universit\'e Pierre et Marie Curie -- Paris 6, CNRS, UMR 7598 Laboratoire Jacques-Louis Lions, Paris, F-75005, France}
\email{jean-francois.babadjian@upmc.fr}

\address[C. Mifsud]{Universit\'e Pierre et Marie Curie -- Paris 6, CNRS, UMR 7598 Laboratoire Jacques-Louis Lions, Paris, F-75005, France}
\email{mifsud@ljll.math.upmc.fr}

\date{\today}

\begin{abstract}
This paper is devoted to confront two different approaches to the problem of dynamical perfect plasticity. Interpreting this model as a constrained boundary value Friedrichs' system enables one to derive admissible hyperbolic boundary conditions. Using variational methods,
we show the well-posedness of this problem in a suitable weak measure theoretic setting. Thanks to the property of finite speed propagation, we establish a new regularity result for the solution in short time. Finally, we prove that this variational solution is actually a solution of the hyperbolic formulation in a suitable dissipative/entropic sense, and that a partial converse statement holds under an additional time regularity assumption for the dissipative solutions.
\end{abstract}

\maketitle

\section{Introduction}
\label{intro}

\noindent Friedrichs' systems are linear symmetric hyperbolic systems of the form
$$
\begin{cases}
\displaystyle \partial_t U + \sum_{i=1}^n A_i\partial_{x_i}U=0 \quad \text{ in } \R^n \times (0,T),\\
U(t=0)=U_0,
\end{cases}$$
where $U:\R^n \times (0,T) \to \R^m$ is the unknown of the problem, $A_1,\ldots,A_n$ are symmetric $m \times m$ matrices, and $U_0:\R^n \to \R^m$ is a given initial data. They appear in a number of physical systems such as the wave equation or systems of conservation laws. In particular, the system of three-dimensional linearized elasto-dynamics can be put within this framework (see \cite{HughesMarsden}) where $U$ is a vector of size $9$ (with three components for the velocity, and six for the symmetric $3 \times 3$ Cauchy stress), and $A_1,A_2,A_3$ are explicit $9 \times 9$ matrices depending on the Lam\'e coefficients of the material. 

Problems of continuum mechanics are usually settled in a bounded domain $\O \subset \R^n$, which requires to impose a boundary condition. Of course, it is a difficult issue in hyperbolic equations since the initial condition is transported through the characteristics up to the boundary, where the value of the solution might thus be incompatible with the prescribed boundary data. In other words, one has to impose boundary conditions only on a part of the boundary which is not reached by the characteristics (see {\it e.g.} \cite{BLRN,O,MNRR} in the case of scalar conservation laws, or \cite{DBLF} for one-dimensional nonlinear systems). In Friedrichs' seminal work \cite{Friedrichs}, the following type of boundary conditions are considered
\begin{equation}\label{BC1}
(A_\nu -M)U=0\quad\text{on } \partial \Omega \times (0,T),
\end{equation}
where $A_\nu=A_\nu(x):=\sum_{i=1}^n A_i\nu_i(x)$ ($\nu(x)$ is the outer normal to $\O$ at the point $x \in \partial \O$), and $M=M(x)$ is a $m \times m$ matrix, for $x \in \partial \O$, satisfying the following algebraic conditions (in the non-characteristic case, {\it i.e.}, when $A_\nu$ is non-singular, see also \cite{MDS} for more details)
\begin{equation}\label{BC2}
\begin{cases}
M+M^T \text{ is non-negative},\\
\im(A_\nu- M) \cap \im(A_\nu+M) = \left\{ 0 \right\},\\
\R^m=\Ker(A_\nu- M) \oplus \Ker(A_\nu+M).
\end{cases}
\end{equation}
The fact that the symmetric part of $M$ is supposed to be non-negative is a way to ensure that the $L^2(\Omega)$-norm of the solution decreases in time, and thus this hypothesis is related to the uniqueness of the solution.  In other words, the non-negativity of $M+M^T$ is connected with the dissipativity of the equation. The two other assumptions are related to the existence of a solution.

Unfortunately, the previous formulation necessitates to define properly the trace of $U$ on the boundary, which might not be desirable if one is interested in weak solutions in Lebesgue-type spaces (in the spirit of {\it e.g.} \cite{O,MNRR} for a $L^\infty$-theory of boundary value scalar conservation laws). In \cite{MDS}, a general $L^2$-theory for such boundary value Friedrichs' systems has been introduced. The so-called {\it dissipative solutions} are defined as functions $U \in L^2(\Omega\times (0,T);\R^m)$ satisfying, for all constant vector $\kappa \in \R^m$ and all $\varphi \in W^{1,\infty}(\Omega \times (0,T))$ with $\varphi \geq 0$, 
\begin{multline}\label{1714}
 \int_{0}^T \int_{\Omega}  \left|U-\kappa\right|^2\partial_t \varphi\dxdt +  \sum_{i=1}^n\int_{0}^T \int_{\Omega} A_i(U-\kappa)\cdot (U-\kappa)\partial_{x_i}\varphi\dxdt \\
 + \int_\Omega \left|U_0-\kappa\right|^2 \varphi(0)\dx  +\int_0^T\int_{\partial \Omega} M \kappa^{+}\cdot \kappa^{+} \varphi\dd\HH^{n-1}\dt \ge 0,
\end{multline}
where $\kappa^+=\kappa^+(x)$ is the orthogonal projection of $\kappa$ onto the linear space $ \Ker(A_\nu+M) \cap \im A_\nu$. 
As discussed in \cite{MDS}, this formulation naturally discriminates the admissible boundary conditions. In particular, in this case, the matrix $M$ has to be a non-negative symmetric matrix (see Subsection~\ref{sec:entrop}). Note that this kind of dissipative formulation is reminiscent in hyperbolic equations (see {\it e.g.} \cite{K} in the case of scalar conservations laws). Moreover, the family of functions $U \mapsto |U-\kappa|^2$, where $\kappa \in \R^m$, can be thought of as the analogue of the Kru{\v{z}}kov entropy functions in \cite{K}. The term {\it dissipative} refers to the decreasing character of the $L^2$-norm of the solution which prevents conservation of the energy, and to a special class of boundary conditions for hyperbolic systems (see~\cite{BGS} and \cite[Section 4]{MDS}).

\medskip

A number of mechanical problems, such as in elasto-plasticity or generalized non-newtonian fluids, involve a convex constraint. For that reason, it becomes relevant to ask whether one can incoporate convex constraints within a general theory of Friedrichs'  systems. In \cite{DLS}, this problem has been addressed in the full space $\O=\R^n$. The authors define a notion of dissipative solutions (from which the previous formulation \eqref{1714} in \cite{MDS} has been inspired) which are shown in \cite{BMS} to be the (unique) limit of a sequence of viscosity solutions for a regularized diffusive model where the constraint is penalized. The formulation of general constrained Friedrichs' systems in bounded domains becomes therefore a natural extension. However, there might be non trivial interactions between the constraint and the boundary condition (see \eqref{interaction} below), which makes the problem difficult to address in its full generality. This is the reason why, in this paper, we focus our attention to the meaningful particular case of dynamical perfect plasticity. 

To be more precise, we consider a simplified two dimensional problem of anti-plane shear elasto-plasticity (see Subsection \ref{sec:anti-plane} for a formal derivation from three-dimensional small strain elasto-plasticity), where the displacement field $u:\Omega \times [0,T] \to \R$ is scalar valued and the stress $\sigma:\Omega \times [0,T] \to \R^2$ is vector-valued. General considerations of continuum mechanics state that the equation of motion
$$\ddot u-\Div \sigma=0 \quad \text{ in } \O \times (0,T)$$
must be satisfied. Then, following standard models of perfect plasticity, the stress is constrained to remain inside a fixed closed and convex set of $\R^2$. For simplifity, we assume that
\begin{equation}\label{stress-constraint}
|\sigma| \leq 1.
\end{equation}
Moreover, the displacement gradient decomposes additively as
$$\nabla u=e+p,$$
where $e$ and $p:\Omega \times [0,T] \to \R^2$ stand for the elastic and plastic strains, respectively. The elastic strain is related to the stress by means of a linear relation, and, again for simplicity, we set
\begin{equation}\label{e=sigma}
\sigma=e.
\end{equation}
Finally, the plastic variable evolves through the so-called flow rule, which stipulates that
\begin{equation}\label{flow-rule}
\begin{cases}
\dot p=0 & \text{ if }  |\sigma|<1,\\
\frac{\dot p}{|\dot p|}=\sigma & \text{ if }  |\sigma|=1.
\end{cases}
\end{equation}
This system must be supplemented by initial conditions on $(u,\dot u,\sigma,p)$ and boundary conditions. The mathematical analysis of dynamical elasto-plastic models has been performed in \cite{AL,BMora} (see also \cite{S2,TS,AG,DalMasoDeSimoneMora} in the static and quasi-static cases).

From the hyperbolic point of view, this problem can be interpreted as a constrained Friedrichs' system. Formally it can be put within a hyperbolic formulation of the type \eqref{1714} (see Subsection \ref{sec:entrop}) where $U = (\dot u,\sigma) \in \R^3$, and the $3 \times 3$ symmetric matrices $A_1$, $A_2$ are given by
$$A_1=
\begin{pmatrix}
0 & -1 & 0\\
-1 & 0 & 0\\
0 & 0 & 0
\end{pmatrix},
\quad
A_2=
\begin{pmatrix}
0 & 0 & -1\\
0 & 0 & 0\\
-1 & 0 & 0
\end{pmatrix}.
$$
The dissipative formulation is exactly given by \eqref{1714}, except that the constant vector $\kappa$ must belong to the contraint set $K=\R \times B$, where $B$ is the closed unit ball of $\R^2$ (see \eqref{stress-constraint} above). The hyperbolic vision of this problem motivates our choice of boundary conditions. It turns out that, in the unconstrained case ({\it i.e.} the wave equation), admissible dissipative boundary conditions in the sense of \eqref{BC1}--\eqref{BC2} are all of the form
\begin{equation}\label{boundary-condition}
\sigma\cdot \nu +\lambda^{-1}\dot u=0 \quad \text{ on }\partial \O \times (0,T),
\end{equation}
for some $\lambda>0$ (see Lemma \ref{lem:BC}). This choice will be {\it a posteriori} justified by the fact that the variational and dissipative formulations are essentially equivalent.  Note that this type of boundary condition, is quite unusual in solid mechanics. These are not of Robin type since it involves the velocity $\dot u$, and not the displacement $u$. It is closer to Navier's no-slip boundary condition rather found in fluid mechanics problems.

The goal of this paper consists thus in studying this particular model (in any space dimension and with a source term) related to dynamical perfect plasticity from both variational and hyperbolic points of view. First of all, using variational methods we establish a well-posedness result for this model. To this aim, we regularize the problem by considering a elasto-visco-plastic model where the constitutive law \eqref{e=sigma} is replaced by a Kelvin-Voigt visco-elastic law
$$\sigma=\tilde \sigma+\e\nabla \dot u,$$
where $\e>0$ is a viscosity parameter, and stress constraint \eqref{stress-constraint} together with the flow rule \eqref{flow-rule} are replaced by a Perzyna visco-plastic law
$$\dot p=\frac{\tilde \sigma-P_B(\tilde \sigma)}{\e},$$
where $P_B$ denotes the orthogonal projection operator onto $B$. The equation of motion and the boundary condition are thus to be modified into
\[
\begin{cases}
\ddot u-\Div (\tilde\sigma+\e\nabla \dot u)=0 \quad \text{ in } \O \times (0,T),\\
(\tilde \sigma+\e\nabla \dot u)\cdot \nu +\lambda^{-1}\dot u=0 \quad \text{ on }\partial \O \times (0,T).
\end{cases}
\]
The well-posedness of this regularized model is presented in Section~\ref{sec:3}. 

In Section~\ref{sec:4}, we prove the existence and uniqueness of a (variational) solution for the original model by means of a vanishing viscosity analysis as $\e\to 0$. However, since in the limit, the stress satisfies the constraint $|\sigma| \leq 1$, the original boundary condition \eqref{boundary-condition} cannot be satisfied at points of the boundary where $|\dot u|>\lambda$. Therefore, a relaxation phenomenon occurs (see Proposition \ref{relax}) which implies that the boundary condition \eqref{boundary-condition} relaxes as
\begin{equation}\label{interaction}
\sigma\cdot\nu+\lambda^{-1} T_\lambda(\dot u) = 0\quad \text{ on } \partial \Omega \times (0,T),
\end{equation}
where $T_\lambda(z)=\min(-\lambda,\max(z,\lambda))$ is the truncation of $z \in \R$ by the values $\pm\lambda$. It shows an interesting interaction which imposes the boundary condition to accomodate the constraint. Note that since $\lambda \in (0,+\infty)$, the important cases of Dirichlet and Neumann boundary conditions are prohibited by this formulation. In Subsection \ref{sec:lambda}, we show by means of asymptotic analysis that the Dirichlet (resp. Neumann) boundary condition can be recovered by letting $\lambda \to 0$ (resp. $\lambda \to +\infty$).

As usual in plasticity, the solution happens to concentrate, leading to a bounded variation solution for the displacement, and a measure solution for the plastic strain.  In Section~\ref{sec:5}, using the property of finite speed propagation, we prove a new regularity result in plasticity which states that, provided the data are smooth and compactly supported in space, the solution is smooth as well in short time. The argument rests on a Kato inequality (Proposition \ref{prop_kato_ineq}) which states a comparison principle between two solutions associated to different data. The fact that the data is compactly supported in $\O$ together with the finite speed propagation property ensures that, in short time, the boundary is not reached by the solution so that the boundary condition can be ignored, and one can argue as in the full space. To our knowledge, it seems to be the first regularity result in dynamical perfect-plasticity, and its generalization to more general vectorial models will be the object of a forthcoming work. 

Finally, in Section~\ref{sec:6}, we establish rigorous links between the variational and hyperbolic formulations. We show that any variational solutions generate dissipative solutions. Conversely, provided the solution of the hyperbolic problem are smoother in time, variational solutions can be deduced from the dissipative formulation.

\section{Mathematical preliminaries}
\label{sec:2}

\subsection{General notation}

If $a$ and $b \in \R^n$, we write $a \cdot b$ for the Euclidean scalar product, and we denote by $|a|=\sqrt{a \cdot a}$ the associated norm.  Let $B:=\{x \in \R^n : |x|\leq 1\}$  be the closed unit ball in $\R^n$, and $P_B$ be the orthonormal projection onto $B$, {\it i.e.}, $P_B(\sigma)=\sigma/|\sigma|$ if $\sigma \neq 0$, and $P_B(\sigma)=0$ if $\sigma=0$. It is a standard fact of convex analysis that the function 
$$\sigma \mapsto \frac{|\sigma-P_B(\sigma)|^2}{2\e}$$
is convex, of class $\mathcal C^1$, and that its differential is given by $\sigma \mapsto \big(\sigma-P_B(\sigma)\big)/\e$.
In addition, its convex conjugate is $p \mapsto |p|+\e |p|^2/2$, and in particular, 
$$p=\frac{\sigma-P_B(\sigma)}{\e}\quad \Longleftrightarrow \quad \sigma \cdot p=|p|+\e |p|^2.$$

\medskip

We write $\mathbb M^{n \times n}$ for the set of real $n \times n$ matrices, and $\mathbb M^{n \times n}_{\rm sym}$ for that of all real symmetric $n \times n$ matrices. Given a matrix $A \in \mathbb M^{n \times n}$, we let $|A|:=\sqrt{{\rm tr}(A A^T)}$ ($A^T$ is the transpose of $A$, and ${\rm tr }A$ is its trace) which defines the usual Euclidean norm over $\mathbb M^{n \times n}$. We recall that for any two vectors $a$ and $b \in \R^n$, $a \otimes b \in \mathbb M^{n \times n}$ stands for the tensor product, {\it i.e.}, $(a \otimes b)_{ij}=a_i b_j$ for all $1 \leq i,j \leq n$, and $a \odot b:= (a \otimes b + b \otimes a) /2\in \mathbb M^{n \times n}_{\rm sym}$ denotes  the symmetric tensor product.
 
\subsection{Functional spaces}

Let $\Omega \subset \R^n$ be an open set. We use standard notation for Lebesgue and Sobolev spaces. In particular, for $1\leq p\leq \infty$, the $L^p(\O)$-norms of the various quantities are denoted by $\| \cdot\|_p$. 

We write $\mathcal{M}(\Omega;\R^m)$ (or simply $\mathcal M(\Omega)$ if $m=1$) for the space of bounded Radon measures in $\Omega$ with values in $\R^m$, endowed with the  norm $|\mu|(\Omega)$, where $|\mu|\in \mathcal{M}(\Omega)$ is the total variation of the measure $\mu$. The Lebesgue measure in $\R^n$ is denoted by $\LL^n$, and the $(n-1)$-dimensional Hausdorff measure by $\HH^{n-1}$.  

We denote by $H(\Div,\Omega)$ the Hilbert space of all $\sigma \in L^2(\Omega;\R^n)$ such that $\Div \sigma \in L^2(\Omega)$. We recall that if $\O$ is bounded with Lipschitz boundary and $\sigma \in H(\Div,\Omega)$, its normal trace, denoted by $\sigma\cdot\nu$, is well defined as an element of $H^{-1/2}(\partial \Omega)$. If further $\sigma \in H(\Div,\Omega) \cap L^\infty(\Omega;\R^n)$, it turns out that $\sigma\cdot \nu \in L^\infty(\partial \Omega)$ with $\|\sigma\cdot\nu\|_{L^\infty(\partial \O)}\leq \|\sigma\|_\infty$ (see~\cite[Theorem 1.2]{Anzellotti}). Moreover, according to \cite[Theorem 2.2]{CF}, if $\O$ is of class $\mathcal C^2$, then for all $\varphi \in L^1(\partial \O)$,
\begin{equation}\label{eq:sigmanu}
\lim_{\e \to 0}\int_0^1\int_{\partial \O} \big(\sigma(y-\e s \nu(y))\cdot \nu(y) - (\sigma\cdot \nu)(y) \big)\varphi(y)\dd\HH^{n-1}(y)\ds=0,
\end{equation}
where $\nu$ denotes the outer unit normal to $\partial \O$.

The space $BV(\Omega)$ of functions of bounded variation in $\Omega$ is made of all functions $u \in L^1(\Omega)$ such that the distributional gradient $Du\in \mathcal M(\Omega;\R^n)$. We refer to~\cite{AmbrosioFuscoPallara,EG,Giusti} for a detailed presentation of this space. We just recall here few facts. If $\O$ has Lipschitz boundary, any function $u \in BV(\O)$ admits a trace, still denoted by $u \in L^1(\partial \O)$, such that Green's formula holds (see {\it e.g.} Theorem 1, Section 5.3 in \cite{EG}). Moreover, according to \cite[Theorem 4]{AnzellottiGiaquinta}, if  $\O$ is further of class $\mathcal{C}^1$, for every $\e>0$, there exists a constant $c_{\e}(\Omega)>0$ such that for every $u\in BV(\Omega)$,
\begin{equation}\label{eq:traceBV}
\|u\|_{L^1(\partial \O)}\le (1+\e)|Du|(\Omega) + c_{\e}(\Omega)\|u\|_1.
\end{equation} 
Moreover, if $\O$ is of class $\mathcal C^2$, one has
\begin{equation}\label{eq:traceu}
\lim_{\e \to 0} \int_0^1\int_{\partial \O} |u(y-\e s \nu(y))-u(y)|\dd\HH^{n-1}(y)\ds=0.
\end{equation}

Conversely, Gagliardo's extension result (see \cite[Theorem 2.16 \& Remark 2.17]{Giusti}) states that if $\O \subset \R^n$ is a bounded open set of class $\mathcal C^1$, and $g \in L^1(\partial \O)$, for each $\e>0$, there exists a function $u_\e \in W^{1,1}(\O)$ such that
\begin{equation}\label{eq:lift}\begin{cases}
u_\e=g \text{ on }\partial \O,\\
\|u_\e\|_1 \leq \e \|g\|_{L^1(\partial \O)},\\
\|\nabla u_\e\|_1 \leq (1+\e)\|g\|_{L^1(\partial \O)}.
\end{cases}
\end{equation}
Let us finally mention a variant of the usual approximation result for $BV$ functions \cite[Lemma 5.2]{Anzellotti}.
\begin{proposition}\label{prop:MS}
Let $\O \subset \R^n$ be a bounded open set with Lipschitz boundary, $u \in BV(\O)$ and $\sigma \in L^2(\O;\R^n)$. There exists a sequence $(u_j) \subset W^{1,1}(\O)$ such that
$u_j \wto u$ weakly* in $BV(\O)$, $|Du_j-\sigma|(\O) \to |Du-\sigma|(\O)$ and $u_j=u$ on $\partial \O$ for all $j \in \mathbb N$.
\end{proposition}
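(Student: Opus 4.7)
The approach is to run Anzellotti's partition-of-unity construction from \cite[Lemma 5.2]{Anzellotti}, tracking the extra vector field $\sigma$ through each step. The boundary layering and trace-preservation mechanism used there are unaffected by this modification, so the only new content is a small bookkeeping in the estimate of the bulk total variation. Fix $\eta>0$ and, following \cite{Anzellotti}, choose an exhaustion of $\O$ by relatively compact open sets $\O_1\subset\subset\O_2\subset\subset\cdots$ together with a smooth partition of unity $(\zeta_k)$ subordinate to the locally finite cover $\{A_k\}$ of $\O$, where $A_0=\O_2$ and $A_k=\O_{k+2}\setminus\overline{\O_k}$ for $k\ge 1$. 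For each $k$, pick $\e_k>0$ so small that $\supp(\rho_{\e_k}*(u\zeta_k))\subset A_k$ and
\begin{equation*}
\|\rho_{\e_k}*(u\zeta_k)-u\zeta_k\|_1+\|\rho_{\e_k}*(u\nabla\zeta_k)-u\nabla\zeta_k\|_1+\|\rho_{\e_k}*(\zeta_k\sigma)-\zeta_k\sigma\|_1\le \eta/2^k,
\end{equation*}
and set $u_\eta:=\sum_k\rho_{\e_k}*(u\zeta_k)$, a locally finite sum of smooth functions.

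Using $\sum_k\zeta_k\equiv 1$ on $\O$ (hence $\sum_k\nabla\zeta_k\equiv 0$ and $\sigma=\sum_k\zeta_k\sigma$), term-by-term differentiation yields
\begin{equation*}
\nabla u_\eta-\sigma=\sum_k\rho_{\e_k}*\bigl(\zeta_k(Du-\sigma)\bigr)+R_\eta,
\end{equation*}
with $\|R_\eta\|_1\le C\eta$ by the three smallness estimates above; the contribution $\sum_k\rho_{\e_k}*(u\nabla\zeta_k)$ is controlled precisely because $\sum_k u\nabla\zeta_k=0$, which turns it into $\sum_k[\rho_{\e_k}*(u\nabla\zeta_k)-u\nabla\zeta_k]$. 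Since convolution against a non-negative mollifier does not increase the total variation and $\zeta_k\ge 0$,
\begin{equation*}
|\nabla u_\eta-\sigma|(\O)\le \sum_k|\zeta_k(Du-\sigma)|(\O)+C\eta=\int_\O\Bigl(\sum_k\zeta_k\Bigr)\dd|Du-\sigma|+C\eta=|Du-\sigma|(\O)+C\eta.
\end{equation*}
In particular $u_\eta\in W^{1,1}(\O)$ (since $\sigma\in L^2\subset L^1$ on the bounded set $\O$), and the first smallness bound gives $\|u_\eta-u\|_1\le\eta$.

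Extracting a diagonal sequence $\eta_j\downarrow 0$ and setting $u_j:=u_{\eta_j}$, one gets $u_j\to u$ in $L^1(\O)$ with uniformly bounded $|Du_j|(\O)$, hence $u_j\wto u$ weakly$^*$ in $BV(\O)$. Weak$^*$ lower semicontinuity of the total variation applied to $(Du_j-\sigma\LL^n)\wto(Du-\sigma\LL^n)$ in $\mathcal M(\O;\R^n)$, combined with the $\limsup$ bound above, yields the desired strict convergence $|Du_j-\sigma|(\O)\to|Du-\sigma|(\O)$. The identity $u_j=u$ on $\partial\O$ is the main obstacle: it is precisely the content of the trace-preservation step in \cite[Lemma 5.2]{Anzellotti}, whose argument (based on Green's formula with test vector fields and the carefully layered choice of the scales $\e_k$) depends only on $u$ and on the covering $(A_k,\zeta_k,\e_k)$, and is therefore inherited unchanged since the auxiliary field $\sigma$ plays no role in that step.
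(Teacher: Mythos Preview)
Your proposal is correct and is exactly the approach the paper indicates: the paper gives no proof of this proposition beyond the remark that it is ``a variant of the usual approximation result for $BV$ functions \cite[Lemma 5.2]{Anzellotti}.'' You have spelled out precisely the extra bookkeeping needed to carry the vector field $\sigma$ through Anzellotti's mollification-with-partition-of-unity construction, and you correctly observe that the trace-preservation step depends only on $u$ and the layered covering, hence transfers unchanged.
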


\subsection{Generalized stress/strain duality}

According to \cite[Definition 1.4]{Anzellotti}, we define a duality pairing between stresses and plastic strains as follows.
\begin{definition}
\label{def_Sigma_p}
Let $u \in BV(\Omega)\cap L^2(\Omega)$ be such that $D u = e+p$ for some $e\in L^2(\Omega;\R^n)$ and $p\in \mathcal{M}(\Omega;\R^n)$, and let $\sigma \in H(\Div,\Omega) \cap L^\infty(\Omega;\R^n)$. We define the distribution $\left[\sigma\cdot p \right]\in \mathcal D'(\O)$ by
\[
\Braket{\left[\sigma\cdot p \right], \varphi} = \Braket{\left[\sigma\cdot Du \right], \varphi} - \int_\Omega \sigma \cdot e \varphi\dx \quad \text{ for all } \varphi\in \mathcal{C}^{\infty}_c(\Omega),
\]
where $\left[\sigma\cdot Du \right]$ is given by
\[
\Braket{\left[\sigma\cdot Du \right], \varphi} = -\int_{\Omega} u (\Div \sigma) \varphi \dx - \int_{\Omega} u (\sigma \cdot \nabla \varphi) \dx \quad \text{ for all } \varphi\in \mathcal{C}^{\infty}_c(\Omega).
\]
\end{definition}

\begin{remark}\label{rmk:ineq1}
\label{rem_radon_measure}
Using an approximation procedure as in \cite[Lemma 5.2]{Anzellotti}, one can show that $[\sigma\cdot p]$ is actually a bounded Radon measure in $\Omega$ satisying
$$|[\sigma\cdot p]| \leq \|\sigma\|_\infty |p| \quad \text{ in } \mathcal M(\Omega).$$
\end{remark}

A slight adaptation of \cite[Theorem 1.9]{Anzellotti} shows the following integration by parts formula.

\begin{proposition}
\label{IPP_sig_u_varphi}
Let $u \in BV(\Omega)\cap L^2(\Omega)$, $\sigma \in H(\Div,\Omega) \cap L^\infty(\Omega;\R^n)$ and $\varphi \in W^{1,\infty}(\O)$. Then, 
\[
\int_\Omega \varphi \dd[\sigma\cdot Du ] + \int_\Omega u \varphi (\Div \sigma) \dx + \int_\Omega u (\sigma \cdot \nabla \varphi) \dx = \int_{\partial \Omega} (\sigma \cdot \nu) u \varphi \dd \mathcal{H}^{n-1}.
\]
\end{proposition}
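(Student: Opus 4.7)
The plan is to extend the defining distributional identity of $[\sigma\cdot Du]$ in Definition~\ref{def_Sigma_p} (which only applies to test functions in $\mathcal{C}^\infty_c(\O)$) to an arbitrary $\varphi\in W^{1,\infty}(\O)$. The surface integral on $\partial\O$ appearing in the formula will arise precisely to compensate for the fact that $\varphi$ no longer vanishes on the boundary. This is the scheme of Anzellotti's Theorem~1.9 in \cite{Anzellotti}, lightly adapted to the present assumptions.

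First I would introduce the Lipschitz boundary cutoff $\eta_\e(x):=\min\bigl(1,\dist(x,\partial\O)/\e\bigr)$, which satisfies $0\le\eta_\e\le 1$, $\eta_\e\to 1$ pointwise in $\O$, $\eta_\e=0$ on $\partial\O$, and $\nabla\eta_\e(x)=-\nu(\pi(x))/\e$ on the tubular strip $\{0<\dist(\cdot,\partial\O)<\e\}$ and zero elsewhere, where $\pi$ denotes the nearest-point projection onto $\partial\O$. The product $\varphi\eta_\e$ belongs to $W^{1,\infty}(\O)$ and has vanishing trace on $\partial\O$. Since by Remark~\ref{rmk:ineq1} the pairing $[\sigma\cdot Du]$ is a bounded Radon measure on $\O$, a standard interior mollification would allow me to insert $\varphi\eta_\e$ into the defining identity of Definition~\ref{def_Sigma_p}, yielding
\[
\int_\O \varphi\eta_\e \dd[\sigma\cdot Du] + \int_\O u\varphi\eta_\e(\Div\sigma) \dx + \int_\O u\,\sigma\cdot\nabla(\varphi\eta_\e) \dx = 0.
\]
Developing $\nabla(\varphi\eta_\e)=\eta_\e\nabla\varphi+\varphi\nabla\eta_\e$ and substituting the explicit form of $\nabla\eta_\e$ rearranges this into
\[
\int_\O \varphi\eta_\e \dd[\sigma\cdot Du] + \int_\O u\varphi\eta_\e(\Div\sigma) \dx + \int_\O u\eta_\e\,\sigma\cdot\nabla\varphi \dx = \frac{1}{\e}\int_{\{0<\dist(\cdot,\partial\O)<\e\}} u\varphi\,\sigma\cdot(\nu\circ\pi) \dx.
\]

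The final step is to let $\e\to 0$. On the left-hand side, the three integrals converge by dominated convergence to the three quantities appearing in the proposition, using that $\varphi\in L^\infty$ integrates against the finite measure $[\sigma\cdot Du]$ and that $u\Div\sigma$, $u\sigma\cdot\nabla\varphi\in L^1(\O)$ thanks to the hypotheses $u\in L^2$, $\Div\sigma\in L^2$, $\sigma,\nabla\varphi\in L^\infty$. For the right-hand side, parametrizing the thin strip by normal coordinates $x=y-s\nu(y)$ with $(s,y)\in(0,\e)\times\partial\O$, I would rewrite the integral as
\[
\frac{1}{\e}\int_0^\e\!\!\int_{\partial\O} u(y-s\nu(y))\varphi(y-s\nu(y))\,\sigma(y-s\nu(y))\cdot\nu(y)\,J(s,y) \dd\HH^{n-1}(y) \ds,
\]
and the trace convergences \eqref{eq:sigmanu} for $\sigma\cdot\nu$ and \eqref{eq:traceu} for $u$, combined with the uniform continuity of $\varphi$ and $J(s,y)\to 1$ as $s\to 0$, identify the limit as $\int_{\partial\O} u\varphi\,\sigma\cdot\nu\dd\HH^{n-1}$.

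The main obstacle is precisely this last boundary-strip convergence: because of the $1/\e$ weight no naïve dominated-convergence argument applies, and one has to invoke the precise interior-to-boundary convergence lemmas \eqref{eq:sigmanu} and \eqref{eq:traceu} recalled in the preliminaries (which themselves require enough regularity of $\partial\O$, as stated there). A secondary, more routine, technical point is justifying that the non-smooth function $\varphi\eta_\e$ can be used as a test function in the distributional identity of Definition~\ref{def_Sigma_p}; this is handled by interior mollification together with the fact that $[\sigma\cdot Du]$ is a bounded Radon measure (Remark~\ref{rmk:ineq1}).
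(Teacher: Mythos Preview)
The paper does not give its own proof of this proposition; it simply states that it is ``a slight adaptation of \cite[Theorem 1.9]{Anzellotti}''. Your sketch is a correct outline of one standard route to this integration-by-parts formula and is consistent with the cited reference, so there is no genuine gap.

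One remark on generality is worth making. Your boundary-strip limit relies on the trace-convergence lemmas \eqref{eq:sigmanu} and \eqref{eq:traceu}, which in this paper are recorded under a $\mathcal C^2$ hypothesis on $\partial\O$. Anzellotti's original argument proceeds instead by approximating $u$ by smooth $W^{1,1}$ functions with the same trace on $\partial\O$ (cf.~Proposition~\ref{prop:MS}) and passes to the limit in the classical integration-by-parts formula; that approach needs only a Lipschitz boundary. Since the paper later applies Proposition~\ref{IPP_sig_u_varphi} in Section~\ref{sec:4} under a $\mathcal C^1$ assumption, the approximation route is slightly better suited to the paper's needs, but within the $\mathcal C^2$ setting of Sections~\ref{sec:5}--\ref{sec:6} your argument is entirely adequate.
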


\section{Description of the model}

\subsection{Small strain elasto-plasticity}

To simplify the presentation of the model, we consider the physical three-dimensional case. We assume that the reference configuration of the elasto-plastic body under consideration occupies the volume $\mathcal B \subset \R^3$. In the framework of small strain elasto-plasticity, the natural kinematic variable is the displacement field $\boldsymbol u:\mathcal B \times [0,T] \to \R^3$ (or the velocity $\boldsymbol v:=\dot {\boldsymbol u}$). Denoting by $E\boldsymbol u:=(D\boldsymbol u+D\boldsymbol u^T)/2:\mathcal B \times [0,T]\to \mathbb M^{3 \times 3}_{\rm sym}$ the linearized strain tensor, small strain elasto-plasticity assumes that $E\boldsymbol u$ decomposes additively as
\begin{equation}\label{plast1}
E\boldsymbol u=\boldsymbol e+\boldsymbol p,
\end{equation}
where $\boldsymbol e$ and $\boldsymbol p:\mathcal B \times [0,T] \to \mathbb M^{3 \times 3}_{\rm sym}$ stand for the elastic and plastic strains, respectively. The elastic strain is related to the Cauchy stress tensor $\boldsymbol \sigma:\mathcal B \times [0,T]\to \mathbb M^{3 \times 3}_{\rm sym}$ by means of Hooke's law $\boldsymbol \sigma=\mathbb C \boldsymbol e$, where $\mathbb C$ is the symmetric fourth order elasticity tensor. For example, in the isotropic case, one has
\begin{equation}\label{plast2}
\boldsymbol \sigma=\lambda ({\rm tr}\boldsymbol e) I +2\mu \boldsymbol e,
\end{equation}
where $\lambda$ and $\mu$ are the Lam\'e coefficients satisfying $\mu>0$ and $3\lambda+2\mu>0$. In a dynamical framework and in the presence of external body loads $\boldsymbol f:\mathcal B \times [0,T] \to \R^3$, the equations of motion are a system of partial differential equations which writes as
\begin{equation}\label{plast3}
\ddot{\boldsymbol u}-\Div\boldsymbol \sigma=\boldsymbol f \quad \text{ in }\mathcal B \times (0,T).
\end{equation}
Plasticity is characterized by the existence of a yield zone in the stress space beyong which the Cauchy stress is not permitted to live. The stress tensor is indeed constrained to belong to a fixed nonempty, closed and convex subset of $\mathbb M^{3 \times 3}_{\rm sym}$. In the case of Von Mises plasticity, the constraint only acts on the (trace free) deviatoric stress $\boldsymbol \sigma_D:=\boldsymbol \sigma-({\rm tr}\boldsymbol \sigma) I/3$, and reads as
\begin{equation}\label{plast4}
|\boldsymbol \sigma_D|\leq k,
\end{equation}
where $k>0$ is a critical stress value. The evolution of the plastic strain is described by means of the flow rule and is expressed with the Prandtl-Reuss law
\begin{equation}\label{plast5}
\begin{cases}
\dot{\boldsymbol p}= 0 & \text{ if }  |\boldsymbol \sigma_D|<k,\\
\frac{\dot{\boldsymbol p}}{|\dot{\boldsymbol p}|}=\frac{\boldsymbol\sigma_D}{k} & \text{ if }  |\boldsymbol \sigma_D|=k.
\end{cases}
\end{equation}
The system \eqref{plast1}--\eqref{plast5} is supplemented by initial and boundary conditions which will be discussed later.

\subsection{Anti-plane shear}\label{sec:anti-plane}

Denoting by $(\boldsymbol e_1,\boldsymbol e_2,\boldsymbol e_3)$ the canonical basis of $\R^3$, we assume that $\mathcal B$ is invariant in the $\boldsymbol e_3$ direction so that $\mathcal B=\O \times \R$, where $\O \subset \R^2$ is a bounded open set. We also suppose that the displacement is anti-plane
$\boldsymbol u (x_1,x_2,x_3)=u(x_1,x_2) \boldsymbol e_3$ for some scalar function $u:\O\times [0,T] \to \R$, so that computing the linearized strain yields
$$E\boldsymbol u=(\partial_{x_1} u) \boldsymbol e_1 \odot \boldsymbol e_3+(\partial_{x_2} u) \boldsymbol e_2 \odot \boldsymbol e_3$$
corresponding to pure shear strain.  We thus assume that the elastic and plastic strains conserve this special structure so that
$$\boldsymbol p=p_1  \boldsymbol e_1 \odot \boldsymbol e_3+p_2  \boldsymbol e_2 \odot \boldsymbol e_3, \quad \boldsymbol e=e_1  \boldsymbol e_1 \odot \boldsymbol e_3+e_2  \boldsymbol e_2 \odot \boldsymbol e_3,$$
for some functions $e_1$, $e_2$, $p_1$ and $p_2:\O \times [0,T]\to \R$. Denoting by $e=(e_1,e_2)$ and $p=(p_1,p_2)$, the additive decomposition \eqref{plast1} now reads as
$$\nabla u=e+p.$$
Computing the Cauchy stress according to \eqref{plast2} yields a pure shear stress $\boldsymbol \sigma=2\mu \boldsymbol e$, so that denoting by $\sigma:=(\sigma_{13},\sigma_{23})$ its only nonzero components, we have
$$\sigma=\mu e.$$
We also assume that the body load is compatible with the anti-plane assumption, {\it i.e.}, $\boldsymbol f=f\boldsymbol e_3$, for some $f:\O\times [0,T] \to \R$, so that the equations of motion  \eqref{plast3} becomes a scalar equation
$$\ddot u-\Div \sigma=f \quad \text{ in }\O \times (0,T).$$
Finally, the stress constraint \eqref{plast4} now reads as $|\sigma|\leq k/\sqrt 2$, and the flow rule \eqref{plast5} is given by
$$
\begin{cases}
\dot p=0 & \text{ if }  |\sigma|<k/\sqrt 2,\\
\frac{\dot p}{|\dot p|}=\frac{\sqrt 2 \sigma}{k} & \text{ if }  |\sigma|=k/\sqrt 2.
\end{cases}
$$
In order to simplify notation, we assume henceforth that $\mu=1$ (so that $\sigma=e$) and $k=\sqrt 2$. The simplified model of plasticity thus consists in looking for functions $u:\O \times [0,T] \to \R$, $\sigma:\O\times [0,T] \to \R^2$ and $p:\O\times [0,T] \to \R^2$ such that the following system holds in $\O \times (0,T)$:
\begin{equation}\label{general_model}
\begin{cases}
\nabla u=\sigma+p,\\
\ddot u-\Div \sigma=f,\\
|\sigma| \leq 1,\\
\dot p=0 & \text{ if }  |\sigma|<1,\\
\frac{\dot p}{|\dot p|}= \sigma & \text{ if }  |\sigma|=1.
\end{cases}
\end{equation}
Note that the flow rule can be equivalently be written as
\begin{equation}\label{Hill}
\sigma\cdot \dot p=|\dot p|.
\end{equation}
which expresses Hill's principle of maximal plastic work.

We supplement the system with initial conditions on the displacement, the velocity, the stress and the plastic strain
$$(u,\dot u,\sigma,p)(0)=(u_0,v_0,\sigma_0,p_0).$$
The precise mathematical formulation of this model will be the object of Section \ref{sec:4}. In particular, the flow rule \eqref{Hill} will have to be interpreted in a suitable measure theoretic sense according to the generalized stress/strain duality introduced in Definition \ref{def_Sigma_p}.

The discussion of admissible boundary conditions is the object of the following paragraphs, once the hyperbolic structure of the system will be described.

\subsection{Dissipative formulation of the model}\label{sec:entrop}

In this section, we perform formal manipulations on the system \eqref{general_model} in order to write it in a different form, more appropriate to describe  hyperbolicity. To do that, we denote by $U:=(\dot u,\sigma)$, and observe that the first two equations of \eqref{general_model} can be written as
\begin{equation}\label{entrop}
\partial_t U + A_1\partial_{x_1} U + A_2 \partial_{x_2}U+P=F,
\end{equation}
where $F=(f,0,0)$, $P=(0,\dot p_1,\dot p_2)$ and 
\begin{equation}\label{eq:Ai}
A_1=
\begin{pmatrix}
0 & -1 & 0\\
-1 & 0 & 0\\
0 & 0 & 0
\end{pmatrix},
\quad
A_2=
\begin{pmatrix}
0 & 0 & -1\\
0 & 0 & 0\\
-1 & 0 & 0
\end{pmatrix}.
\end{equation}
Taking the scalar product of \eqref{entrop} with $U$, yields
$$\frac12 \partial_t|U|^2 +\frac12 \partial_{x_1} (A_1 U \cdot U) +  \frac12\partial_{x_2} (A_2 U \cdot U) +P\cdot U= F \cdot U,$$
while, for every constant vector $\kappa=(k,\tau) \in K:=\R \times B$, taking the scalar product of \eqref{entrop} with $\kappa$ leads to
$$\partial_t (U\cdot \kappa) + \partial_{x_1}(A_1 U \cdot \kappa)  + \partial_{x_2}(A_2 U \cdot \kappa ) + P\cdot \kappa=F \cdot \kappa.$$
Substracting both previous equalities, and using that $P\cdot (U-\kappa)=\dot p \cdot (\sigma-\tau) \geq 0$ according to the flow rule written as \eqref{Hill}, we infer that
\begin{equation}\label{1652}
\partial_t|U-\kappa|^2 + \sum_{i=1}^2 \partial_{x_i} \big(A_i (U-\kappa) \cdot (U-\kappa)\big)   \leq 2F \cdot (U-\kappa).
\end{equation}
We then multiply the previous inequality by a test function $\varphi \in \mathcal C^\infty_c(\O \times (-\infty,T))$ with $\varphi \geq 0$, and integrate by parts to obtain
\begin{multline*}
 \int_0^T\int_\O |U-\kappa|^2\dot \varphi\dx\dt + \sum_{i=1}^2 \int_0^T\int_\O A_i (U-\kappa) \cdot (U-\kappa)\partial_{x_i}\varphi \dx\dt \\
+ \int_\O |U_0-\kappa|^2\varphi(0)\dx +2 \int_0^T \int_\O F \cdot (U-\kappa)\varphi \dx\dt \geq 0,
\end{multline*}
which is precisely the formulation of constrained Friedrichs' systems as defined in \cite{DLS} without taking care of boundary conditions since $\varphi$ vanishes in a neighborhood of $\partial \O \times (0,T)$.  

\medskip

In order to account for boundary condition, we follow an approach introduced in \cite{MDS}. Following the pioneering work \cite{Friedrichs}, we are formally interested in dissipative boundary conditions of the form
\begin{equation}
\label{friedrichs_BC}
(A_\nu -M)U=0\quad\text{on } \partial \Omega \times (0,T),
\end{equation}
where $A_\nu=A_1\nu_1 + A_2\nu_2$ ($\nu=(\nu_1,\nu_2)$ is the outer normal to $\partial \O$), and $M \in \mathbb M^{3 \times 3}$ is a boundary matrix satisfying the following algebraic conditions
\begin{equation}\label{eq:alg_cond}
\begin{cases}
M=M^T,\\
M \text{ is non-negative},\\
\Ker A_\nu \subset \Ker M,\\
\R^3=\Ker(A_\nu- M) + \Ker(A_\nu+M).
\end{cases}
\end{equation}
Note that in the non-characteristic case ({\it i.e.} when $A_\nu$ is non-singular), conditions \eqref{eq:alg_cond} imply those  \eqref{BC2}  introduced by Friedrichs (see \cite{MDS} for a detailed discussion). Thus, multiplying inequality \eqref{1652} by a test function $\varphi \in \mathcal C^\infty_c(\R^n \times (-\infty,T))$ with $\varphi \geq 0$, and integrating by parts, we get that
\begin{multline}\label{Fried2}
 \int_0^T\int_\O |U-\kappa|^2\dot \varphi\dx\dt + \sum_{i=1}^2\int_0^T \int_\O A_i (U-\kappa) \cdot (U-\kappa)\partial_{x_i}\varphi \dx\dt 
+ \int_\O |U_0-\kappa|^2\varphi(0)\dx\\+2 \int_0^T\int_\O  F \cdot (U-\kappa)\varphi \dx\dt- \int_0^T \int_{\partial \O} A_\nu (U-\kappa) \cdot (U-\kappa)\partial_{x_i}\varphi \dx\dt \geq 0.
\end{multline}
According to  \cite[Lemma 1]{MDS}, we have that
$$\R^{3} = \Ker A_\nu  \oplus   \big(\Ker(A_{\nu}- {M})\cap \im A_{\nu} \big)   \oplus  \big( \Ker(A_{\nu}+ {M}) \cap \im A_{\nu} \big).$$
For each $\kappa\in \R^3$, we denote by $\kappa^\pm$ the projection of $\kappa$ onto $\Ker(A_{\nu}\pm {M})\cap \im A_{\nu} $. Using the (strong) boundary condition \eqref{friedrichs_BC}, we have that $U \in \Ker(A_\nu-M)$, or still $U^+=0$. The algebraic conditions \eqref{eq:alg_cond} together with \cite[Lemma 1]{MDS} thus yield
\begin{multline*}
A_\nu (U-\kappa)\cdot  (U-\kappa) =-M (U-\kappa)^+\cdot  (U-\kappa)^+ +M (U-\kappa)^-\cdot  (U-\kappa)^-\\
=-M\kappa^+\cdot \kappa^+ +M (U-\kappa)^-\cdot  (U-\kappa)^- \geq -M\kappa^+\cdot \kappa^+.
\end{multline*}
Inserting in \eqref{Fried2}, we get that for all constant vector $\kappa \in K$ and all $\varphi \in W^{1,\infty}(\Omega \times (0,T))$ with $\varphi \geq 0$,
\begin{multline}\label{FS}
 \int_{0}^T \int_{\Omega}  \left|U-\kappa\right|^2\dot \varphi\dxdt + \sum_{i=1}^2 \int_{0}^T \int_{\Omega} A_i(U-\kappa)\cdot (U-\kappa)\partial_{x_i}\varphi\dxdt \\
 + \int_\Omega \left|U_0-\kappa\right|^2 \varphi(0)\dx + 2\int_{0}^T \int_{\Omega} F\cdot (U-\kappa)\varphi\dxdt +\int_0^T\int_{\partial \Omega} M \kappa^{+}\cdot \kappa^{+} \varphi \dd \mathcal H^1 \dt \ge 0.
\end{multline}
The previous family of inequalities defines a notion dissipative solutions $U \in L^2(\O \times (0,T);K)$ to the dynamical elasto-plastic problem. Note that it is meaningfull within a $L^2$ theory of Friedrichs' systems (as suggested by \eqref{FS}) since the trace of $U$ on the boundary $\partial \O \times (0,T)$, which is not well defined, is not involved in this definition (see also \cite{O,MNRR} for an $L^\infty$-theory of initial/boundary value conservation laws).

\subsection{Derivation of the boundary conditions}

The well-posedness of this kind of dissipative formulations in the full space $\O=\R^2$ has been established in \cite{DLS}. On the other hand, among the results of \cite{MDS}, it is shown the existence and uniqueness of a solution to this problem in the unconstrained case ($K=\R^3$), and the dissipative boundary condition \eqref{friedrichs_BC} is proved to be satisfied in a suitable weak sense. 

In order to formulate more precisely the admissible boundary conditions in our particular situation, we need to characterize all boundary matrices satisfying the required algebraic conditions \eqref{eq:alg_cond}.

\begin{lemma}\label{lem:BC}
Assume that $A_1$ and $A_2$ are given by  \eqref{eq:Ai} and $\nu \in \R^2$ satisfies $|\nu|=1$. The following assertions are equivalent:
\begin{enumerate}
\item A matrix $M \in \mathbb M^{3 \times 3}$ satisfies \eqref{eq:alg_cond};
\item There exists $\lambda \in (0,+\infty)$ such that 
\begin{equation}\label{eq:M}
M=
\begin{pmatrix}
\lambda^{-1} & 0 & 0\\
0 & \lambda \nu_1^2 & \lambda \nu_1\nu_2\\
0 & \lambda \nu_1\nu_2 & \lambda \nu_2^2
\end{pmatrix}.
\end{equation}
\end{enumerate}
\end{lemma}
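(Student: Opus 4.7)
The strategy is to diagonalize $A_\nu$ in a well-chosen orthonormal basis of $\R^3$, translate the algebraic conditions on $M$ into that basis where they become scalar relations, and then convert back.

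First I would compute the spectrum of $A_\nu=\nu_1 A_1+\nu_2 A_2$. Since $|\nu|=1$, a direct computation yields eigenvalues $\{-1,0,+1\}$ with associated orthonormal eigenvectors
\[
u_1=(1,0,0),\qquad u_2=(0,\nu_1,\nu_2),\qquad u_3=(0,-\nu_2,\nu_1),
\]
after symmetrizing within each eigenspace. Note that $u_3$ spans $\Ker A_\nu$, while $\{u_1,u_2\}$ spans $\im A_\nu$; in the basis $(u_1,u_2,u_3)$ the matrix $A_\nu$ takes the simple form
\[
\tilde A_\nu=\begin{pmatrix} 0&-1&0\\ -1&0&0\\ 0&0&0\end{pmatrix}.
\]

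Next, I would write $M$ in this new basis. The conditions $M=M^T$ and $\Ker A_\nu\subset\Ker M$ (which, via symmetry, is equivalent to $\im M\subset\im A_\nu$) force $M$ to have the block form
\[
\tilde M=\begin{pmatrix} a&b&0\\ b&c&0\\ 0&0&0\end{pmatrix},
\qquad a,c\in\R,\ b\in\R,
\]
while non-negativity becomes $a\ge 0$, $c\ge 0$, $ac\ge b^2$. It remains to impose $\R^3=\Ker(A_\nu-M)+\Ker(A_\nu+M)$. Since both $\tilde A_\nu\pm\tilde M$ annihilate the third basis vector $u_3$, the problem reduces to the $2\times 2$ upper blocks $A'\pm M'$ with $A'=\begin{pmatrix}0&-1\\-1&0\end{pmatrix}$ and $M'=\begin{pmatrix}a&b\\b&c\end{pmatrix}$, and we need
\[
\Ker(A'-M')+\Ker(A'+M')=\R^2.
\]

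This is the decisive step. If $A'-M'=0$ or $A'+M'=0$, then $M'=\pm A'$ would have eigenvalues $\pm 1$, contradicting $M'\ge 0$. Hence both kernels are at most one-dimensional, and the sum condition forces both to be exactly one-dimensional, so
\[
\det(A'-M')=ac-(1+b)^2=0,\qquad \det(A'+M')=ac-(1-b)^2=0.
\]
Subtracting yields $b=0$, and then $ac=1$; in particular $a,c>0$, and setting $\lambda:=1/a=c>0$ gives $\tilde M=\mathrm{diag}(\lambda^{-1},\lambda,0)$. One must also check that $\Ker(A'-M')$ and $\Ker(A'+M')$ are genuinely distinct lines, which is immediate since the two matrices have different images of $(1,0)$.

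Finally, I would convert back to the canonical basis using the orthogonal change-of-basis matrix $P=[u_1\,|\,u_2\,|\,u_3]$: the identity $M=P\,\tilde M\,P^T$ produces exactly the matrix displayed in \eqref{eq:M}. The converse implication (that any $M$ of the form \eqref{eq:M} satisfies \eqref{eq:alg_cond}) is then a direct verification, since by construction $\tilde M=\mathrm{diag}(\lambda^{-1},\lambda,0)\ge 0$, $u_3\in\Ker\tilde M$, and the two kernels of $\tilde A_\nu\pm\tilde M$ are spanned by $(1,\mp 1,0)$ and $u_3$, whose span is all of $\R^3$. The only mildly delicate point in the whole argument is ruling out the degenerate cases $A'\pm M'=0$ via non-negativity of $M$; everything else is bookkeeping in the adapted basis.
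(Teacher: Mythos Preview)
Your proof is correct. One terminological wrinkle: $u_1=(1,0,0)$ and $u_2=(0,\nu_1,\nu_2)$ are not eigenvectors of $A_\nu$ (the eigenvectors for $\pm 1$ are $\frac{1}{\sqrt 2}(u_1\mp u_2)$); they are simply an orthonormal basis of $\im A_\nu$ in which $A_\nu$ takes the off-diagonal form $\tilde A_\nu$ you wrote. Since your subsequent computations use only that block form, the argument is unaffected.

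Your route differs from the paper's. The paper stays in the canonical basis, writes $M$ with six unknown entries, and uses $\Ker A_\nu\subset\Ker M$ to force each row of $M$ to be a multiple of $(0,\nu_1,\nu_2)$ or proportional to $(1,0,0)$; non-negativity and a dimension count on $\Ker(A_\nu\pm M)$ then yield two scalar relations $d_1\lambda=(\mu_1\pm 1)^2$, whence $\mu_1=0$ and $d_1\lambda=1$. You instead change to the orthonormal basis $(u_1,u_2,u_3)$ adapted to the splitting $\im A_\nu\oplus\Ker A_\nu$, which immediately reduces everything to a $2\times 2$ problem and turns the kernel-sum condition into the pair of determinant equations $ac=(1\pm b)^2$. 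Your approach is more structural and, as the paper's own remark after the lemma hints, generalizes cleanly to dimension $n\ge 2$; the paper's coordinate computation is more explicit but slightly longer.
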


\begin{proof}
It is immediate to check that any matrix $M$ of the form \eqref{eq:M} with $\lambda>0$ fullfills all conditions \eqref{eq:alg_cond}. Conversely, 
assume that
$$M= \begin{pmatrix}
d_1 & a & b  \\
a & d_2 & c  \\
b & c & d_3  \\
\end{pmatrix}  \in \mathbb M^{3 \times 3}_{\rm sym}$$
satisfies \eqref{eq:alg_cond}. Since
\[
A_\nu = \begin{pmatrix}
0 & -\nu_1 & -\nu_2 \\
-\nu_1 & 0 & 0  \\
-\nu_2 & 0 & 0 \\
\end{pmatrix},
\]
we get that
\[
\Ker A_\nu = \left\{ (v,\sigma) = (v,\sigma_1,\sigma_2) \in \R^3: v=0 \text{ and } \sigma\cdot \nu = \sigma_1 \nu_1 + \sigma_2 \nu_2 =0 \right\}.
\]
Denoting by $\nu^\perp:=\left\{ \sigma \in \R^2: \sigma\cdot \nu=0 \right\}$ the linear space of dimension $1$ in $\R^2$, condition $\Ker A_\nu \subset \Ker M$ reads as
$$\begin{cases}
\nu^\perp \subset \left\{ \sigma\in \R^2: a\sigma_1 + b\sigma_2  =0 \right\} &=: E_1,\\
\nu^\perp \subset \left\{\sigma\in \R^2: d_2\sigma_1 + c\sigma_2  =0 \right\}&=: E_2,\\
\nu^\perp \subset \left\{\sigma\in \R^2: c\sigma_1 + d_3\sigma_2  =0 \right\}&=: E_3.
\end{cases}$$
Consequently, we obtain that the dimension of the linear spaces $E_1$, $E_2$ and $E_3$ is larger than or equal to $1$. If $\dim E_1=1$, then $\nu$ is orthogonal to $E_1$, while if $\dim E_1 = 2$, then $a=b=0$. In both cases, one can find $\mu_1\in \R$  such that
$$(a,b)=\mu_1 \nu.$$
Arguing similarly for $E_2$ and $E_3$, there exist $\mu_2$ and $\mu_3 \in \R$ such that
$$(d_2,c) = \mu_2 \nu, \quad (c,d_3) = \mu_3 \nu,$$
so that
$$ M = 
\begin{pmatrix}
d_1 & \mu_1 \nu_1 & \mu_1\nu_2 \\
\mu_1\nu_1 & \mu_2 \nu_1 & \mu_2 \nu_2 \\
\mu_1\nu_2 & \mu_3 \nu_1 & \mu_3 \nu_2\\
\end{pmatrix}.
$$
Using that $M$ is symmetric, we must have $\mu_2 \nu_2  =  \mu_3 \nu_1$. Since $|\nu|=1$, then either $\nu_1 \neq 0$ or $\nu_2 \neq 0$. 
Suppose without loss of generality that $\nu_1\neq 0$, and define $\lambda = \mu_2/\nu_1$, then
\[
 M = \begin{pmatrix}
d_1 & \mu_1 \nu_1 & \mu_1\nu_2  \\
\mu_1\nu_1 & \lambda \nu_1^2 & \lambda \nu_1\nu_2  \\
\mu_1\nu_2 & \lambda \nu_1\nu_2 & \lambda \nu_2^2 \\
\end{pmatrix}.
\]
Using next that $M$ is non-negative, it follows that for all $(v,\sigma) \in \R^3$,
\[
M \begin{pmatrix}
v \\
\sigma
\end{pmatrix}\cdot \begin{pmatrix}
v \\
\sigma
\end{pmatrix} = d_1 v^2 + 2\mu_1 v \sigma \cdot \nu + \lambda \left(\sigma \cdot \nu \right)^2 \geq 0,
\]
which ensures that $d_1 \ge 0$, $\lambda \ge 0$. In fact, if $d_1 = \lambda = 0$, the previous expression can easily be made negative so that either $d_1>0$ or $\lambda>0$ (since the case $\mu_1=0$ is impossible). 

From the conditions  $\Ker A_\nu \subset \Ker M$ and $\dim \Ker A_\nu =1$, we obtain that $\dim \Ker(A_\nu \pm M) \ge 1$
and $\dim \big( \Ker(A_\nu - M) \cap \Ker(A_\nu +  M) \big) \ge 1$. 
The last condition $\R^3=\Ker(A_\nu- M) + \Ker(A_\nu+M)$ then implies  that $\dim \Ker(A_\nu \pm M) = 2$ (since $A_\nu$ is neither non-negative, nor non-positive). Computing
\[
A_\nu \pm M = \begin{pmatrix}
\pm d_1 & (\pm \mu_1 -1)\nu_1 & (\pm \mu_1 -1)\nu_2  \\
(\pm \mu_1 -1)\nu_1 & \pm \lambda \nu_1^2 & \pm \lambda \nu_1\nu_2  \\
(\pm \mu_1 -1)\nu_2 & \pm \lambda \nu_1\nu_2 & \pm \lambda \nu_2^2  \\
\end{pmatrix},
\]
we infer that
\[
\left(A_\nu - M\right) \begin{pmatrix}
v\\
\sigma
\end{pmatrix}
= 0\qquad \Longleftrightarrow \qquad \left\{
\begin{array}{rcl}
d_1 v + (\mu_1+1) \sigma\cdot \nu &=& 0,\\
(\mu_1+1) v + \lambda \sigma\cdot \nu &=& 0.
\end{array}
\right.
\]
Observe that, since $\dim \Ker(A_\nu - M) = 2$, we obtain that
\begin{eqnarray*}
d_1 v + (\mu_1+1) \sigma\cdot \nu = 0 & \Longleftrightarrow &  (\mu_1+1) v + \lambda \sigma\cdot \nu = 0  \\
& \Longleftrightarrow & d_1\lambda - (\mu_1+1)^2=0,
\end{eqnarray*}
and similarly, since $\dim \Ker(A_\nu + M) = 2$, 
\begin{eqnarray*}
d_1 v + (\mu_1-1) \sigma\cdot \nu = 0 & \Longleftrightarrow & (\mu_1-1) v + \lambda \sigma\cdot \nu = 0\\
& \Longleftrightarrow & d_1\lambda - (\mu_1-1)^2=0,
\end{eqnarray*}
which implies that $\mu_1 = 0$ and $d_1\lambda = 1$, hence $\lambda>0$. 
\end{proof}

\begin{remark}
A similar characterization result can be proved in any dimension $n\ge 2$, {\it i.e.}, when the matrices $M$ belong to $\mathbb M^{(n+1) \times (n+1)}$.
\end{remark}

As a consequence of Lemma \ref{lem:BC}, it follows that all admissible boundary conditions \eqref{friedrichs_BC} for the dissipative formulation are of the form
\begin{equation}\label{eq:lambdaBC}
\sigma\cdot \nu+\lambda^{-1}\dot u=0 \quad \text{ on }\partial \O \times (0,T),
\end{equation}
where $\lambda:\partial \O\to (0,+\infty)$. In the sequel, we will assume for simplicity that $\lambda>0$ is independent of the space variable.

\begin{remark}
Note that, strictly speaking, (homogeneous) Dirichlet and Neumann conditions are not contained within this framework. However, they can be recovered by means of an asymptotic analysis as $\lambda\to 0^+$ and $\lambda \to +\infty$, respectively (see Section \ref{sec:lambda}). 

Moreover, since $\lambda$ is actually a (Borel) function of the space variable, letting $\lambda  \to 0$ in some subset $\Gamma_D \subset \O$, and $\lambda \to +\infty$ on its complementary $\Gamma_N:=\partial \O \setminus \Gamma_D$ would lead to mixed boundary conditions of Dirichlet type on $\Gamma_D$ and Neumann type on $\Gamma_N$. This problem will not be addressed in the present work.
\end{remark}

\section{The dynamic elasto-visco-plastic model}
\label{sec:3}

\noindent In order to establish the existence and uniqueness of solution to \eqref{general_model} and \eqref{eq:lambdaBC}, we consider an elasto-visco-plastic approximation model which consists in regularizing the constitutive law by means of a Kelvin-Voigt type visco-elasticity, and the flow rule thanks to a Perzyna type visco-plasticity. Except for our choice of boundary conditions \eqref{eq:lambdaBC}, the model described below is very similar to that studied in \cite{DalMasoScala} (see also \cite{Suquet,BMora}).

This choice of regularization is motivated by the approximation employed in \cite{DLS,BMS} in order to show the well-posedness of constrained Friedrichs' systems in the whole space. It consists in penalizing the constraint (which is described by Perzyna visco-plasticity), and adding up a diffusive term (which corresponds to Kelvin-Voigt visco-elasticity).

Note also that since the space dimension does not really matter in the subsequent arguments, we perform the analysis in any space dimension.

\medskip

The main result of this section is the following existence and uniqueness result.
\begin{theorem}
\label{final_theorem_at_level_epsilon}
Let $\Omega$ be a bounded open set of $\R^n$ with Lipschitz boundary and $\lambda>0$. Consider a source term $f \in H^1([0,T];L^2(\Omega))$ and an initial data $(u_0,v_0,\sigma_0,p_0) \in H^1(\Omega) \times H^2(\Omega) \times H(\Div,\O)  \times L^2(\Omega;\R^n) $ such that
\[
\begin{cases}
\nabla u_0 = \sigma_0 + p_0 \text{ in } L^2(\O;\R^n),\\
\sigma_0\cdot \nu +\lambda^{-1} v_0 = 0 \quad\HH^{n-1}\text{-a.e. on } \partial \O,\\
|\sigma_0| \leq 1 \text{ a.e. in } \Omega.
\end{cases}
\]
For each $\e>0$, we define $g_\e:=\e\nabla v_0\cdot \nu \in L^2(\partial \O)$. Then, there exist a unique triple $(u_\e,\sigma_\e,p_\e)$ with the regularity
$$
\begin{cases}
u_\e\in W^{2,\infty}([0,T];L^2(\Omega)) \cap H^2([0,T];H^1(\Omega)),\\
\sigma_\e \in W^{1,\infty}([0,T];L^2(\Omega;\R^n)),\\
p_\e \in  H^1([0,T];L^2(\Omega;\R^n)),
\end{cases}$$
which satisfies the following properties:

\begin{enumerate}
\item The initial conditions:
\[
u_\e(0)   =  u_0, \quad \dot  u_\e(0) = v_0, \quad 
\sigma_\e(0)   =  \sigma_0, \quad p_\e(0)   =  p_0;
\]
\item The additive decomposition: for all $t \in [0,T]$
\begin{equation}\label{decomposition_nablau_eps}
\nabla u_\e(t) = \sigma_\e(t) + p_\e(t)\quad \text{in }L^2(\O;\R^n);
\end{equation}
\item The equation of motion: 
\[\ddot u_\e - \Div(\sigma_\e+\e \nabla \dot u_\e) = f\quad  \text{ in } L^2(0,T;L^2(\Omega));\]
\item The dissipative boundary condition:
\begin{equation}\label{eq:bdry_cond_eps}
(\sigma_\e + \e \nabla \dot u_\e)\cdot\nu+\lambda^{-1}\dot u_\e = g_\e\quad \text{ in } L^2(0,T;L^2(\partial \Omega));
\end{equation}
\item The visco-plastic flow rule:
\begin{equation}\label{snd_eq_continuous}
\dot p_\e = \frac{\sigma_\e-P_B(\sigma_\e)}{\e}\quad \text{in }L^2(0,T;L^2(\O;\R^n)).
\end{equation}
\end{enumerate}
In addition, we have the following energy balance: for all $t\in [0,T]$,
\begin{multline}\label{eq:energy-balance}
\frac12\| \dot u_\e(t) \|^2_2 + \frac12\| \sigma_\e(t) \|^2_2+\e\left( \int_0^t \int_\Omega \left|\nabla \dot u_\e \right|^2\dd x \dd s + \int_0^t \int_\Omega \left| \dot p_\e \right|^2 \dd x \dd s\right)  + \frac{1}{\lambda} \int_0^t \int_{\partial \Omega} \left| \dot u_\e \right|^2\dd \mathcal{H}^{n-1}ds\\
+ \int_0^t \int_\Omega |\dot p_\e|\dd x \dd s =\frac12\|v_0\|_2^2 + \frac12\| \sigma_0 \|^2_2 + \int_0^t \int_\Omega f \dot u_\e\dd x \dd s+\int_0^t \int_{\partial \O} g_\e \dot u_\e\dd\HH^{n-1}\dd s,
\end{multline}
and the estimate
\begin{multline}\label{eq:apost_est}
\sup_{t\in [0,T]} \left\| \ddot u_\e(t) \right\|^2_2 + \sup_{t\in [0,T]} \left\| \dot \sigma_\e(t) \right\|^2_2+\e \int_0^T \left\| \nabla\ddot u_\e(t) \right\|^2_2\dt + \frac{1}{\lambda}\int_0^T \left\| \ddot u_\e(t) \right\|^2_{L^2(\partial \Omega)} \dt \\
\le  C\bigg(\|\Div(\sigma_0+\e\nabla v_0)+f(0)\|_2^2+  \| \nabla v_0 \|^2_2 + \Big(\int_0^T \| \dot f(t) \|_2\dt\Big)^2 \bigg),
\end{multline}
for some constant $C>0$ independent of $\e$ and $\lambda$.
\end{theorem}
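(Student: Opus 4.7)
Using the additive decomposition $\sigma_\e = \nabla u_\e - p_\e$, I would recast the problem as a coupled system for $(u_\e, p_\e)$: a damped wave equation for $u_\e$ with the Robin-type condition \eqref{eq:bdry_cond_eps}, coupled to the $L^2$-valued ODE \eqref{snd_eq_continuous} for $p_\e$ driven by $\sigma_\e$. Integration by parts against the equation of motion, together with the boundary condition, yields the natural weak formulation: for every $\varphi \in H^1(\Omega)$,
$$\int_\Omega \ddot u_\e \varphi\dx + \int_\Omega (\sigma_\e+\e\nabla \dot u_\e)\cdot \nabla \varphi\dx + \frac{1}{\lambda} \int_{\partial \Omega}\dot u_\e \varphi\dd\HH^{n-1} = \int_\Omega f \varphi\dx + \int_{\partial\Omega} g_\e \varphi\dd\HH^{n-1}.$$
Existence will follow from a Galerkin approximation, the energy balance \eqref{eq:energy-balance} by testing with $\dot u_\e$, the a posteriori estimate \eqref{eq:apost_est} by differentiating in time and testing with $\ddot u_\e$, and uniqueness from the Lipschitz property of $P_B$.

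\textbf{Galerkin approximation and energy balance.} I would fix a Hilbert basis $(w_k)_{k\in\N}$ of $L^2(\Omega)$ consisting of smooth $H^1(\Omega)$-functions and set $V_N = \mathrm{span}(w_1,\ldots,w_N)$. Seek $u_N \in \mathcal{C}^1([0,T_N]; V_N)$ and $p_N \in \mathcal{C}^1([0,T_N]; L^2(\Omega;\R^n))$ satisfying the weak equation against every test function in $V_N$, the flow rule \eqref{snd_eq_continuous} for $p_N$, and the projected initial data. Since $P_B$ is globally $1$-Lipschitz, the Cauchy-Lipschitz theorem applied in the finite-dimensional $V_N$-coordinates coupled with an $L^2$-valued ODE for $p_N$ supplies a unique local solution. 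Testing the approximate equation with $\dot u_N \in V_N$, using the decomposition $\nabla \dot u_N = \dot\sigma_N + \dot p_N$, and invoking the convex-conjugate identity $\sigma_N \cdot \dot p_N = |\dot p_N| + \e |\dot p_N|^2$ recalled in Section~\ref{sec:2} produces the energy balance \eqref{eq:energy-balance} for $(u_N, \sigma_N, p_N)$. The uniform bounds thereby obtained extend the solution to $[0,T]$ and provide the weak-$*$ compactness needed to pass to the limit.

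\textbf{Higher regularity, limit, and uniqueness.} For \eqref{eq:apost_est}, I would differentiate the Galerkin equation in time and test with $\ddot u_N$. The initial value $\ddot u_N(0)$ is obtained by evaluating the equation at $t=0$ as the $L^2$-projection of $\Div(\sigma_0 + \e\nabla v_0)+f(0)$ onto $V_N$; this accounts for the first right-hand term in \eqref{eq:apost_est} and is legitimate since the compatibility hypotheses on the initial data ensure $\sigma_0 + \e\nabla v_0 \in H(\Div,\Omega)$. Differentiating the flow rule and using the monotonicity of $\sigma \mapsto \sigma - P_B(\sigma)$ (so that, in the differentiated energy identity, the term $\partial_t(\sigma_N - P_B(\sigma_N))\cdot \dot\sigma_N$ is nonnegative almost everywhere by the $1$-Lipschitz property of $P_B$) yields the missing dissipation terms, and a Gronwall argument produces \eqref{eq:apost_est}. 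Passing $N \to \infty$ combines these uniform estimates with an Aubin-Lions argument to obtain strong $L^2$-convergence of $\sigma_N$, which identifies $P_B(\sigma_N) \to P_B(\sigma_\e)$. Uniqueness is handled analogously: the difference of two solutions satisfies a linearized system, and testing with the velocity difference together with the Lipschitz property of $P_B$ closes by Gronwall. The principal technical obstacle is \eqref{eq:apost_est}, where one must simultaneously justify the time-differentiation of the nonsmooth projection $P_B(\sigma_\e)$ and assign a precise meaning to $\ddot u_\e(0)$ through the compatibility conditions.
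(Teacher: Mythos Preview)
Your Galerkin route is sound, but it is not the approach the paper takes. The paper (which actually omits the full proof and refers to \cite{DalMasoScala,BMora}) outlines a \emph{time-discretization / minimizing-movement} scheme: at each time step one solves a convex minimization problem, builds piecewise-affine interpolants, derives the a priori estimates, and passes to the limit as the time step goes to zero. The equation of motion and the boundary condition are first obtained only in weak form; the strong form together with \eqref{eq:apost_est} comes from a posteriori estimates obtained by taking discrete second differences.

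The main practical difference is how the non-smoothness of $P_B$ is handled. In the minimizing-movement scheme, the Perzyna term enters through the Euler--Lagrange equation of a smooth convex functional, so one never needs to differentiate $P_B$; the second-order estimate \eqref{eq:apost_est} is obtained by comparing three consecutive time steps and exploiting only the \emph{monotonicity} of $\sigma\mapsto\sigma-P_B(\sigma)$. Your Galerkin scheme instead relies on the $1$-Lipschitz property of $P_B$ twice: once to apply Cauchy--Lipschitz in the Banach space $V_N\times V_N\times L^2(\Omega;\R^n)$, and once (via difference quotients in time) to control $\dot p_N\cdot\dot\sigma_N\ge 0$ in the differentiated energy identity. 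Both devices are legitimate. A genuine advantage of your approach is that the energy balance \eqref{eq:energy-balance} is an \emph{equality} already at the approximate level; in the time-discrete scheme one typically first gets an inequality and must recover equality a posteriori. Conversely, the time-discrete route avoids the delicate identification of $\ddot u_N(0)$: your bound $\|\ddot u_N(0)\|_2\le\|\Div(\sigma_0+\e\nabla v_0)+f(0)\|_2$ is only literally true if the Galerkin projections of $(u_0,v_0)$ respect the boundary compatibility $\sigma_0\cdot\nu+\lambda^{-1}v_0=0$ and $g_\e=\e\nabla v_0\cdot\nu$, which in practice forces you to choose the basis $(w_k)$ as eigenfunctions of the Robin Laplacian (or to add a vanishing corrector). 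This is fixable but deserves an explicit sentence.
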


The proof of Theorem~\ref{final_theorem_at_level_epsilon} is standard and follows the lines of {\it e.g.} \cite{DalMasoScala} for the existence and uniqueness in the energy space, and of \cite{BMora} for the additional regularity results \eqref{eq:apost_est}. We will not present the proof of that result, whose arguments can be easily reconstructed by the reader from the above mentioned references. The main difference with \cite{BMora,DalMasoScala} is concerned with the boundary condition. However, as long as $\e>0$, since the analysis takes place in Sobolev type spaces, this difference will not really matter.

\medskip

The proof of existence relies on a time discretization procedure. At each time step, we solve a minimization problem. After defining suitable interpolants, we derive some standard {\it a priori} estimates that allow one to obtain some weak convergences in the energy space as the time discretization parameter goes to zero. It enables one to establish the additive decomposition as well as the initial conditions for the displacement, the stress and the plastic strain. At this stage, the equation of motion and the boundary condition are just formulated in a weak sense. Nonetheless, it allows one to get the initial condition for the velocity using results on Banach-valued Sobolev spaces. Indeed, since $\dot u_\e \in L^2(0,T;H^1(\O))$ and $\ddot u_\e \in L^2(0,T;[H^1(\O)]')$, it follows from \cite[Theorem 1.19]{Barbu} that $\dot u_\e \in \mathcal C^0([0,T];L^2(\O))$. In order to get the flow rule, we derive a strong convergence result. Finally, we obtain, thanks to suitable {\it a posteriori} estimates, the equation of motion in a strong sense as well as the boundary condition. Note that the introduction of the term $g_\e$ allows one to get rid off undesirable boundary terms in the proof of estimate \eqref{eq:apost_est}.

\begin{remark}\label{rem:wf}
The equation of motion implies, for all $\varphi \in L^2(0,T;H^1(\O))$,
\begin{multline}
\label{eq_motion_eps_equal_zero}
\int_0^T \int_\Omega\ddot u_\e \varphi\dxdt + \int_0^T \int_\Omega \left(\sigma_\e+\e \nabla\dot  u_\e\right) \cdot \nabla \varphi\dxdt
 + \frac{1}{\lambda} \int_0^T \int_{\partial \Omega} \dot u_\e \varphi\dd \mathcal{H}^{n-1} \dt \\= \int_0^T \int_\Omega f\varphi\dxdt+\int_0^T \int_{\partial \Omega} g_\e\varphi\dd\HH^{n-1}\dt.
\end{multline}
\end{remark}

\section{The dynamic elasto-plastic model}
\label{sec:4}

The object of this section is to show the well-posedness of the model \eqref{general_model} and \eqref{eq:lambdaBC} by letting the viscosity parameter $\e$ tend to zero. It turns out that a relaxation phenomenon occurs, and it leads to a modification of the boundary condition which has to accomodate to the stress constraint. Indeed, since it is expected that $|\sigma|\leq 1$, the boundary condition $\sigma\cdot \nu+\lambda^{-1}\dot u=0$ can only be satisfied at the points of the boundary where $|\dot u|\leq \lambda$, while on the part of the boundary where $|\dot u|>\lambda$, the velocity has to be truncated by the values $\pm \lambda$.
This phenomena is easily explained by looking at the energy balance \eqref{eq:energy-balance}. In order to pass to the limit in this equality, we must (at least) ensure the  sequential lower semicontinuity of the mapping 
$$(u,\sigma) \mapsto \int_\O |\nabla u-\sigma| \dx +\frac{1}{2\lambda} \int_{\partial \O} |u|^2\dd \HH^{n-1}$$
with respect to a reasonable topology provided by the energy estimates. Unfortunately, this property fails according to the following result (see also \cite{MazonRossiSegura,Modica}).

\begin{proposition}\label{relax}
Let $\O \subset \R^n$ be a bounded open set with $\mathcal C^1$ boundary. Let us define the functional
$F:W^{1,1}(\O)\times L^2(\O;\R^n) \to [0,+\infty]$ by 
$$F(u,\sigma)=\int_\O |\nabla u-\sigma| \dx +\frac{1}{2\lambda} \int_{\partial \O} |u|^2\dd \HH^{n-1},$$
with the convention that $F(u,\sigma)=+\infty$ if $u \not\in L^2(\partial \O)$. Then, the lower semicontinuous enveloppe of $F$ with respect to the weak* convergence in $BV(\O)$ and the strong convergence in $L^2(\O;\R^n)$ is given by
$$\overline F(u,\sigma)=|Du-\sigma|(\O)+\int_{\partial \O} \psi_\lambda(u) \dd \HH^{n-1},$$
where  $\psi_\lambda : \R\to [0,+\infty)$ is defined by
\begin{equation}\label{eq:psi}
\psi_\lambda(z) = 
\begin{cases}
\frac{z^2}{2\lambda} &\text{if } |z|\le \lambda,\\
|z| - \frac{\lambda }{2} &\text{if } |z|\ge \lambda .
\end{cases}
\end{equation}
\end{proposition}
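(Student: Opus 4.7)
The plan is to prove the two defining inequalities of the relaxed functional separately: the $\liminf$ inequality $\overline F(u,\sigma)\le\liminf_j F(u_j,\sigma_j)$ along every sequence $u_j\wto u$ weak* in $BV(\O)$ and $\sigma_j\to\sigma$ strongly in $L^2(\O;\R^n)$, and the existence of a recovery sequence saturating it. The structural reason for the formula is that the bulk integrand $|\,\cdot\,|$ is positively $1$-homogeneous (its own recession function) while the boundary integrand $z^2/(2\lambda)$ has linear recession $|z|$; inf-convolving them one gets exactly $\psi_\lambda(z)=\inf_{t\in\R}\{|z-t|+t^2/(2\lambda)\}$, whose unique minimizer is $t=T_\lambda(z)$.

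For the $\liminf$ inequality, I would start from the dual representation of $|\,\cdot\,|$: for every test field $\phi\in C^1(\overline\O;\R^n)$ with $|\phi|\le 1$, the pointwise bound $|\nabla u_j-\sigma_j|\ge\phi\cdot(\nabla u_j-\sigma_j)$ combined with integration by parts gives
\[
F(u_j,\sigma_j)\ge -\int_\O u_j\Div\phi\dx -\int_\O\phi\cdot\sigma_j\dx +\int_{\partial\O}\Bigl[u_j(\phi\cdot\nu)+\frac{u_j^2}{2\lambda}\Bigr]d\HH^{n-1}.
\]
The crucial step is to eliminate the boundary trace of $u_j$, which weak* BV convergence does not control: completing the square yields $u_j(\phi\cdot\nu)+u_j^2/(2\lambda)\ge -\lambda(\phi\cdot\nu)^2/2$. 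Passing to the limit using $u_j\to u$ in $L^1(\O)$ and $\sigma_j\to\sigma$ in $L^2$, and then applying the Green formula for $BV$ functions to rewrite $-\int_\O u\Div\phi\dx=\int_\O\phi\cdot dDu-\int_{\partial\O}u(\phi\cdot\nu)d\HH^{n-1}$, I arrive at
\[
\liminf_j F(u_j,\sigma_j)\ge \int_\O\phi\cdot d(Du-\sigma\LL^n) -\int_{\partial\O}\Bigl[u(\phi\cdot\nu)+\frac{\lambda(\phi\cdot\nu)^2}{2}\Bigr]d\HH^{n-1}.
\]
I then take the supremum over $\phi$ by decomposing $\phi=\phi_0+\phi_1$ with disjoint supports (so that $|\phi|\le 1$ is preserved): $\phi_0\in C^1_c(\O;\R^n)$ concentrated in $\{d(\cdot,\partial\O)>2\delta\}$ and chosen to realize $|Du-\sigma|(\O)$ up to $\e$ via the variational characterization of the total variation; and $\phi_1$ supported in $\{d(\cdot,\partial\O)<\delta\}$ with smooth normal trace approximating the pointwise minimizer $-T_\lambda(u)/\lambda$ of $\alpha\mapsto u\alpha+\lambda\alpha^2/2$ on $\{|\alpha|\le 1\}$ (admissible because $|T_\lambda(u)|\le\lambda$, with optimal value $-\psi_\lambda(u)$). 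As $\delta\to 0$ the interior contribution of $\phi_1$ vanishes thanks to $|Du-\sigma|(\{d<\delta\})\to 0$ by continuity of measures from above, and a Lusin-type density of $C^1$ normal traces in bounded measurable functions, extended inward via a smooth extension of $\nu$ and a $\delta$-cutoff, delivers the boundary contribution $\int_{\partial\O}\psi_\lambda(u)d\HH^{n-1}$.

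For the recovery sequence, I first invoke Proposition~\ref{prop:MS} to obtain $u_n\in W^{1,1}(\O)$ with $u_n\wto u$ weak* in $BV$, $u_n=u$ on $\partial\O$, and $\int_\O|\nabla u_n-\sigma|\dx\to|Du-\sigma|(\O)$. To push the boundary trace from $u$ down to the bounded function $T_\lambda(u)$, I apply Gagliardo's extension \eqref{eq:lift} to the datum $T_\lambda(u)-u\in L^1(\partial\O)$, obtaining for each $\e>0$ a function $w_\e\in W^{1,1}(\O)$ with $w_\e=T_\lambda(u)-u$ on $\partial\O$, $\|w_\e\|_1\le\e\|T_\lambda(u)-u\|_{L^1(\partial\O)}$ and $\|\nabla w_\e\|_1\le(1+\e)\|T_\lambda(u)-u\|_{L^1(\partial\O)}$. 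Setting $v_n^\e:=u_n+w_\e$, the trace of $v_n^\e$ is $T_\lambda(u)\in L^\infty(\partial\O)$, and the triangle inequality combined with the pointwise identity $|T_\lambda(u)-u|+T_\lambda(u)^2/(2\lambda)=\psi_\lambda(u)$ yields
\[
\limsup_n F(v_n^\e,\sigma) \le |Du-\sigma|(\O) + \int_{\partial\O}\psi_\lambda(u)d\HH^{n-1} + \e\|T_\lambda(u)-u\|_{L^1(\partial\O)}.
\]
A diagonal extraction $\e_n\downarrow 0$ produces a single sequence $v_n^{\e_n}$ converging to $u$ weak* in $BV(\O)$ (the $W^{1,1}$-norm of $w_{\e_n}$ remains bounded while $\|w_{\e_n}\|_1\to 0$) and saturating the relaxed value. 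The main technical obstacle is the liminf step, where the bulk and boundary suprema must be realized simultaneously by a single field $\phi$ satisfying the pointwise constraint $|\phi|\le 1$; the disjoint-support splitting and the absolute continuity of $|Du-\sigma|$ on thin boundary strips reconcile these two optimizations.
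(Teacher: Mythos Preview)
Your upper bound is essentially identical to the paper's: both use Proposition~\ref{prop:MS} to approximate $u$ by $W^{1,1}$ functions preserving the trace, then correct the trace to $T_\lambda(u)$ via Gagliardo's extension \eqref{eq:lift}, and conclude from the identity $|u-T_\lambda(u)|+T_\lambda(u)^2/(2\lambda)=\psi_\lambda(u)$.

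Your lower bound, however, takes a genuinely different route. The paper first observes that $\psi_\lambda\le|\cdot|^2/(2\lambda)$, so it suffices to prove that $\overline F$ itself is sequentially lower semicontinuous; this is done by a cut-off argument near $\partial\O$ combined with the quantitative trace inequality \eqref{eq:traceBV} and the $1$-Lipschitz character of $\psi_\lambda$, which together control $\int_{\partial\O}|\psi_\lambda(u)-\psi_\lambda(u_k)|\,d\HH^{n-1}$ by quantities that vanish as the cut-off shrinks. Your argument instead proceeds by duality: you linearize $|\nabla u_j-\sigma_j|$ against a test field $\phi$ with $|\phi|\le 1$, complete the square on the boundary to discard the unstable trace of $u_j$, pass to the limit, and then optimize in $\phi$ via a disjoint-support splitting into an interior piece (recovering $|Du-\sigma|(\O)$) and a boundary-layer piece with prescribed normal trace (recovering $\int_{\partial\O}\psi_\lambda(u)\,d\HH^{n-1}$). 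This approach is correct and has the conceptual advantage of making transparent why $\psi_\lambda$ arises as the inf-convolution of $|\cdot|$ and $|\cdot|^2/(2\lambda)$; the paper's argument is more direct but leans on the specific trace estimate. The one place where you should add a line of justification is the construction of $\phi_1$: given a measurable $\alpha\in L^\infty(\partial\O)$ with $|\alpha|\le 1$ approximating $-T_\lambda(u)/\lambda$, you need a $C^1$ approximation $\alpha_k$ still satisfying $|\alpha_k|\le 1$ and a $C^1$ extension $\phi_1$ of $\alpha_k\nu$ with $|\phi_1|\le 1$ supported in $\{d(\cdot,\partial\O)<\delta\}$; this is routine under the $\mathcal C^1$ regularity of $\partial\O$ but deserves to be stated explicitly.
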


\begin{proof}
Let us fix $(u,\sigma) \in BV(\O) \times L^2(\O;\R^n)$.

\medskip

\noindent {\bf Step 1: Lower bound.} We must show that for every sequences $(u_k) \subset W^{1,1}(\O)$ and $(\sigma_k) \subset L^2(\O;\R^n)$ with $u_k \wto u$ weakly* in $BV(\O)$ and $\sigma_k \to \sigma$ strongly in $L^2(\O;\R^n)$, then
$$\overline F(u,\sigma) \leq \liminf_{k \to \infty} F(u_k,\sigma_k).$$

\noindent Since $\psi_\lambda \leq |\cdot|^2/(2\lambda)$, we first observe that
\begin{equation}\label{eq:ineq1}
\liminf_{k \to \infty} F(u_k,\sigma_k) \geq  \liminf_{k \to \infty} \overline F(u_k,\sigma_k).
\end{equation}
Possibly extracting a subsequence, we can assume without loss of generality that 
$$\liminf_{k \to \infty} \overline F(u_k,\sigma_k)=\lim_{k \to \infty} \overline F(u_k,\sigma_k)<+\infty,$$ 
and, up to another subsequence, we can also suppose that $|Du_k-\sigma_k| \wto \mu$ weakly* in $\mathcal M(\O)$ for some non-negative measure $\mu \in \mathcal M(\O)$.

The argument presented below is very close to that of \cite[Proposition 1.2]{Modica}. Let $\delta>0$, and $\zeta_\delta \in \mathcal C^\infty_c(\O;[0,1])$  be a cut-off function such that $\zeta_\delta=1$ on $A_\delta:=\{x \in \O:\dist(x,\partial \O)\geq \delta\}$ and $|\nabla \zeta_\delta|\leq 2/\delta$ in $\O$. We consider the function $w_{\delta,k}:=(1-\zeta_\delta)(u-u_k) \in BV(\O)$ which satisfies $w_{\delta,k}=u-u_k$  in a neighborhood of $\partial \O$,  $w_{\delta,k}=0$ in $A_\delta$, and $Dw_{\delta,k}=-(u-u_k)\nabla \zeta_\delta+(1-\zeta_\delta)(Du-Du_k)$ in $\O$. According to the trace inequality \eqref{eq:traceBV}, we infer that
\begin{multline*}
\int_{\partial \O} |u-u_k|\dd \HH^{n-1} \leq (1+\e)|Dw_{\delta,k}|(\O) + c_\e \int_\O |w_{\delta,k}|\dx\\
\leq \left(\frac{2(1+\e) }{\delta}+c_\e\right) \int_{\O\setminus A_\delta} |u-u_k|\dx + (1+\e)|Du-Du_k|(\O \setminus A_\delta).
\end{multline*}
Since the function $\psi_\lambda$ is $1$-Lipschitz, we have that
$$\int_{\partial \O}|\psi_\lambda(u)-\psi_\lambda(u_k)|\dd\HH^{n-1}\leq \int_{\partial \O} |u-u_k|\dd \HH^{n-1},$$
while
$$|Du-Du_k|(\O \setminus A_\delta) \leq |Du-\sigma|(\O \setminus A_\delta)+\int_{\O \setminus A_\delta}|\nabla u_k-\sigma_k|\dx +\int_\O|\sigma_k-\sigma|\, dx.$$
As a consequence,
\begin{multline*}
\overline F(u,\sigma)-\overline F(u_k,\sigma_k) \leq  C_{\e,\delta}\int_{\O \setminus A_\delta}|u-u_k|\dx + (1+\e)\int_\O |\sigma-\sigma_k|\dx\\
+(1+\e)|Du-\sigma|(\O\setminus A_\delta) + (1+\e)|Du_k-\sigma_k|(\O\setminus A_\delta)+|Du-\sigma|(\O) -|Du_k-\sigma_k|(\O).
\end{multline*}
Choosing a sequence $(\delta_j)_{j \in \mathbb N}$ with $\delta_j \searrow 0^+$ and $\mu(\partial A_{\delta_j})=0$ for all $j \in \mathbb N$, we get that $|Du_k-\sigma_k|(\O \setminus A_{\delta_j}) \to |Du-\sigma|(\O\setminus A_{\delta_j})$ as $k \to \infty$, and thus
$$\overline F(u,\sigma)-\liminf_{k \to \infty} \overline F(u_k,\sigma_k) 
\leq 2(1+\e)|Du-\sigma |(\O\setminus A_{\delta_j}) .$$
Finally, since $A_{\delta_j}$ is increasing to $\O$ as $j \to \infty$, we get that 
$$\overline F(u,\sigma) \leq \liminf_{k \to \infty} \overline F(u_k,\sigma_k),$$
which together with \eqref{eq:ineq1} completes the proof of the lower bound.

\medskip

\noindent {\bf Step 2: Upper bound.} We show the existence of sequences $(u_k) \subset W^{1,1}(\O)$ and $(\sigma_k) \subset L^2(\O;\R^n)$ such that
$u_k \wto u$ weakly* in $BV(\O)$, $\sigma_k \to \sigma$ strongly in $L^2(\O;\R^n)$, and
$$\limsup_{k \to \infty}F(u_k,\sigma_k)  \leq \overline F(u,\sigma).$$

This proof follows the lines of  \cite[Lemma 2.1]{BFM}. Let us denote by $\theta=\max(-\lambda,\min(u,\lambda))$ the truncation of the trace of $u$ on $\partial \O$ by the values $\pm \lambda$. Then $\theta \in L^\infty(\partial \O)$, and using Gagliardo's extension theorem \eqref{eq:lift}, for each $k \in \mathbb N^*$, one can find a function $w_k\in W^{1,1}(\O)$ such that $w_k=\theta -u$ on $\partial \O$,
$$\int_\O |w_k|\dx \leq \frac{1}{k} \int_{\partial \O} |\theta-u|\dd\HH^{n-1},$$
and
$$\int_\O |\nabla w_k|\dx \leq \left(1+\frac{1}{k}\right) \int_{\partial \O} |\theta-u|\dd\HH^{n-1}.$$
Applying next Proposition \ref{prop:MS}, there exists a sequence $(z_k) \subset W^{1,1}(\O)$ such that $z_k \wto u$ weakly* in $BV(\O)$, $|Dz_k-\sigma|(\O) \to |Du-\sigma|(\O)$ and $z_k=u$ on $\partial \O$ for each $k$.  Setting $\sigma_k\equiv \sigma$ and $u_k:=w_k+z_k \in W^{1,1}(\O)$, then $u_k=\theta$ on $\partial\O$, $u_k \wto u$ weakly* in $BV(\O)$, and
$$\limsup_{k \to \infty} \int_\O |\nabla u_k-\sigma|\dx \leq |Du-\sigma|(\O) + \int_{\partial \O}|u-\theta|\dd \HH^{n-1}.$$
Consequently,
\begin{multline*}
\limsup_{k \to \infty}F(u_k,\sigma_k)\leq |Du-\sigma|(\O) + \int_{\partial \O}|u-\theta|\dd \HH^{n-1}+\frac{1}{2\lambda}\int_{\partial\O}|\theta|^2\dd\HH^{n-1}\\
= |Du-\sigma|(\O) + \int_{\partial \O\cap \{u \leq -\lambda\}}(-u-\lambda)\dd \HH^{n-1}+ \int_{\partial \O\cap \{u\geq \lambda\}}(u-\lambda)\dd \HH^{n-1}\\
+\frac{1}{2\lambda}\int_{\partial\O\cap \{|u|\leq \lambda\}}|u|^2\dd\HH^{n-1}+\frac{\lambda}{2}\HH^{n-1}(\{|u|>\lambda\})=\overline F(u,\sigma),
\end{multline*}
which concludes the proof of the upper bound.
\end{proof}

\begin{remark}
In the proof of the lower bound we strongly used the $\mathcal C^1$ regularity of the boundary. Indeed \cite[Remark 1.3]{Modica} shows that energy functionals of the form
$$BV(\O) \ni u \mapsto |Du|(\O)+\int_\O \psi(u)\dd\HH^{n-1},$$
with $\O \subset \R^n$ only Lipschitz, and $\psi:\R \to \R$ $1$-Lipschitz, might fail to be   sequentially weakly* lower semicontinuous in $BV(\O)$. On the other hand, the use of the $\mathcal C^1$ character of the boundary does not seem to be necessary in the proof of the upper bound. Indeed, as observed in \cite[Lemma 2.1]{BFM}, Gagliardo's extension result with estimates as in \eqref{eq:lift} holds for Lipschitz boundaries  as well.
\end{remark}

\begin{remark}\label{rem:psi}
Let us observe for future use that the function $\psi_\lambda$ defined in \eqref{eq:psi} is convex, $1$-Lipschitz, and of class $\mathcal C^1$, with
$$\psi'_\lambda(z)=
\begin{cases}
-1 & \text{ if }z\leq -\lambda,\\
\frac{z}{\lambda} & \text{ if }  |z|<\lambda,\\
1 & \text{ if }  z \geq \lambda.
\end{cases}$$
Moreover, its convex conjugate is given, for all $z \in \R$, by
\begin{equation}\label{psi*}
\psi^*_\lambda(z)=\frac{\lambda}{2} |z|^2+I_{[-1,1]}(z),
\end{equation}
where $I_{[-1,1]}$ is the indicator function of the interval $[-1,1]$ which is equal to $0$ in $[-1,1]$, and $+\infty$ outside.
\end{remark}

We now state the main result of this section.

\begin{theorem}
\label{final_theorem_eps_eq_zero}
Let $\Omega\subset \R^n$ be a bounded open set with $\mathcal C^1$ boundary and $\lambda>0$. Consider a source term $f \in H^1([0,T];L^2(\Omega))$ and an initial data $(u_0,v_0,\sigma_0,p_0) \in H^1(\O) \times H^2(\Omega) \times H(\Div,\O) \times  L^2(\Omega;\R^n) $ such that
\begin{equation}\label{eq:initial_conditions}
\begin{cases}
\nabla u_0 = \sigma_0 + p_0 \text{ in }L^2(\O;\R^n),\\
\sigma_0\cdot \nu + \lambda^{-1} v_0 = 0\quad\HH^{n-1}\text{ on }\partial \Omega,\\
|\sigma_0|\leq 1 \text{ a.e. in }\Omega.
\end{cases}
\end{equation}
Then there exist a unique triple $(u,\sigma,p)$ with the regularity
$$
\begin{cases}
u\in W^{2,\infty}([0,T];L^2(\Omega)) \cap \mathcal C^{0,1}([0,T];BV(\Omega)),\\
\sigma \in W^{1,\infty}([0,T];L^2(\Omega;\R^n)),\\
p \in  \mathcal C^{0,1}([0,T];\mathcal M(\Omega;\R^n)),
\end{cases}$$
which satisfies the following properties:
\begin{enumerate}
\item The initial conditions:
\[
u(0)   =  u_0, \quad \dot  u(0) = v_0, \quad 
\sigma(0)   =  \sigma_0, \quad p(0)   =  p_0;
\]
\item The additive decomposition: for all $t \in [0,T]$
\[
D u(t) = \sigma(t) + p(t)\quad \text{in }\mathcal M(\O;\R^n);
\]
\item The equation of motion: 
$$\ddot u - \Div\sigma = f \quad  \text{ in } L^2(0,T;L^2(\Omega));$$
\item The relaxed boundary condition:
$$
\sigma\cdot\nu+\psi'_\lambda(\dot u) = 0\quad \text{ in } L^2(0,T;L^2(\partial \Omega));
$$
\item The stress constraint: for all $t \in [0,T]$,
$$|\sigma(t)|\leq 1 \quad \text{ a.e. in }\O;$$
\item The flow rule: for a.e. $t \in [0,T]$,
$$|\dot p(t)|=[\sigma(t) \cdot \dot p(t)] \quad \text{ in }\mathcal M(\O).$$
\end{enumerate}
In addition, we have the following energy balance: for all $t\in [0,T]$,
\begin{multline}\label{eq:energy-balance2}
\frac12 \|\dot u(t)\|_2^2+\frac12 \|\sigma(t)\|_2^2 + \int_0^t |\dot p(s)|(\Omega)\ds+ \frac{\lambda}{2}\int_0^t \int_{\partial \Omega} | \sigma \cdot \nu |^2\dd \mathcal{H}^{n-1} \ds + \int_0^t \int_{\partial \Omega} \psi_\lambda (\dot u )\dd \mathcal{H}^{n-1}\ds \\
 = \frac12 \|v_0\|_2^2+\frac12 \|\sigma_0\|_2^2  + \int_0^t\int_{\Omega} f \dot u \dx\ds.
\end{multline}
\end{theorem}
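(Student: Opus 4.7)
The plan is to construct $(u,\sigma,p)$ as the vanishing viscosity limit of the sequence $(u_\e,\sigma_\e,p_\e)$ given by Theorem~\ref{final_theorem_at_level_epsilon}. Since $\|g_\e\|_{L^2(\partial\Omega)}\le \e\|\nabla v_0\|_{L^2(\partial\Omega;\R^n)}$, combining \eqref{eq:energy-balance} with Cauchy--Schwarz and Gronwall furnishes $\e$-independent bounds of $\dot u_\e$ in $L^\infty(0,T;L^2(\Omega))\cap L^2(0,T;L^2(\partial\Omega))$, $\sigma_\e$ in $L^\infty(0,T;L^2(\Omega;\R^n))$, $\sqrt\e\,\nabla\dot u_\e$ and $\sqrt\e\,\dot p_\e$ in $L^2(0,T;L^2(\Omega;\R^n))$, and $\dot p_\e$ in $L^1(0,T;L^1(\Omega;\R^n))$. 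The \emph{a posteriori} estimate \eqref{eq:apost_est} upgrades this with $\ddot u_\e\in L^\infty(0,T;L^2(\Omega))$, $\dot\sigma_\e\in L^\infty(0,T;L^2(\Omega;\R^n))$, and $\ddot u_\e\in L^2(0,T;L^2(\partial\Omega))$. Together with \eqref{decomposition_nablau_eps} and the trace inequality \eqref{eq:traceBV}, this shows that $u_\e$ and $\dot u_\e$ are bounded in $L^\infty(0,T;BV(\Omega))$ and $p_\e,\dot p_\e$ in $L^\infty(0,T;\mathcal{M}(\Omega;\R^n))$. Extracting subsequences and invoking Aubin--Lions, I obtain limits $(u,\sigma,p)$ with the announced regularity, with strong convergence of $u_\e$, $\dot u_\e$ and $\sigma_\e$ in $\mathcal{C}^0([0,T];L^2)$.

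\textbf{Step 2: Limit PDE and constraint.} The viscous flux $\e\nabla\dot u_\e$ tends to $0$ in $L^2(0,T;L^2(\Omega;\R^n))$, so the equation of motion \eqref{eq_motion_eps_equal_zero} yields $\ddot u-\Div\sigma=f$ distributionally, whence $\Div\sigma\in L^\infty(0,T;L^2(\Omega))$ and Anzellotti's trace theorem gives $\sigma\cdot\nu\in L^\infty(\partial\Omega\times(0,T))$. The additive decomposition passes to the limit weak-$*$ in measures, producing $Du=\sigma+p$. Rewriting \eqref{snd_eq_continuous} as $\sigma_\e-P_B(\sigma_\e)=\e\dot p_\e$ and noting that $\|\e\dot p_\e\|_{L^2(L^2)}^2=\e(\sqrt\e\,\|\dot p_\e\|_{L^2(L^2)})^2\to 0$, we identify $\sigma=P_B(\sigma)$, i.e.\ $|\sigma(t)|\le 1$ a.e.

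\textbf{Step 3: Energy balance, flow rule and boundary condition.} The crux is to pass to the limit in \eqref{eq:energy-balance} and to recover both the boundary condition and Hill's principle as equality cases. Dropping the non-negative viscous terms and using $g_\e\to 0$ strongly, I would apply Proposition~\ref{relax} to the pair $(\dot u_\e(s),\dot\sigma_\e(s))$ and integrate via Fatou's lemma to get
\[
\int_0^t|\dot p(s)|(\Omega)\ds+\int_0^t\!\!\int_{\partial\Omega}\psi_\lambda(\dot u)\dd\mathcal{H}^{n-1}\ds\le \liminf_{\e\to 0}\Big(\int_0^t\!\!\int_\Omega|\dot p_\e|\dxdt+\tfrac{1}{\lambda}\int_0^t\!\!\int_{\partial\Omega}|\dot u_\e|^2\dd\mathcal{H}^{n-1}\ds\Big),
\]
which, combined with the standard lower-semicontinuity of the quadratic terms, delivers the inequality ``$\le$'' in \eqref{eq:energy-balance2}. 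Securing the strong $L^2$-in-time convergence of $\dot\sigma_\e$ needed to apply Proposition~\ref{relax} is \emph{the main obstacle}; it is to be extracted from the $L^\infty(0,T;L^2)$ bound on $\dot\sigma_\e$ together with the equation of motion, which controls $\ddot\sigma_\e$ in a negative Sobolev space. To obtain the reverse inequality, I would test $\ddot u-\Div\sigma=f$ against $\dot u$ on $(0,t)\times\Omega$: Proposition~\ref{IPP_sig_u_varphi} combined with $D\dot u=\dot\sigma+\dot p$ gives
\[
\int_0^t\!\!\int_\Omega f\dot u\dxdt=\tfrac12\|\dot u(t)\|_2^2-\tfrac12\|v_0\|_2^2+\tfrac12\|\sigma(t)\|_2^2-\tfrac12\|\sigma_0\|_2^2+\int_0^t[\sigma\cdot\dot p](\Omega)\ds-\int_0^t\!\!\int_{\partial\Omega}(\sigma\cdot\nu)\dot u\dd\mathcal{H}^{n-1}\ds.
\]
Invoking Remark~\ref{rmk:ineq1} (so $[\sigma\cdot\dot p]\le|\dot p|$ since $\|\sigma\|_\infty\le 1$) and Fenchel's inequality $-(\sigma\cdot\nu)\dot u\le \psi_\lambda(\dot u)+\psi_\lambda^*(-\sigma\cdot\nu)$ with $\psi_\lambda^*$ as in \eqref{psi*} (which is finite precisely because $|\sigma\cdot\nu|\le 1$) produces the inequality ``$\ge$'' in \eqref{eq:energy-balance2}. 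Equality in \eqref{eq:energy-balance2} then forces equality in both the Fenchel inequality and in the convex duality $[\sigma\cdot\dot p]\le|\dot p|$, which respectively yield the boundary condition $\sigma\cdot\nu+\psi_\lambda'(\dot u)=0$ and the flow rule $|\dot p|=[\sigma\cdot\dot p]$.

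\textbf{Step 4: Uniqueness.} Given two solutions $(u_i,\sigma_i,p_i)$, I would test the hyperbolic equation satisfied by the difference against $\dot u_1-\dot u_2$ and apply Proposition~\ref{IPP_sig_u_varphi}. The boundary contribution has a favorable sign by monotonicity of $\psi_\lambda'$ (which is the derivative of a convex function), while expanding $[(\sigma_1-\sigma_2)\cdot(\dot p_1-\dot p_2)]$ and using $[\sigma_i\cdot\dot p_i]=|\dot p_i|$ together with $|[\sigma_i\cdot\dot p_j]|\le|\dot p_j|$ (Remark~\ref{rmk:ineq1} with $\|\sigma_i\|_\infty\le 1$) shows this bracket is a non-negative measure. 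This leads to $\tfrac12\|\dot u_1(t)-\dot u_2(t)\|_2^2+\tfrac12\|\sigma_1(t)-\sigma_2(t)\|_2^2\le 0$, which forces $\dot u_1=\dot u_2$ and $\sigma_1=\sigma_2$; coupled with the initial conditions and the additive decomposition, this gives $u_1=u_2$ and $p_1=p_2$.
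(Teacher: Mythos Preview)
Your overall architecture is right, and Steps~2 and~4 are essentially correct (your uniqueness argument is in fact more direct than the paper's, which establishes a Kato inequality at the $\e$-level and passes it to the limit). The genuine gap is in the strong convergences. In Step~1 you invoke Aubin--Lions to get $\sigma_\e\to\sigma$ and $\dot u_\e\to\dot u$ strongly in $\mathcal C^0([0,T];L^2)$, but $\sigma_\e$ carries no uniform spatial regularity beyond $L^2(\Omega;\R^n)$ (the equation of motion only controls $\Div(\sigma_\e+\e\nabla\dot u_\e)$, not $\Div\sigma_\e$), so there is no compact spatial embedding to feed into Aubin--Lions; likewise $BV(\Omega)$ does not embed compactly into $L^2(\Omega)$ for $n\ge 2$. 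The paper obtains these strong convergences by a different route: one subtracts the weak formulation satisfied by the limit from \eqref{eq_motion_eps_equal_zero}, tests with $\dot u_\e$, and uses the monotonicity $(\sigma_\e-\sigma)\cdot\dot p_\e\ge 0$ (since $|\sigma|\le 1$) to deduce $\|\dot u_\e(t)-\dot u(t)\|_2^2+\|\sigma_\e(t)-\sigma(t)\|_2^2\to 0$.

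The same issue resurfaces more seriously in Step~3, where you want to apply Proposition~\ref{relax} to $(\dot u_\e(s),\dot\sigma_\e(s))$ and acknowledge that strong $L^2$ convergence of $\dot\sigma_\e$ is ``the main obstacle''. Your proposed fix via $\ddot\sigma_\e$ in a negative space fails for the same reason: no spatial compactness. The paper never asks for strong convergence of $\dot\sigma_\e$. Instead, it writes the $\e$-energy balance between arbitrary $t_1<t_2$, uses Jensen to replace $\int_{t_1}^{t_2}\!\int_\Omega|\dot p_\e|$ and $\int_{t_1}^{t_2}\!\int_{\partial\Omega}|\dot u_\e|^2$ by the corresponding quantities for the \emph{difference quotients} $(p_\e(t_2)-p_\e(t_1))/(t_2-t_1)$ and $(u_\e(t_2)-u_\e(t_1))/(t_2-t_1)$, and applies Proposition~\ref{relax} to those; this needs only strong convergence of $(\sigma_\e(t_2)-\sigma_\e(t_1))/(t_2-t_1)$ at fixed times, which follows from the previous paragraph. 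One then lets $t_2\to t_1$ and applies Proposition~\ref{relax} once more. A related omission: your lower bound in Step~3 misses the term $\tfrac{\lambda}{2}\int_{\partial\Omega}|\sigma\cdot\nu|^2$ from \eqref{eq:energy-balance2}; the paper recovers it by splitting $\tfrac{1}{\lambda}|\dot u_\e|^2=\tfrac{1}{2\lambda}|\dot u_\e|^2+\tfrac{\lambda}{2}|(\sigma_\e+\e\nabla\dot u_\e)\cdot\nu-g_\e|^2$ via \eqref{eq:bdry_cond_eps}, and without it your two inequalities do not match, so you cannot separate the equality cases yielding the flow rule and the boundary condition.
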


\begin{remark}
In the sequel we will refer to the solution given by Theorem \ref{final_theorem_eps_eq_zero} as the {\it variational solution to the elasto-plastic problem} associated to the initial data $(u_0,v_0,\sigma_0,p_0)$ and the source term $f$. Unless otherwise specified, we always assume in the sequel that $f \in H^1([0,T];L^2(\Omega))$ and $(u_0,v_0,\sigma_0,p_0) \in H^1(\Omega) \times H^2(\Omega) \times H(\Div,\O) \times  L^2(\Omega;\R^n)$ satisfy \eqref{eq:initial_conditions}.
\end{remark}

The rest of this section is devoted to prove Theorem~\ref{final_theorem_eps_eq_zero}. We  consider the unique solution $(u_\e,\sigma_\e,p_\e)$ to the elasto-visco-plastic problem (given by Theorem \ref{final_theorem_at_level_epsilon}) associated to  the initial condition $(u_{0},v_{0},\sigma_0,p_{0})$ and the source terms $f$ and $g_\e=\e\nabla v_0\cdot \nu$. Using the estimates obtained in Theorem~\ref{final_theorem_at_level_epsilon}, we derive weak convergences which enable one to get the initial conditions, the equation of motion and the stress constraint. We then obtain in Subsection~\ref{subsec:4.3} some strong convergence results despite we did not yet identify the correct boundary condition. Together with the relaxation result Proposition \ref{relax}, it allows one to derive a first energy inequality between two arbitrary times. In Subsection~\ref{subsec:4.5} we show that this inequality is actually an equality  which leads to the flow rule in a measure theoretic sense, and the relaxed boundary condition. Eventually, uniqueness of the solution is established in Subsection~\ref{subsec:4.8} as a consequence of a Kato inequality which states a comparison principle between two solutions.

\subsection{Weak convergences}\label{subsec:4.1}

In the proof of Theorem \ref{final_theorem_at_level_epsilon}, we have established the following estimate (see \eqref{eq:apost_est})
\begin{multline}\label{estimate_snd_derivative_epsilon}
\sup_{t\in [0,T]} \left\| \ddot u_\e(t) \right\|^2_2 + \sup_{t\in [0,T]} \left\| \dot \sigma_\e(t) \right\|^2_2 + \frac{1}{\lambda}\int_0^T \left\| \ddot u_\e(t) \right\|^2_{L^2(\partial \Omega)} \dt \\
\le  C\bigg(\|\Div(\sigma_0+\e\nabla v_{0})+f(0)\|_2^2
+  \| \nabla v_{0} \|^2_2 + \Big(\int_0^T \| \dot f(t) \|_2\dt\Big)^2 \bigg),
\end{multline}
while the energy balance \eqref{eq:energy-balance} gives
\begin{multline}
\label{estimate_first_derivative_epsilon}
\sup_{t\in [0,T]} \|\dot u_\e(t) \|^2_2 + \sup_{t\in [0,T]} \|\sigma_\e (t)\|_2^2+ \frac{1}{{\lambda}} \int_0^T \| \dot u_\e(t) \|^2_{L^2(\partial \O)}\dt \\
+ {\e} \int_0^T\| \nabla \dot u_\e(t) \|^2_2\dt+ {\e} \int_0^T\| \dot p_\e(t) \|_2^2\dt+ \int_0^T \int_\Omega |\dot p_\e|\dxdt \\
\le  C \left( \|v_{0}\|_2^2 + \|\sigma_0\|^2_2+ \Big(\int_0^T\| f (t)\|_2\dt\Big)^2 +\lambda\e \|\nabla v_0\cdot \nu\|_{L^2(\partial \O)}^2\right),
\end{multline}
where the constants $C>0$ occuring in \eqref{estimate_snd_derivative_epsilon} and \eqref{estimate_first_derivative_epsilon} are independent of $\e$ and $\lambda$.
Using also that $u_\e \in W^{2,\infty}([0,T];L^2(\O))$ and $u_{0} \in H^1(\O)$, then
\[
u_\e(t) = u_{0} + \int_0^t \dot u_\e(s) \ds \quad \text{ for all }t \in [0,T],
\]
where the integral is intended as a Bochner integral in $L^2(\O)$, and we get
\begin{equation}
\label{u_eps_W2inf_t_L2_x_estim}
\sup_{\e>0}\| u_\e \|_{W^{2,\infty}([0,T];L^2(\O))} <\infty.
\end{equation}
Arguing similarly yields
\begin{equation}
\label{trace_u_eps_H2_t_L2_x_estim}
\sup_{\e>0} \| u_\e \|_{H^2([0,T]; L^2(\partial \Omega))} <\infty,
\end{equation} 
and
\begin{equation}
\label{sigma_eps_W2inf_t_L2_x_estim}
\sup_{\e>0}\| \sigma_\e \|_{W^{1,\infty}([0,T];L^2(\O))} <\infty.
\end{equation}

Thanks to the estimates~\eqref{u_eps_W2inf_t_L2_x_estim}, \eqref{trace_u_eps_H2_t_L2_x_estim} and \eqref{sigma_eps_W2inf_t_L2_x_estim} we can extract a subsequence (not relabeled), and find functions $u\in W^{2,\infty}([0,T];L^2(\O))$, $\sigma \in W^{1,\infty}([0,T];L^2(\O;\R^n))$ and $w \in H^2([0,T];L^2(\partial \Omega))$  such that
\begin{equation}\label{eq:conv_0}
\begin{cases}
u_\e \wto u &\text{weakly* in } W^{2,\infty}([0,T];L^2(\O)), \\
\sigma_\e \wto \sigma &\text{weakly* in } W^{1,\infty}([0,T];L^2(\O;\R^n)), \\
u_\e \wto w &\text{weakly in } H^2([0,T];L^2(\partial \Omega)).
\end{cases}
\end{equation}
Moreover, according to \cite[Lemma 2.7]{DalMasoScala}, for every $t\in [0,T]$,
\begin{equation}\label{eq:conv_1}
\begin{cases}
u_\e(t) \wto u(t) & \text{weakly in } L^2(\O), \\
\dot u_\e(t) \wto \dot u(t) &\text{weakly in } L^2(\O), \\
\sigma_\e(t) \wto \sigma(t) &\text{weakly in } L^2(\O;\R^n), \\
u_\e(t) \wto w(t) &\text{weakly in } L^2(\partial \Omega), \\
\dot u_\e(t) \wto \dot w(t) &\text{weakly in } L^2(\partial \Omega).
\end{cases}
\end{equation}

\medskip

In order to derive weak compactness of the sequence of plastic strains, we use the energy balance \eqref{eq:energy-balance} between two arbitrary times $0 \leq t_1<t_2 \leq T$, which leads to
\begin{multline}
\label{energy_equa_level_eps2}
\int_{t_1}^{t_2} \int_\Omega |\dot p_\e|\dx\ds  \leq  \int_{t_1}^{t_2} \int_\Omega f \dot u_\e\dx\ds+ \int_{t_1}^{t_2} \int_{\partial \Omega} g_\e \dot u_\e\dd\HH^{n-1}\ds\\
+\frac{1}{2}\left(\| \dot u_\e(t_1) \|^2_2-\| \dot u_\e(t_2) \|_2^2 \right) + \frac{1}{2}\left( \| \sigma_\e(t_1) \|_2^2 - \| \sigma_\e(t_2) \|_2^2 \right).
\end{multline}
Using the fact that $f\in L^\infty(0,T;L^2(\O))$ and $(\dot u_\e)_{\e>0}$ is uniformly bounded in $L^\infty(0,T;L^2(\O))$ by \eqref{u_eps_W2inf_t_L2_x_estim}, we infer that
\begin{equation}
\label{source_term_eps_time_estim}
\left| \int_{t_1}^{t_2} \int_\Omega f \dot u_\e\dx\dd s \right| \le C(t_2-t_1),
\end{equation}
for some constant $C>0$ independent of $\e$. Then, using that $g_\e=\e\nabla v_0 \cdot \nu \in L^2(\partial \O)$ and $(\dot u_\e)_{\e>0}$ is uniformly bounded in $L^\infty(0,T;L^2(\partial \O))$ by \eqref{trace_u_eps_H2_t_L2_x_estim}, 
\begin{equation}
\label{g_eps_time_estim}
\left|\int_{t_1}^{t_2} \int_{\partial \Omega} g_\e \dot u_\e\dd\HH^{n-1}\ds\right| \leq C(t_2-t_1),
\end{equation}
for some constant $C>0$ independent of $\e$. Next, using again \eqref{u_eps_W2inf_t_L2_x_estim} yields
\begin{eqnarray}\label{eq:estim2}
 \left|  \| \dot u_\e(t_1) \|^2_2-\| \dot u_\e(t_2) \|_2^2 \right|   & =&  \left| \int_\O (\dot u_\e(t_1)-\dot u_\e(t_2))(\dot u_\e(t_1)+\dot u_\e(t_2))\dx \right|  \nonumber\\
& \leq & C \|\dot u_\e(t_2)-\dot u_\e(t_1)\|_2 \leq C(t_2-t_1),
\end{eqnarray}
where again, $C>0$ is independent of $\e$.  Similarly, using  \eqref{sigma_eps_W2inf_t_L2_x_estim} leads to
\begin{equation}\label{eq:estim3}
 \left| \| \sigma_\e(t_1) \|^2_2-\| \sigma_\e(t_2) \|_2^2 \right|  \leq C \|\sigma_\e(t_2)-\sigma_\e(t_1)\|_2 \leq C(t_2-t_1),
\end{equation}
Gathering \eqref{energy_equa_level_eps2}--\eqref{eq:estim3} and using Jensen's inequality yields
$$\int_\O |p_\e(t_2)-p_\e(t_1)|\dx \le C (t_2-t_1).$$
It is thus possible to apply Ascoli-Arzela Theorem to get, up to another subsequence independent of time, the existence of $p \in \mathcal C^{0,1}([0,T];\mathcal M(\O;\R^n))$ such that for all $t \in [0,T]$,
\begin{equation}\label{eq:conv_2}
p_\e(t) \wto p(t) \quad\text{ weakly* in }\mathcal M(\O;\R^n).
\end{equation}

Using next the decomposition $\nabla u_\e = p_\e + \sigma_\e$ the convergences \eqref{eq:conv_1} and \eqref{eq:conv_2} and the already established regularity properties for $(u,\sigma,p)$, we obtain that   $u \in \mathcal C^{0,1}([0,T];BV(\O))$, and for all $t \in [0,T]$,
\begin{equation}\label{eq:conv_3}
u_\e(t) \wto u(t) \quad\text{ weakly* in }BV(\O).
\end{equation}

\begin{remark}
Let us stress that, as $u(t) \in BV(\O)$ for all $t \in [0,T]$, its trace is well defined as an element of $L^1(\partial \O)$. However, the trace mapping from $BV(\O)$ to $L^1(\partial \O)$ is not sequentially weakly* continuous, and therefore we cannot ensure that $w(t)$ is the trace of $u(t)$.
\end{remark}

%\paragraph{The initial condition.} %% Version edition Springer
\noindent \textit{The initial condition.}
Since $u_\e(0)=u_{0}$, $\dot u_\e(0)= v_{0}$, $\sigma_\e(0)=\sigma_0$, $p_\e(0)=p_{0}$  for all $\e>0$, we obtain that $u(0)=u_0$, $\dot u(0) = v_0$, $\sigma(0)=\sigma_0$ and $p(0)= p_0$.

%\paragraph{The additive decomposition.} %% Version edition Springer
\noindent\textit{The additive decomposition.}
Using the additive decomposition~\eqref{decomposition_nablau_eps} and the weak convergences \eqref{eq:conv_1}, \eqref{eq:conv_2} and \eqref{eq:conv_3}, we infer that  for all $t \in [0,T]$,
$$Du(t)=\sigma(t)+p(t) \text{ in } \mathcal{M}(\O;\R^n).$$

%\paragraph{The stress constraint.}  %% Version edition Springer
\noindent\textit{The stress constraint.}
Let $\tau_\e:=P_B(\sigma_\e)$. Since $\|\tau_\e\|_{L^\infty(\O \times (0,T))}\leq 1$, we can assume, up to another subsequence, that $\tau_\e \wto \tau$ weakly* in $L^\infty(\O \times (0,T);\R^n)$ with $\|\tau\|_{L^\infty(\O \times (0,T))} \leq 1$. According to estimate~\eqref{estimate_first_derivative_epsilon} and the flow rule \eqref{snd_eq_continuous}, we get that
$$\int_0^T \|\sigma_\e(t)-\tau_\e(t)\|^2_2\dt \leq C\e \to 0,$$
so that $\sigma=\tau$. Consequently, for all $t \in [0,T]$, we have
$$|\sigma(t)|\leq 1 \quad\text{ a.e. in }\O.$$

%\paragraph{The equation of motion.} %% Version edition Springer
\noindent\textit{The equation of motion.}
According to \eqref{estimate_first_derivative_epsilon}, we get that $\e \nabla \dot u_\e \to 0$ strongly in $L^2(0,T;L^2(\O;\R^n))$. We thus deduce that $\sigma_\e+\e \nabla \dot u_\e \to \sigma$ strongly in $L^2(0,T;L^2(\O;\R^n))$, and since from the equation of motion at fixed $\e$, one has  $\Div(\sigma_\e + \e \nabla \dot u_\e) = f-\ddot u_\e$, we obtain thanks to the estimate~\eqref{u_eps_W2inf_t_L2_x_estim} that
$$\sup_{\e>0} \| \sigma_\e + \e \nabla \dot u_\e \|_{L^2(0,T;H(\Div,\O))}<+\infty.$$
As a consequence $\Div(\sigma_\e + \e \nabla \dot u_\e)  \wto \Div\sigma$ weakly in $L^2(0,T;L^2(\O))$, and
$$\ddot u-\Div \sigma=f \quad\text{ in }L^2(0,T;L^2(\O)).$$
Note also that $(\sigma_\e+\e \nabla \dot u_\e) \cdot \nu \wto \sigma \cdot \nu$ weakly in $L^2(0,T;H^{-1/2}(\partial \Omega))$. Using the boundary condition at fixed $\e$, we also have that $(\sigma_\e+\e \nabla \dot u_\e) \cdot \nu=g_\e-\lambda^{-1}\dot u_\e$ which is bounded in $L^2(0,T;L^2(\partial \Omega))$ according to \eqref{trace_u_eps_H2_t_L2_x_estim}, and consequently,
\begin{equation}
\label{weak_cv_sigma_eps_nu_L2_t_L2_dOmega}
(\sigma_\e+\e \nabla \dot u_\e) \cdot \nu \wto \sigma \cdot \nu\quad \text{in } L^2(0,T;L^2(\partial \Omega)).
\end{equation}
\begin{remark}\label{rem:wf1}
Passing to the limit in \eqref{eq_motion_eps_equal_zero}, we get for all $\varphi \in L^2(0,T;H^1(\O))$,
\begin{equation}
\label{eq_ueps_with_global_regularity}
\int_0^T \int_\Omega\ddot u \varphi\dxdt + \int_0^T \int_\Omega\sigma \cdot \nabla \varphi\dxdt + \frac{1}{\lambda} \int_0^T \int_{\partial \Omega} \dot w \varphi\dd \mathcal{H}^{n-1} \dt = \int_0^T \int_\Omega f\varphi\dxdt.
\end{equation}
\end{remark}

\subsection{Strong convergences}
\label{subsec:4.3}

At this stage,  it still remains to prove the boundary condition and the flow rule. To do that, we need to improve some of the weak convergences established above into strong convergences. This is the object of the following result.

\begin{proposition}\label{prop:strong_conv}
The following strong convergences hold:
\[
\begin{cases}
\dot u_\e \to\dot u &\text{strongly in } \mathcal C^0([0,T];L^2(\O)), \\
\sigma_\e \to \sigma &\text{strongly in } \mathcal C^0([0,T];L^2(\O;\R^n)), \\
\sqrt{\e} \nabla \dot u_\e \to 0 &\text{strongly in } L^2(0,T;L^2(\O;\R^n)), \\
\dot u_\e \to \dot w &\text{strongly in } L^2(0,T;L^2(\partial \Omega)).
\end{cases}
\]
\end{proposition}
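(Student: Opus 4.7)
The plan is to leverage the energy balance \eqref{eq:energy-balance} at level $\e$, which is an equality, and combine it with weak lower semicontinuity of each non-negative term on its left-hand side to squeeze each weak limit into a strong one. Norm convergence combined with weak convergence in a Hilbert space yields strong convergence; norm convergence will be read off by matching an upper $\limsup$-bound against a matching $\liminf$-bound.

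First, I would pass to the limit on the right-hand side of \eqref{eq:energy-balance}. Since $\|g_\e\|_{L^2(\partial\O)} = \e\|\nabla v_0\cdot\nu\|_{L^2(\partial\O)} \to 0$ (using $v_0\in H^2(\O)$) and $\dot u_\e$ is uniformly bounded in $L^2(0,T;L^2(\partial\O))$ by \eqref{trace_u_eps_H2_t_L2_x_estim}, the boundary source vanishes; the bulk source converges to $\int_0^t\!\int_\O f\dot u\dx\ds$ via weak-strong pairing. Keeping all non-negative dissipation terms on the left, this gives
\begin{multline*}
\limsup_{\e\to 0}\Big[\tfrac12\|\dot u_\e(t)\|_2^2 + \tfrac12\|\sigma_\e(t)\|_2^2 + \e\int_0^t\!\!\int_\O|\nabla\dot u_\e|^2\dx\ds + \tfrac{1}{\lambda}\int_0^t\!\!\int_{\partial\O}|\dot u_\e|^2\dd\HH^{n-1}\ds \\ + \int_0^t\!\!\int_\O|\dot p_\e|\dx\ds\Big] \leq \tfrac12\|v_0\|_2^2 + \tfrac12\|\sigma_0\|_2^2 + \int_0^t\!\!\int_\O f\dot u\dx\ds.
\end{multline*}
Weak lower semicontinuity of the $L^2$-norms (pointwise in $t$, respectively in $L^2((0,t)\times\partial\O)$) combined with total variation lower semicontinuity applied to the equi-Lipschitz-in-time curve $p_\e \in \mathcal C^{0,1}([0,T];\mathcal M(\O;\R^n))$ provides matching $\liminf$-bounds by $\|\dot u(t)\|_2$, $\|\sigma(t)\|_2$, $\|\dot w\|_{L^2((0,t)\times\partial\O)}$ and $\int_0^t|\dot p(s)|(\O)\ds$, while the two viscous contributions have nonnegative $\liminf$.

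To close the chain, I still need the reverse inequality
\[\tfrac12\|v_0\|_2^2 + \tfrac12\|\sigma_0\|_2^2 + \int_0^t\!\!\int_\O f\dot u\dx\ds \leq \tfrac12\|\dot u(t)\|_2^2 + \tfrac12\|\sigma(t)\|_2^2 + \tfrac{1}{\lambda}\!\!\int_0^t\!\!\int_{\partial\O}|\dot w|^2\dd\HH^{n-1}\ds + \int_0^t|\dot p(s)|(\O)\ds,\]
which I would derive from the limit equation \eqref{eq_ueps_with_global_regularity}. Testing against a temporal mollification of $\dot u$, the time term integrates to $\tfrac12(\|\dot u(t)\|_2^2 - \|v_0\|_2^2)$; the Anzellotti integration-by-parts formula (Proposition \ref{IPP_sig_u_varphi}) combined with the additive decomposition $Du = \sigma + p$ rewrites the divergence contribution as $\tfrac12(\|\sigma(t)\|_2^2 - \|\sigma_0\|_2^2) + \int_0^t[\sigma(s)\cdot\dot p(s)](\O)\ds$, and the constraint $|\sigma|\leq 1$ with Remark \ref{rmk:ineq1} gives $[\sigma\cdot\dot p]\leq|\dot p|$ as measures.

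Combining the three bounds forces every intermediate inequality to be an equality. In particular $\|\dot u_\e(t)\|_2\to\|\dot u(t)\|_2$, $\|\sigma_\e(t)\|_2\to\|\sigma(t)\|_2$, $\|\dot u_\e\|_{L^2((0,t)\times\partial\O)}\to\|\dot w\|_{L^2((0,t)\times\partial\O)}$, and $\e\int_0^t\!\int_\O|\nabla\dot u_\e|^2\dx\ds\to 0$. Together with the weak convergences of Subsection \ref{subsec:4.1}, this promotes each weak convergence to a strong one (pointwise in $t$ or in spacetime $L^2$); the equi-Lipschitz-in-$t$ bounds \eqref{u_eps_W2inf_t_L2_x_estim} and \eqref{sigma_eps_W2inf_t_L2_x_estim} then lift the pointwise convergences of $\dot u_\e$ and $\sigma_\e$ to $\mathcal C^0([0,T];L^2)$ via Ascoli-Arzel\`a. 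The main obstacle is the third-step reverse inequality, since $\dot u(s)$ lies only in $BV(\O)\cap L^2(\O)$ rather than in $H^1(\O)$, and the trace of $\dot u$ need not coincide with the boundary limit $\dot w$; I would handle this via temporal mollification, with the boundary mismatch governed by Proposition \ref{relax} and a Fenchel-Young inequality against $\psi_\lambda$.
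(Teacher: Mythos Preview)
Your strategy is different from the paper's and, as written, has a genuine gap at the boundary. The paper does not argue by matching energies; instead it subtracts the limit weak formulation \eqref{eq_ueps_with_global_regularity} from the $\e$-level formulation \eqref{eq_motion_eps_equal_zero} and tests with $\varphi=\mathds{1}_{[0,t]}\dot u_\e\in L^2(0,T;H^1(\O))$. Using the additive decomposition and the flow rule one has $(\sigma_\e-\sigma)\cdot\dot p_\e\geq 0$ (since $\|\sigma\|_\infty\le 1$), and after an integration by parts in time one reads off directly
\[
\tfrac12\|\dot u_\e(t)-\dot u(t)\|_2^2+\tfrac12\|\sigma_\e(t)-\sigma(t)\|_2^2
+\e\!\int_0^t\!\!\int_\O|\nabla\dot u_\e|^2+\tfrac{1}{\lambda}\!\int_0^t\!\!\int_{\partial\O}(\dot u_\e-\dot w)^2 \le R_\e(t),
\]
where $R_\e(t)$ involves only products of a weakly convergent factor against a fixed $L^2$ factor (plus the $g_\e$ term), hence $R_\e(t)\to 0$. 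This gives the four strong convergences in one stroke, without any reverse energy inequality.

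In your scheme the critical step is the reverse inequality. Using the limit equation of motion together with Proposition~\ref{IPP_sig_u_varphi} and $\sigma\cdot\nu=-\lambda^{-1}\dot w$ (which follows from \eqref{eq_ueps_with_global_regularity}), one obtains
\[
\tfrac12\|v_0\|_2^2+\tfrac12\|\sigma_0\|_2^2+\int_0^t\!\!\int_\O f\dot u
=\tfrac12\|\dot u(t)\|_2^2+\tfrac12\|\sigma(t)\|_2^2+\int_0^t[\sigma\cdot\dot p](\O)\,ds+\tfrac{1}{\lambda}\!\int_0^t\!\!\int_{\partial\O}\dot w\,\dot u,
\]
where $\dot u$ on $\partial\O$ is the $BV$ trace. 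To match your stated lower bound you would need $\dot w\,\dot u\le|\dot w|^2$, i.e.\ $\dot w(\dot u-\dot w)\le 0$. But this is \emph{false} in general: once the full analysis is complete one finds $\dot w=T_\lambda(\dot u)$, and wherever $|\dot u|>\lambda$ one has $\dot w(\dot u-\dot w)>0$. So the squeeze does not close with the separate liminf bounds $\tfrac{1}{\lambda}\|\dot w\|^2$ and $\int|\dot p|$ that you wrote.

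The repair you hint at (Proposition~\ref{relax} plus Fenchel--Young) does work, but not in the form you sketched: one must split $\tfrac{1}{\lambda}|\dot u_\e|^2=\tfrac{1}{2\lambda}|\dot u_\e|^2+\tfrac{\lambda}{2}|(\sigma_\e+\e\nabla\dot u_\e)\cdot\nu-g_\e|^2$ via the $\e$-level boundary condition, use Proposition~\ref{relax} (after a Jensen/difference-quotient step) for the \emph{joint} liminf of $\tfrac{1}{2\lambda}|\dot u_\e|^2+|\dot p_\e|$ to get $\psi_\lambda(\dot u)+|\dot p|$, and then invoke $\tfrac{1}{\lambda}\dot w\,\dot u\le \psi_\lambda(\dot u)+\tfrac{1}{2\lambda}|\dot w|^2$ (Fenchel--Young with $\psi_\lambda^*$ and $|\dot w|\le\lambda$). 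This is precisely the machinery of Proposition~\ref{prop:nrjineq}, which in the paper is proved \emph{after} the present strong convergences; your route therefore merges the two arguments and is considerably heavier than the paper's four-line difference estimate.
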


\begin{proof}
Substracting equations \eqref{eq_motion_eps_equal_zero} to \eqref{eq_ueps_with_global_regularity}, and taking $\varphi:=\mathds{1}_{[0,t]} \dot u_\e \in L^2(0,T;H^1(\O))$ where $t \in [0,T]$ as test function,  we get that
\begin{multline*}
\int_0^t \int_\Omega (\ddot u_\e - \ddot u) \dot u_\e\dx\ds + \int_0^t \int_\Omega (\sigma_\e-\sigma) \cdot \nabla \dot u_\e\dx\ds \\
+ \e \int_0^t \int_\Omega \left| \nabla \dot u_\e \right|^2\dx\ds + \frac{1}{\lambda} \int_0^t \int_{\partial \Omega} (\dot u_\e-\dot w) \dot u_\e\dd \mathcal{H}^{n-1} \ds=\int_0^t \int_{\partial \O}g_\e \dot u_\e\dd\HH^{n-1}\ds.
\end{multline*}
Thanks to the additive decomposition~\eqref{decomposition_nablau_eps}, we have
\[
\int_0^t \int_\Omega (\sigma_\e-\sigma) \cdot \nabla \dot u_\e\dx\ds = \int_0^t \int_\Omega (\sigma_\e-\sigma)\cdot \dot p_\e\dx\ds+  \int_0^t \int_\Omega (\sigma_\e-\sigma) \cdot \dot \sigma_\e\dx\ds.
\]
According to the flow rule~\eqref{snd_eq_continuous} and the fact that  for all $t \in [0,T]$, $\|\sigma(t)\|_\infty\leq 1$, we deduce that
\[
\int_0^t \int_\Omega (\sigma_\e-\sigma) \cdot \dot p_\e\dx\ds \ge 0,
\]
and thus,
\begin{multline*}
\int_0^t \int_\Omega (\ddot u_\e - \ddot u) (\dot u_\e-\dot u)\dx\ds  + \e \int_0^t \int_\Omega \left| \nabla \dot u_\e \right|^2\dx\ds\\
 + \int_0^t\int_\Omega (\sigma_\e-\sigma) \cdot ( \dot \sigma_\e-\dot \sigma)\dx\ds+ \frac{1}{\lambda}\int_0^t \int_{\partial \Omega} (\dot u_\e-\dot w)^2\dd \mathcal{H}^{n-1} \ds \\
 \le 
 - \int_0^t \int_\Omega (\ddot u_\e - \ddot u) \dot u\dx\ds  - \int_0^t \int_\Omega (\sigma_\e-\sigma) \cdot \dot \sigma\dx\ds  \\
 - \frac{1}{\lambda}\int_0^t \int_{\partial \Omega} (\dot u_\e-\dot w)\dot w\dd \mathcal{H}^{n-1} \ds+\int_0^t \int_{\partial \O}g_\e \dot u_\e\dd\HH^{n-1}\ds.
\end{multline*}
The weak convergences \eqref{eq:conv_0} imply that the right hand side of the previous inequality tends to $0$ as $\e\to 0$. Thus, noticing that
$$\int_0^t \int_\Omega (\ddot u_\e - \ddot u) (\dot u_\e-\dot u)\dx\ds
= \frac{1}{2}  \|\dot u_\e(t) - \dot u(t)\|^2_2,$$
and
$$\int_0^t \int_\Omega (\sigma_\e-\sigma) \cdot (\dot \sigma_\e-\dot \sigma)\dx\ds = \frac{1}{2}  \|\sigma_\e(t) - \sigma(t)\|_2^2,$$
we get the desired strong convergences. 
\end{proof}

An important consequence of the strong convergences is the derivation of an energy inequality between two arbitrary times. The following result makes use of the lower bound inequality established in Proposition \ref{relax}.

\begin{proposition}\label{prop:nrjineq}
For every $0\le t_1 \le t_2 \le T$,
\begin{multline*}
\frac{1}{2}\|\dot u(t_2)\|_2^2 + \frac12 \|\sigma(t_2)\|_2^2+\frac{\lambda}{2} \int_{t_1}^{t_2}\int_{\partial \O} |\sigma\cdot\nu|^2\dd\HH^{n-1}\ds
 +  \int_{t_1}^{t_2} \int_{\partial \Omega}\psi_\lambda(\dot  u)\dd \mathcal{H}^{n-1}\ds+ \int_{t_1}^{t_2} |\dot p(s)|(\O)\ds\\
\le \frac{1}{2}\|\dot u(t_1)\|_2^2 + \frac12 \|\sigma(t_1)\|_2^2 + \int_{t_1}^{t_2} \int_\Omega f \dot u\dx\ds.
\end{multline*}
In addition, for a.e. $t \in [0,T]$,
$$\frac{\lambda}{2} \int_{\partial \O}|\sigma(t)\cdot\nu|^2\dd\HH^{n-1}
+\int_{\partial \O} \psi_\lambda (\dot u(t))\dd \mathcal{H}^{n-1} +  |\dot p(t)|(\O)\le [\sigma(t)\cdot \dot p(t)](\O)-\int_{\partial \O}(\sigma(t)\cdot\nu) \dot u(t)\dd\HH^{n-1}.$$
\end{proposition}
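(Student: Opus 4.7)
My plan is to start from the $\e$-level energy balance \eqref{eq:energy-balance} applied between the times $t_1\le t_2$ and to pass to the liminf as $\e\to 0$, using the convergences established in Subsections~\ref{subsec:4.1} and \ref{subsec:4.3}. The crucial observation is that the two boundary contributions $\tfrac{\lambda}{2}|\sigma\cdot\nu|^2$ and $\int\psi_\lambda(\dot u)$ will emerge together from the single $\e$-level term $\lambda^{-1}\int|\dot u_\e|^2$ via the identity $\sigma\cdot\nu=-\lambda^{-1}\dot w$ obtained in the limit of the boundary condition \eqref{eq:bdry_cond_eps}.

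\textbf{Passage to the limit (first inequality).} Dropping the non-negative viscous contributions $\e\int|\nabla\dot u_\e|^2$ and $\e\int|\dot p_\e|^2$ on the left side of \eqref{eq:energy-balance} and taking $\liminf_\e$, the kinetic and elastic energies at $t_1,t_2$ converge by the strong convergences in $\mathcal C^0([0,T];L^2)$ from Proposition~\ref{prop:strong_conv}; $\int f\dot u_\e\to\int f\dot u$ by strong $L^2$-convergence; $\int g_\e\dot u_\e\to 0$ since $g_\e=\e\nabla v_0\cdot\nu\to 0$ in $L^2(\partial\O)$ while $\dot u_\e$ is bounded on $\partial\O\times(0,T)$; and $\lambda^{-1}\int|\dot u_\e|^2\to\lambda^{-1}\int|\dot w|^2$ by the strong boundary convergence. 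For the plastic dissipation I view $\dot p_\e\,\ds\otimes\dx$ as an $\R^n$-valued Radon measure on $[t_1,t_2]\times\O$, identify its weak-$*$ limit with the measure-valued time-derivative of $p\in\mathcal C^{0,1}([0,T];\mathcal M)$ by testing against smooth functions and integrating by parts in time, and apply lower semicontinuity of the total variation together with Fubini slicing to get
\[
\int_{t_1}^{t_2}|\dot p(s)|(\O)\,\ds \le \liminf_{\e\to 0}\int_{t_1}^{t_2}\int_\O|\dot p_\e|\,\dxdt.
\]
Passing to the limit in \eqref{eq:bdry_cond_eps} via \eqref{weak_cv_sigma_eps_nu_L2_t_L2_dOmega} and the strong convergence of $\dot u_\e$ on $\partial\O$ yields $\sigma\cdot\nu+\lambda^{-1}\dot w=0$ in $L^2(0,T;L^2(\partial\O))$; since $\|\sigma\cdot\nu\|_\infty\le\|\sigma\|_\infty\le 1$, one has $|\dot w|\le\lambda$, so $\psi_\lambda(\dot w)=|\dot w|^2/(2\lambda)$, and a direct algebraic check gives
\[
\tfrac{1}{\lambda}|\dot w|^2=\tfrac{\lambda}{2}|\sigma\cdot\nu|^2+\psi_\lambda(\dot w),
\]
which substituted in the previous bound produces the first inequality (with $\dot u$ on $\partial\O$ understood as $\dot w$).

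\textbf{Pointwise version.} Specializing the first inequality to $t_1=t$ and $t_2=t+h$, dividing by $h$ and sending $h\to 0^+$ at a common Lebesgue point of the integrands yields the differential form
\[
\tfrac{d}{dt}\bigl[\tfrac12\|\dot u\|_2^2+\tfrac12\|\sigma\|_2^2\bigr]+\tfrac{\lambda}{2}\int_{\partial\O}|\sigma\cdot\nu|^2+\int_{\partial\O}\psi_\lambda(\dot u)+|\dot p|(\O)\le\int f\dot u.
\]
Using the equation of motion $\ddot u=\Div\sigma+f$ I express $\tfrac{d}{dt}\tfrac12\|\dot u\|_2^2=\int(\Div\sigma)\dot u+\int f\dot u$; since $\dot u(t)\in BV(\O)$ for a.e.\ $t$ (by the Lipschitz regularity $u\in\mathcal C^{0,1}([0,T];BV)$) and $\sigma(t)\in H(\Div,\O)\cap L^\infty$ (from $\|\sigma\|_\infty\le 1$ and $\Div\sigma=\ddot u-f\in L^\infty(0,T;L^2)$), Proposition~\ref{IPP_sig_u_varphi} with $\varphi\equiv 1$ gives $\int(\Div\sigma)\dot u=-[\sigma\cdot D\dot u](\O)+\int_{\partial\O}(\sigma\cdot\nu)\dot u$. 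Finally $D\dot u=\dot\sigma+\dot p$ and Definition~\ref{def_Sigma_p} give $[\sigma\cdot D\dot u](\O)=\int\sigma\cdot\dot\sigma+[\sigma\cdot\dot p](\O)$; substituting back and cancelling the $\int f\dot u$ and $\int\sigma\cdot\dot\sigma$ contributions produces the stated pointwise inequality.

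\textbf{Main obstacle.} The most delicate step is the lower semicontinuity of the plastic dissipation: one must carefully identify the weak-$*$ limit of the Radon measures $\dot p_\e\,\ds\otimes\dx$ with the measure-valued time-derivative of $p$, and invoke the Fubini-type identity $|\dot p|([t_1,t_2]\times\O)=\int_{t_1}^{t_2}|\dot p(s)|(\O)\,\ds$ valid for this Lipschitz curve with values in $\mathcal M(\O;\R^n)$. A secondary subtlety is notational: the symbol $\dot u$ on $\partial\O$ is identified throughout with the strong $L^2$-limit $\dot w$ furnished by Proposition~\ref{prop:strong_conv}, which is the natural object produced by the boundary convergences and is the quantity consistent with the trace appearing in the integration-by-parts formula used in the pointwise step.
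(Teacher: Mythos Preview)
Your approach differs from the paper's in how the boundary term is handled, and this difference hides a genuine gap. You obtain $\psi_\lambda(\dot w)$ on $\partial\O$, where $\dot w$ is the strong $L^2(\partial\O)$-limit of $\dot u_\e$, and you then claim that $\dot w$ coincides with the trace appearing in the integration-by-parts formula of Proposition~\ref{IPP_sig_u_varphi}. This is not correct: that proposition produces the \emph{BV trace} of $\dot u(t)\in BV(\O)$, and at this stage of the argument there is no reason for the BV trace of $\dot u$ to equal $\dot w$. In fact it does not: as established only \emph{after} this proposition (see the remark following the proof of the boundary condition), one has $\dot w=\lambda\psi'_\lambda(\dot u)$, which differs from $\dot u$ wherever $|\dot u|>\lambda$ on $\partial\O$. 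So your first inequality and your pointwise inequality both carry the wrong boundary integrand, and the last paragraph of your proposal papers over exactly this discrepancy.

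The paper's proof is designed precisely to sidestep this identification problem. Rather than passing directly to the limit in the dissipation terms, it first applies Jensen's inequality to replace $\int_{t_1}^{t_2}\int_\O|\dot p_\e|$ and $\tfrac{1}{2\lambda}\int_{t_1}^{t_2}\int_{\partial\O}|\dot u_\e|^2$ by the corresponding quantities for the difference quotients $(u_\e(t_2)-u_\e(t_1))/(t_2-t_1)$ and $(p_\e(t_2)-p_\e(t_1))/(t_2-t_1)$. These are functions in $W^{1,1}(\O)$, so Proposition~\ref{relax} (the relaxation result) applies and yields, in the liminf, $\psi_\lambda$ of the \emph{BV trace} of the difference quotient of $u$ together with the total variation of the difference quotient of $p$. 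A second application of Proposition~\ref{relax} as $t_2\to t_1$ then gives the pointwise inequality with the correct trace. The relaxation result is thus the essential ingredient: it is what produces $\psi_\lambda$ evaluated at the BV trace, which your route via $\dot w$ cannot deliver. Your treatment of the plastic dissipation by space--time lower semicontinuity is reasonable in isolation, but the paper couples it to the boundary term through Proposition~\ref{relax}, and that coupling is what makes the argument go through.
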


\begin{proof}
Using the energy balance~\eqref{eq:energy-balance} and  the boundary condition, we have for all $0\le t_1 \le t_2 \le T$
\begin{multline}\label{1st_step_modica}
\frac{1}{2}\|\dot u_\e(t_2)\|_2^2 + \frac12 \|\sigma_\e(t_2)\|_2^2 + \frac{\lambda}{2} \int_{t_1}^{t_2} \int_{\partial \Omega}|(\sigma_\e+\e \nabla \dot u_\e)\cdot \nu-g_\e|^2 \dd \mathcal{H}^{n-1}\ds\\
+\frac{1}{2\lambda} \int_{t_1}^{t_2} \int_{\partial \Omega}|\dot u_\e|^2 \dd \mathcal{H}^{n-1}\ds+ \int_{t_1}^{t_2} \int_\O |\dot p_\e|\dx\ds\\
\le \frac{1}{2}\|\dot u_\e(t_1)\|_2^2 + \frac12 \|\sigma_\e(t_1)\|_2^2 + \int_{t_1}^{t_2} \int_\Omega f \dot u_\e\dx\ds+ \int_{t_1}^{t_2} \int_{\partial \O} g_\e \dot u_\e\dd\HH^{n-1}\ds.
\end{multline}
By Jensen's inequality, we have
\begin{multline*}
\frac{1}{t_2-t_1}\left(\frac{1}{2\lambda} \int_{t_1}^{t_2} \int_{\partial \Omega}|\dot u_\e|^2 \dd \mathcal{H}^{n-1}\ds+ \int_{t_1}^{t_2} \int_\O |\dot p_\e|\dx\ds\right)\\
\geq  \frac{1}{2\lambda} \int_{\partial \Omega} \left(\frac{ u_\e(t_2)- u_\e(t_1)}{t_2-t_1}\right)^2\dd \mathcal{H}^{n-1} + \int_\Omega \left|\frac{ p_\e(t_2)- p_\e(t_1)}{t_2-t_1}\right|\dx,
\end{multline*}
where we used that, as Bochner integrals,
$$\int_{t_1}^{t_2} \dot u_\e(s) \ds= u_\e(t_2)- u_\e(t_1) \quad \text{ in }L^2(\partial \O),$$
and
$$\int_{t_1}^{t_2} \dot p_\e(s)\ds= p_\e(t_2)- p_\e(t_1) \quad \text{ in }L^2(\O;\R^n).$$
Using~\eqref{eq:conv_3} and Proposition \ref{prop:strong_conv}, we know that 
\[
\frac{u_\e (t_2) - u_\e (t_1)}{t_2-t_1} \wto \frac{u(t_2)-u(t_1)}{t_2-t_1} \quad \text{weakly* in } BV(\O),
\]
and
\[
\frac{\sigma_\e (t_2) - \sigma_\e (t_1)}{t_2-t_1} \to \frac{\sigma(t_2)-\sigma(t_1)}{t_2-t_1}\quad\text{ strongly in } L^2(\O;\R^n),
\]
from which we deduce, according to Proposition~\ref{relax}, that
\begin{multline*}
\int_{\partial \Omega} \psi_\lambda \left(\frac{u(t_2)- u(t_1)}{t_2-t_1}\right)\dd \mathcal{H}^{n-1} +  \left|\frac{p(t_2)- p(t_1)}{t_2-t_1}\right|(\O)\\
 \le \liminf_{\e \to 0}\left\{\frac{1}{2\lambda} \int_{\partial \Omega}\! \left(\frac{u_\e(t_2)- u_\e(t_1)}{t_2-t_1}\right)^2\! \!\!\dd \mathcal{H}^{n-1}\! + \!\int_\Omega \left|\frac{p_\e(t_2)- p_\e(t_1)}{t_2-t_1}\right|\! \dx \right\}.
\end{multline*}
Therefore, thanks to the strong convergences results established in Proposition \ref{prop:strong_conv} together with the weak convergence \eqref{weak_cv_sigma_eps_nu_L2_t_L2_dOmega} of the normal trace, it is possible to pass to the (lower) limit in \eqref{1st_step_modica} to get that
\begin{multline}\label{2nd_step_modica}
\frac{1}{2}\frac{\|\dot u(t_2)\|_2^2-\|\dot u(t_1)\|_2^2}{t_2-t_1} + \frac12 \frac{\|\sigma(t_2)\|_2^2- \|\sigma(t_1)\|_2^2}{t_2-t_1}
 + \frac{\lambda}{2(t_2-t_1)} \int_{t_1}^{t_2} \int_{\partial \Omega}|\sigma\cdot \nu|^2\! \dd \mathcal{H}^{n-1}\ds\\
+ \int_{\partial \Omega} \psi_\lambda \left(\frac{u(t_2)- u(t_1)}{t_2-t_1}\right)\dd \mathcal{H}^{n-1} +  \left|\frac{p(t_2)- p(t_1)}{t_2-t_1}\right|(\O)
\le  \frac{1}{t_2-t_1}\int_{t_1}^{t_2} \int_\Omega f \dot u\dx\ds.
\end{multline}
Since $\dot u \in W^{1,\infty}([0,T];L^2(\O))$ and $\sigma  \in W^{1,\infty}([0,T];L^2(\O;\R^n))$, using~\cite[Theorem 1.19]{Barbu}, we obtain that functions $t\mapsto \| \dot u(t) \|^2_2$ and $t\mapsto \| \sigma(t) \|^2_2$ are absolutely continuous, and for a.e. $t \in [0,T]$
\begin{equation}\label{eq:abs_cont}
\frac{\dd }{\dd t} \| \dot u(t) \|^2_2= 2\int_\O\dot u(t) \ddot u(t)\dx, \quad \frac{\dd }{\dd t} \| \sigma(t) \|^2_2= 2\int_\O\sigma(t)\cdot \dot \sigma(t)\dx.
\end{equation}
In addition, since $u\in \mathcal{C}^{0,1}([0,T],BV(\Omega))$ then according to \cite[Theorem 7.1]{DalMasoDeSimoneMora}, for a.e. $t \in [0,T]$,
\[
\frac{u(s)-u(t)}{s-t} \wto \dot u(t) \quad \text{weakly* in }BV(\O)\text{ as }s \to t,
\]
while the fact that $\sigma \in W^{1,\infty}([0,T],L^2(\O;\R^n))$ ensures, according to \cite[Corollaire A.2]{BrezisOperateurs}, that for a.e. $t \in [0,T]$,
\[
\frac{\sigma(s)-\sigma(t)}{s-t} \to \dot \sigma(t) \quad \text{strongly in }L^2(\O;\R^n)\text{ as }s \to t.
\]
Consequently, we can pass to the limit in \eqref{2nd_step_modica} as $t_2 \to t_1=t$. Applying again Proposition \ref{relax} yields, for a.e. $t \in [0,T]$,
\begin{multline}\label{final_modica}
\int_\O \dot u(t) \ddot u(t)\dx+\int_\O\sigma(t)\cdot \dot \sigma(t)\dx+\frac{\lambda}{2} \int_{\partial \O}|\sigma(t)\cdot\nu|^2\dd\HH^{n-1}\\
+\int_{\partial \O} \psi_\lambda (\dot u(t))\dd \mathcal{H}^{n-1} +  |\dot p(t)|(\O)\le   \int_\Omega f(t) \dot u(t)\dx.
\end{multline}
On the one hand, using the equation of motion together with the Definition \ref{def_Sigma_p}
of duality and the integration by parts formula stated in Proposition \ref{IPP_sig_u_varphi} (with $\varphi=1$), infer that for a.e. $t\in [0,T]$,
$$\frac{\lambda}{2} \int_{\partial \O}|\sigma(t)\cdot\nu|^2\dd\HH^{n-1}
+\int_{\partial \O} \psi_\lambda (\dot u(t))\dd \mathcal{H}^{n-1} +  |\dot p(t)|(\O)\le [\sigma(t)\cdot \dot p(t)](\O)-\int_{\partial \O}(\sigma(t)\cdot\nu) \dot u(t)\dd\HH^{n-1}.$$
On the other hand, integrating \eqref{final_modica} between two arbitray times $t_1$ and $t_2$, and using \eqref{eq:abs_cont} yields
\begin{multline*}
\frac{1}{2}\|\dot u(t_2)\|_2^2 + \frac12 \|\sigma(t_2)\|_2^2+\frac{\lambda}{2} \int_{t_1}^{t_2}\int_{\partial \O} |\sigma\cdot\nu|^2\dd\HH^{n-1}\ds\\
 +  \int_{t_1}^{t_2} \int_{\partial \Omega}\psi_\lambda(\dot  u)\dd \mathcal{H}^{n-1}\ds+ \int_{t_1}^{t_2} |\dot p(s)|(\O)\ds\le \frac{1}{2}\|\dot u(t_1)\|_2^2 + \frac12 \|\sigma(t_1)\|_2^2 + \int_{t_1}^{t_2} \int_\Omega f \dot u\dx\ds,
\end{multline*}
which is the announced energy inequality.
\end{proof}

\subsection{Flow rule and boundary condition}
\label{subsec:4.5}

We now show that the previous energy inequality is actually an equality, and as a byproduct, we obtain the flow rule and the boundary condition.

\begin{proposition}
For every $0\le t_1 \le t_2 \le T$,
\begin{multline}\label{nrj}
\frac{1}{2}\|\dot u(t_2)\|_2^2 + \frac12 \|\sigma(t_2)\|_2^2+\frac{\lambda}{2} \int_{t_1}^{t_2}\int_{\partial \O} |\sigma\cdot\nu|^2\dd\HH^{n-1}\ds
 +  \int_{t_1}^{t_2} \int_{\partial \Omega}\psi_\lambda(\dot  u)\dd \mathcal{H}^{n-1}\ds+ \int_{t_1}^{t_2} |\dot p(s)|(\O)\ds\\
=\frac{1}{2}\|\dot u(t_1)\|_2^2 + \frac12 \|\sigma(t_1)\|_2^2 + \int_{t_1}^{t_2} \int_\Omega f \dot u\dx\ds.
\end{multline}
In addition, 
$$\sigma\cdot \nu+\psi'_\lambda(\dot u)=0 \quad \text{ in }L^2(0,T;L^2(\partial \O)),$$
and, for a.e. $t \in [0,T]$,
$$|\dot p(t)|=[\sigma(t)\cdot \dot p(t)] \quad \text{ in }\mathcal M(\O).$$
\end{proposition}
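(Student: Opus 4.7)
The lower bound in \eqref{nrj} is already contained in Proposition~\ref{prop:nrjineq}; the task is to derive the matching upper bound. My strategy would be to upgrade the pointwise-in-time inequality of Proposition~\ref{prop:nrjineq} to an equality by testing the equation of motion against $\dot u$ and exploiting the generalized stress/strain duality.

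For almost every $t\in[0,T]$, the additive decomposition $Du(t)=\sigma(t)+p(t)$ together with the Lipschitz-in-time regularities $u\in\mathcal C^{0,1}([0,T];BV(\O))$, $\sigma\in W^{1,\infty}([0,T];L^2)$ and $p\in\mathcal C^{0,1}([0,T];\mathcal M)$ allows one to differentiate in time and deduce that $\dot u(t)\in BV(\O)\cap L^2(\O)$ with $D\dot u(t)=\dot\sigma(t)+\dot p(t)$ in $\mathcal M(\O;\R^n)$. Multiplying the equation of motion $\ddot u-\Div\sigma=f$ by $\dot u(t)$, integrating over $\O$, and using Definition~\ref{def_Sigma_p} together with the integration by parts formula of Proposition~\ref{IPP_sig_u_varphi} applied with $\varphi=1$ gives the identity
\begin{equation*}
\int_\O \dot u(t)\ddot u(t)\dx+\int_\O \sigma(t)\cdot\dot\sigma(t)\dx+[\sigma(t)\cdot\dot p(t)](\O)-\int_{\partial\O}(\sigma(t)\cdot\nu)\dot u(t)\dd\HH^{n-1}=\int_\O f(t)\dot u(t)\dx.
\end{equation*}

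Subtracting this equality from the second (pointwise-in-time) inequality of Proposition~\ref{prop:nrjineq} yields
\begin{equation*}
\Big(|\dot p(t)|(\O)-[\sigma(t)\cdot\dot p(t)](\O)\Big)+\int_{\partial\O}\!\left(\tfrac{\lambda}{2}|\sigma(t)\cdot\nu|^2+\psi_\lambda(\dot u(t))+(\sigma(t)\cdot\nu)\dot u(t)\right)\dd\HH^{n-1}\le 0.
\end{equation*}
Both terms are non-negative: the first because, by the stress constraint $|\sigma(t)|\leq 1$ and Remark~\ref{rmk:ineq1}, $[\sigma(t)\cdot\dot p(t)]\leq|\dot p(t)|$ as measures; the second by Young's inequality $\psi_\lambda(a)+\psi_\lambda^*(b)\geq ab$ applied with $a=\dot u$ and $b=-\sigma\cdot\nu$, noting that $|\sigma\cdot\nu|\leq\|\sigma\|_\infty\leq 1$ a.e. on $\partial\O$ kills the indicator part of $\psi_\lambda^*$ in~\eqref{psi*}. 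Consequently each summand must vanish.

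The first vanishing produces a non-negative measure of zero total mass, hence the flow rule $|\dot p(t)|=[\sigma(t)\cdot\dot p(t)]$ in $\mathcal M(\O)$. The second forces pointwise equality in Young's inequality, which, since $\psi_\lambda$ is of class $\mathcal C^1$ (see Remark~\ref{rem:psi}), translates into $-\sigma\cdot\nu=\psi_\lambda'(\dot u)$ a.e. on $\partial\O\times(0,T)$; the $L^2$-regularity is immediate from $\|\psi_\lambda'\|_\infty\leq 1$ and $\sigma\cdot\nu\in L^2(0,T;L^2(\partial\O))$ via~\eqref{weak_cv_sigma_eps_nu_L2_t_L2_dOmega}. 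Plugging these identities back into the pointwise-in-time inequality turns it into an equality; combined with the absolute continuity relations~\eqref{eq:abs_cont}, integration between arbitrary times $t_1\leq t_2$ delivers the energy equality~\eqref{nrj}.

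The main technical delicacy I expect is justifying the differentiability in time of the additive decomposition so that $D\dot u(t)=\dot\sigma(t)+\dot p(t)$ may be used for a.e. $t$ to legitimately apply Proposition~\ref{IPP_sig_u_varphi}; this rests on the joint Lipschitz regularity already obtained for $(u,\sigma,p)$ in Subsection~\ref{subsec:4.1}, together with the elementary measure-theoretic fact that the time derivative of a Lipschitz curve with values in $\mathcal M(\O;\R^n)$ commutes with the spatial distributional gradient.
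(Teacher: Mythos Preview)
Your proposal is correct and follows essentially the same route as the paper: both arguments combine the pointwise-in-time inequality of Proposition~\ref{prop:nrjineq} with the two elementary lower bounds $[\sigma\cdot\dot p]\leq|\dot p|$ (Remark~\ref{rmk:ineq1}) and $\psi_\lambda(\dot u)+\tfrac{\lambda}{2}|\sigma\cdot\nu|^2\geq -(\sigma\cdot\nu)\dot u$ (Fenchel--Young for $\psi_\lambda$) to force both to be equalities. One small redundancy: the identity you obtain by testing the equation of motion against $\dot u$ is exactly what was already used in proving the second conclusion of Proposition~\ref{prop:nrjineq}, so no subtraction is needed---that inequality rearranges directly to your displayed $\leq 0$ line; the paper merely orders the steps differently (first establishing the integrated energy equality, then differentiating it back in time).
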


\begin{proof}
Deriving the additive decomposition with respect to time yields, for a.e. $t \in [0,T]$,
$$D\dot u(t)=\dot \sigma(t)+\dot p(t) \quad \text{ in }\mathcal M(\O;\R^n),$$
where $\dot u(t) \in BV(\O) \cap L^2(\O)$, $\dot \sigma(t) \in L^2(\Omega;\R^n)$ and $\dot p(t) \in \mathcal M(\Omega;\R^n)$. Moreover, since for all $t \in [0,T]$, $\sigma(t) \in H(\Div,\O)$ and $\|\sigma(t)\|_\infty \leq 1$, we get from Remark \ref{rmk:ineq1} that for a.e. $t \in [0,T]$,
\begin{equation}\label{eq:fr0}
|\dot p(t)| \geq  [\sigma(t)\cdot \dot p(t)] \quad \text{ in }\mathcal M(\Omega),
\end{equation}
and in particular
\begin{equation}\label{eq:fr1}
|\dot p(t)|(\O) \geq [\sigma(t)\cdot\dot p(t)](\O).
\end{equation}

On the other hand, since $\sigma(t) \in H(\Div,\O) \cap L^\infty(\O;\R^n)$ with $\|\sigma(t)\|_\infty\leq 1$, we infer that $\sigma(t)\cdot \nu\in L^\infty(\partial \O)$ with $\|\sigma(t)\cdot\nu\|_{L^\infty(\partial \O)}\leq 1$. As a consequence, for all $t \in [0,T]$
\begin{equation}\label{eq:fr4}
\psi_\lambda(\dot u(t)) + \frac{\lambda}{2}|\sigma(t)\cdot \nu|^2 \geq -(\sigma(t)\cdot \nu) \dot u(t)  \quad \HH^{n-1}\text{-a.e. on }\partial \O,
\end{equation}
or still
\begin{equation}\label{eq:fr2}
\int_{\partial \O} \psi_\lambda(\dot u(t))\dd\HH^{n-1} + \frac{\lambda}{2}\int_{\partial \O}|\sigma(t)\cdot \nu|^2\dd\HH^{n-1} \geq -\int_{\partial \O} (\sigma(t)\cdot \nu) \dot u(t)\dd\HH^{n-1}.
\end{equation}
Summing up \eqref{eq:fr1} and \eqref{eq:fr2}, using Definition \ref{def_Sigma_p} of duality together with the integration by parts formula given by Proposition \ref{IPP_sig_u_varphi} and the equation of motion yields, for a.e. $t \in [0,T]$,
\begin{multline*}
|\dot p(t)|(\O)+\int_{\partial \O} \psi_\lambda(\dot u(t))\dd\HH^{n-1} + \frac{\lambda}{2}\int_{\partial \O}|\sigma(t)\cdot \nu|^2\dd\HH^{n-1} \\
\geq \int_\O f(t)\dot u(t)\dx-\int_{\Omega} \ddot u(t) \dot u(t) \dx - \int_{\Omega} \dot \sigma(t)\cdot \sigma(t)\dx.
\end{multline*}
Thanks to \eqref{eq:abs_cont} together with an integration between two arbitrary times $0 \leq t_1 \leq t_2 \leq T$ gives the converse energy inequality
\begin{multline*}
\frac{1}{2}\|\dot u(t_2)\|_2^2 + \frac12 \|\sigma(t_2)\|_2^2+\frac{\lambda}{2} \int_{t_1}^{t_2}\int_{\partial \O} |\sigma\cdot\nu|^2\dd\HH^{n-1}\ds +  \int_{t_1}^{t_2} \int_{\partial \Omega}\psi_\lambda(\dot  u)\dd \mathcal{H}^{n-1}\ds+ \int_{t_1}^{t_2} |\dot p(s)|(\O)\ds\\
\geq \frac{1}{2}\|\dot u(t_1)\|_2^2 + \frac12 \|\sigma(t_1)\|_2^2 + \int_{t_1}^{t_2} \int_\Omega f \dot u\dx\ds
\end{multline*}
which gives the energy equality \eqref{nrj} according to Proposition \ref{prop:nrjineq}.

We now derive the energy balance \eqref{nrj} with respect to time. It follows that for a.e. $t\in [0,T]$,
\begin{multline*}
|\dot p(t)|(\O)+\int_{\partial \O} \psi_\lambda(\dot u(t))\dd\HH^{n-1} + \frac{\lambda}{2}\int_{\partial \O}|\sigma(t)\cdot \nu|^2\dd\HH^{n-1} \\
= \int_\O f(t)\dot u(t)\dx-\int_{\Omega} \ddot u(t) \dot u(t) \dx - \int_{\Omega} \dot \sigma(t)\cdot \sigma(t)\dx,
\end{multline*}
or still, thanks to the equation of motion, Definition \ref{def_Sigma_p} of duality and the integration by parts formula given by Proposition \ref{IPP_sig_u_varphi},
\begin{multline*}
|\dot p(t)|(\O)+\int_{\partial \O} \psi_\lambda(\dot u(t))\dd\HH^{n-1} + \frac{\lambda}{2}\int_{\partial \O}|\sigma(t)\cdot \nu|^2\dd\HH^{n-1} \\
= [\sigma(t)\cdot\dot p(t)](\O) -\int_{\partial \O}(\sigma(t)\cdot \nu) \dot u(t)  \dd\HH^{n-1}.
\end{multline*}
Remembering \eqref{eq:fr1} and \eqref{eq:fr2} implies that
$$|\dot p(t)|(\O) = [\sigma(t)\cdot\dot p(t)](\O),$$
and
$$\int_{\partial \O} \psi_\lambda(\dot u(t))\dd\HH^{n-1} + \frac{\lambda}{2}\int_{\partial \O}|\sigma(t)\cdot \nu|^2\dd\HH^{n-1} = -\int_{\partial \O} (\sigma(t)\cdot \nu) \dot u(t)\dd\HH^{n-1},$$
and by \eqref{eq:fr0} and \eqref{eq:fr4}, 
$$|\dot p(t)| =  [\sigma(t)\cdot \dot p(t)] \quad \text{ in }\mathcal M(\Omega),$$
as well as
$$\psi_\lambda(\dot u(t)) + \frac{\lambda}{2}|\sigma(t)\cdot \nu|^2 = - (\sigma(t)\cdot \nu) \dot u(t) \quad \HH^{n-1}\text{-a.e. on }\partial \O.$$
By \eqref{psi*} and standard arguments of convex analysis, this last formula is then equivalent to the boundary condition
$\sigma(t)\cdot \nu+\psi'_\lambda(\dot u(t))=0$ $\HH^{n-1}$-a.e. on $\partial \O$.
\end{proof}

\begin{remark}
According to the boundary condition at fixed $\e>0$ and the convergence \eqref{weak_cv_sigma_eps_nu_L2_t_L2_dOmega} of the normal stress, we have that, as $\e \to 0$, $\dot u_\e=\lambda g_\e-\lambda (\sigma_\e +\e\nabla \dot u_\e)\cdot \nu \wto - \lambda \sigma\cdot \nu=\lambda \psi'_\lambda(\dot u)$ 
weakly in $L^2(0,T;L^2(\partial \O))$. It enables one to identify the function $w$ (the limit of the trace of $u_\e$ in \eqref{eq:conv_0}) as $\dot w=\lambda \psi'_\lambda(\dot u)$, and by Proposition \ref{prop:strong_conv},
\begin{equation}\label{eq:strong_conv5}
\dot u_\e \to  \lambda \psi'_\lambda(\dot u) \quad\text{strongly in } L^2(0,T;L^2(\partial \Omega)).
\end{equation}
\end{remark}

\subsection{Uniqueness}
\label{subsec:4.8}

In order to establish the uniqueness, we derive a general comparison principle between two solutions which, in the context of hyperbolic equations, is known as a {\it Kato inequality}. This result is much more than needed in order to establish uniqueness. However, it will be useful later to show regularity results for the variational solution of the elasto-plastic problem (see Section~\ref{sec:5}).

\begin{proposition}[Kato inequality]
\label{prop_kato_ineq}
Let $(u,\sigma,p)$ (resp. $(\tilde{u},\tilde{\sigma},\tilde{p})$) be a variational solution of the elasto-plastic problem associated to the initial condition $(u_0,v_0,\sigma_0,p_0)$ (resp. $(\tilde u_0,\tilde v_0 ,\tilde \sigma_0 ,\tilde p_0)$) and the source term $f$ (resp. $\tilde{f}$). Then for all $\varphi \in W^{1,\infty}(\O \times (0,T))$ with $\varphi \geq 0$,
\begin{multline}
\label{kato_ineq_eps_zero}
\int_0^T \int_{\Omega} (\dot u -\dot {\tilde u})^2\dot \varphi \dxdt + \int_0^T \int_{\Omega} |\sigma -\tilde{\sigma}|^2\dot \varphi \dxdt +\int_{\Omega} (v_0 -\tilde v_0 )^2 \varphi(0) \dx + \int_{\Omega} |\sigma_0 -\tilde \sigma_0 |^2\varphi(0) \dx \\
	 - 2\int_0^T \int_{\Omega} (\sigma -\tilde{\sigma}) \cdot \nabla \varphi(\dot u -\dot{ \tilde u})\dxdt  + 2 \int_0^T \int_{\Omega} (f-\tilde{f} )(\dot u -\dot{ \tilde u})\varphi \dxdt\\
	  \geq 2\lambda\int_0^T\int_{\partial \O} \big(\psi'_\lambda(\dot u)-\psi'_\lambda(\dot{\tilde u}) \big)^2\varphi\dd\HH^{n-1}\dt.
\end{multline}
\end{proposition}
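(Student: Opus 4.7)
The plan is to establish the inequality first at the level of the elasto-visco-plastic regularization, where the solutions are classical enough for elementary manipulations, and then to pass to the limit $\e \to 0$ using the compactness results already gathered in this section. Let $(u_\e,\sigma_\e,p_\e)$ and $(\tilde u_\e,\tilde \sigma_\e,\tilde p_\e)$ denote the two elasto-visco-plastic solutions associated by Theorem~\ref{final_theorem_at_level_epsilon} to the data $(u_0,v_0,\sigma_0,p_0,f)$ and $(\tilde u_0,\tilde v_0,\tilde \sigma_0,\tilde p_0,\tilde f)$, with lifting terms $g_\e = \e\nabla v_0\cdot\nu$ and $\tilde g_\e = \e\nabla\tilde v_0\cdot\nu$. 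Subtracting the two equations of motion, multiplying by the test function $(\dot u_\e - \dot{\tilde u}_\e)\varphi \in L^2(0,T;H^1(\Omega))$, integrating by parts in space and invoking the dissipative boundary condition \eqref{eq:bdry_cond_eps}, one obtains an exact $\e$-level identity involving $\iint (\ddot u_\e - \ddot{\tilde u}_\e)(\dot u_\e - \dot{\tilde u}_\e)\varphi$, the bulk term $\iint [(\sigma_\e - \tilde\sigma_\e) + \e\nabla(\dot u_\e - \dot{\tilde u}_\e)]\cdot\nabla[(\dot u_\e - \dot{\tilde u}_\e)\varphi]$ and the boundary contribution $\lambda^{-1}\iint_{\partial\Omega}(\dot u_\e - \dot{\tilde u}_\e)^2\varphi$, balanced by the source and $g_\e$-terms on the right.

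The second step is to decompose the bulk stress integrand by means of the additive decomposition \eqref{decomposition_nablau_eps}, i.e. $\nabla(\dot u_\e - \dot{\tilde u}_\e) = (\dot\sigma_\e - \dot{\tilde\sigma}_\e) + (\dot p_\e - \dot{\tilde p}_\e)$. The $\dot\sigma$-piece contributes $\frac12 \tfrac{d}{dt}|\sigma_\e - \tilde\sigma_\e|^2$, which after an integration by parts in time produces precisely $-\frac12 \iint|\sigma_\e - \tilde\sigma_\e|^2\dot\varphi - \frac12\int |\sigma_0 - \tilde\sigma_0|^2\varphi(0)$ together with a non-negative term at $t=T$. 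The $\dot p$-piece is non-negative thanks to the monotonicity of $\sigma\mapsto(\sigma - P_B(\sigma))/\e$ combined with the Perzyna flow rule \eqref{snd_eq_continuous}. Integrating the inertial term by parts in time similarly yields $-\frac12\iint(\dot u_\e - \dot{\tilde u}_\e)^2\dot\varphi - \frac12\int(v_0 - \tilde v_0)^2\varphi(0)$ plus another non-negative term at $t=T$, and the diagonal viscous contribution $\e\iint|\nabla(\dot u_\e - \dot{\tilde u}_\e)|^2\varphi$ is itself non-negative. Dropping these four non-negative terms and multiplying by $2$ yields an $\e$-level version of \eqref{kato_ineq_eps_zero}, the only spurious extra contributions being the boundary integral $2\iint_{\partial\Omega}(g_\e - \tilde g_\e)(\dot u_\e - \dot{\tilde u}_\e)\varphi$ and the viscous cross-term $-2\e\iint(\dot u_\e - \dot{\tilde u}_\e)\nabla(\dot u_\e - \dot{\tilde u}_\e)\cdot \nabla\varphi$ appearing on the left.

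Finally one lets $\e \to 0$. The strong convergences of Proposition~\ref{prop:strong_conv} dispose of all the bulk terms; the viscous cross-term is $O(\sqrt\e)$ thanks to $\|\sqrt\e \nabla \dot u_\e\|_2 = O(1)$ and $\|\dot u_\e\|_\infty = O(1)$, while $\|g_\e - \tilde g_\e\|_{L^2(\partial\Omega)} = O(\e)$ kills the lifting boundary contribution. The delicate point is the identification of the limit of $\lambda^{-1}\iint_{\partial\Omega}(\dot u_\e - \dot{\tilde u}_\e)^2\varphi$: the strong trace convergence $\dot u_\e \to \lambda \psi'_\lambda(\dot u)$ in $L^2(0,T;L^2(\partial\Omega))$ recorded in \eqref{eq:strong_conv5} (and its analogue for the tilde family) identifies this limit as $\lambda\iint_{\partial\Omega}(\psi'_\lambda(\dot u) - \psi'_\lambda(\dot{\tilde u}))^2\varphi$, which upon multiplying by $2$ yields exactly the right hand side of \eqref{kato_ineq_eps_zero}.

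The main technical obstacle is this last trace passage: the trace operator is not weakly$^*$ continuous on $BV(\Omega)$, so the linear penalty $\lambda^{-1}(\dot u_\e - \dot{\tilde u}_\e)$ cannot be passed to the limit by soft arguments. It is precisely the nonlinearity of $\psi'_\lambda$, reflecting the relaxation phenomenon of Proposition~\ref{relax}, that forces one to rely on the strong $L^2$-convergence of the boundary velocities obtained through the vanishing viscosity analysis. Once this step is in place, all other terms pass to the limit in a routine way and the Kato inequality \eqref{kato_ineq_eps_zero} follows.
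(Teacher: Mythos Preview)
Your proposal is correct and follows essentially the same route as the paper: work at the visco-plastic level with test function $(\dot u_\e-\dot{\tilde u}_\e)\varphi$, use the additive decomposition and the monotonicity coming from the Perzyna flow rule to drop the $\dot p$-term, integrate by parts in time, discard the non-negative terms at $t=T$ and the viscous diagonal term, and then pass to the limit via Proposition~\ref{prop:strong_conv} and \eqref{eq:strong_conv5}. One small slip: in bounding the viscous cross-term you write $\|\dot u_\e\|_\infty=O(1)$, but what you actually have (and need) is the $L^\infty(0,T;L^2(\Omega))$ bound on $\dot u_\e$, which together with $\|\sqrt\e\,\nabla\dot u_\e\|_2=O(1)$ and Cauchy--Schwarz still gives the $O(\sqrt\e)$ estimate.
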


\begin{proof}
For fixed $\e >0$, let $(u_\e,\sigma_\e,p_\e)$ (resp. $(\tilde u_\e,\tilde\sigma_\e,\tilde p_\e)$) be the solution of the elasto-visco-plastic problem given by Theorem \ref{final_theorem_at_level_epsilon} for the initial condition $(u_{0},v_{0},\sigma_0,p_{0})$ (resp. $(\tilde u_{0},\tilde v_{0},\tilde \sigma_0,\tilde p_{0})$) and the source terms $f$ and $g_\e:=\e\nabla v_0\cdot\nu$ (resp. $\tilde f$ and $\tilde g_\e:=\e\nabla \tilde v_0\cdot\nu$). According to Remark \ref{rem:wf} with the test function $(\dot u_\e - \dot{\tilde u}_\e) \varphi \in L^2(0,T;H^1(\O))$, we infer that
\begin{multline*}
\int_0^T \int_\Omega \ddot u_\e \left(\dot u_\e - \dot{\tilde u}_\e \right) \varphi\dxdt + \int_0^T \int_\Omega (\sigma_\e+\e \nabla\dot u_\e) \cdot \nabla\big( (\dot u_\e - \dot{ \tilde u}_\e) \varphi  \big)\dxdt \\
+ \frac{1}{\lambda} \int_0^T \int_{\partial \Omega} \dot u_\e \left(\dot u_\e - \dot{ \tilde u}_\e \right) \varphi\dd \mathcal{H}^{n-1} \dt\\
 =  \int_0^T \int_\Omega f\left(\dot u_\e - \dot{ \tilde u}_\e \right) \varphi\dxdt+\int_0^T \int_{\partial \O} g_\e \left(\dot u_\e - \dot{ \tilde u}_\e \right) \varphi\dd\HH^{n-1}\dt,
\end{multline*}
and similarly,
\begin{multline*}
\int_0^T \int_\Omega \ddot{\tilde u}_\e \left(\dot {\tilde u}_\e - \dot{u}_\e \right) \varphi\dxdt + \int_0^T \int_\Omega ({\tilde \sigma}_\e+\e \nabla\dot{\tilde  u}_\e) \cdot \nabla\big( (\dot{\tilde u}_\e - \dot{u}_\e) \varphi \big)\dxdt \\
+ \frac{1}{\lambda} \int_0^T \int_{\partial \Omega} \dot {\tilde u}_\e (\dot{\tilde u}_\e - \dot{u}_\e ) \varphi\dd \mathcal{H}^{n-1} \dt\\
 =  \int_0^T \int_\Omega {\tilde f}(\dot {\tilde u}_\e - \dot{u}_\e ) \varphi\dxdt+\int_0^T \int_{\partial \O} {\tilde g}_\e(\dot {\tilde u}_\e - \dot{u}_\e ) \varphi\dd\HH^{n-1}\dt.
\end{multline*}
Summing up both previous equalities leads to 
\begin{multline}\label{eq:ueps1}
\int_0^T \int_\Omega (\ddot u_\e-\ddot{\tilde u}_\e) (\dot u_\e - \dot{\tilde u}_\e) \varphi\dxdt 
+ \int_0^T \int_\Omega \big(\sigma_\e-\tilde\sigma_\e+\e (\nabla\dot u_\e-\nabla \dot{ \tilde u}_\e)\big) \cdot \nabla\big( (\dot u_\e - \dot{ \tilde u}_\e) \varphi\big)\dxdt \\
+ \frac{1}{\lambda}\int_0^T \int_{\partial \Omega} (\dot u_\e-\dot {\tilde u}_\e)^2 \varphi\dd \mathcal{H}^{n-1} \dt\\
   =  \int_0^T \int_\Omega (f-{\tilde f})(\dot u_\e - \dot{ \tilde u}_\e ) \varphi\dxdt+\int_0^T \int_{\partial \O} (g_\e-\tilde g_\e) \left(\dot u_\e - \dot{ \tilde u}_\e \right) \varphi\dd\HH^{n-1}\dt.
\end{multline}
Using the additive decompositon \eqref{decomposition_nablau_eps}, we get that
\begin{multline}\label{eq:ueps2}
 \int_0^T \int_\Omega \big(\sigma_\e-\tilde\sigma_\e+\e (\nabla\dot u_\e-\nabla \dot{ \tilde u}_\e)\big) \cdot \nabla\big( (\dot u_\e - \dot{ \tilde u}_\e) \varphi\big)\dxdt \\
=  \int_0^T \int_\Omega (\sigma_\e-\tilde \sigma_\e) \cdot (\dot \sigma_\e-\dot{\tilde \sigma}_\e)\varphi\dxdt  + \int_0^T \int_\Omega (\sigma_\e-\tilde\sigma_\e) \cdot  (\dot p_\e-\dot{\tilde p}_\e)\varphi\dxdt 
 + \e \int_0^T \int_\Omega \left|\nabla\dot u_\e-\nabla \dot {\tilde u}_\e \right|^2 \varphi\dxdt \\
 + \int_0^T  \int_\Omega (\sigma_\e-\tilde\sigma_\e)\cdot \nabla \varphi\left(\dot u_\e - \dot{ \tilde u}_\e \right)\dxdt + \e \int_0^T  \int_\Omega (\nabla\dot u_\e-\nabla \dot{ \tilde u}_\e) \cdot \nabla \varphi (\dot u_\e-\dot{ \tilde u}_\e)\dxdt . 
\end{multline}
Observe that, thanks to the flow rule \eqref{snd_eq_continuous} the second term of the right hand side is non-negative. Consequently, gathering \eqref{eq:ueps1} and \eqref{eq:ueps2}, we obtain that
\begin{multline}\label{eq:ueps3}
\frac{1}{\lambda} \int_0^T \int_{\partial \Omega} (\dot u_\e-\dot {\tilde u}_\e)^2 \varphi\dd \mathcal{H}^{n-1} \dt\\
\le -\int_0^T \int_\Omega (\ddot u_\e-\ddot{\tilde u}_\e) (\dot u_\e - \dot{\tilde u}_\e ) \varphi\dxdt
 -  \int_0^T \int_\Omega (\sigma_\e-\tilde \sigma_\e) \cdot (\dot \sigma_\e-\dot{\tilde \sigma}_\e)\varphi\dxdt \\
 - \int_0^T  \int_\Omega (\sigma_\e-\tilde\sigma_\e)\cdot \nabla \varphi\left(\dot u_\e - \dot{ \tilde u}_\e \right)\dxdt+ \int_0^T \int_\Omega (f-{\tilde f})(\dot u_\e - \dot{ \tilde u}_\e ) \varphi\dxdt\\
+\int_0^T \int_{\partial \O} (g_\e-\tilde g_\e) \left(\dot u_\e - \dot{ \tilde u}_\e \right) \varphi\dd\HH^{n-1}\dt- \e \int_0^T  \int_\Omega (\nabla\dot u_\e-\nabla \dot{ \tilde u}_\e) \cdot \nabla \varphi (\dot u_\e-\dot{ \tilde u}_\e)\dxdt.
\end{multline}
Using next that both functions $\dot u_\e - \dot{\tilde u}_\e\in W^{1,\infty}([0,T];L^2(\O))$ and $\sigma_\e -\tilde \sigma_\e\in W^{1,\infty}([0,T];L^2(\O;\R^n))$, we can integrate by parts with respect to the time variable to get that
\begin{multline*}
- \int_0^T \int_\Omega (\ddot u_\e-\ddot{\tilde u}_\e) (\dot u_\e - \dot{\tilde u}_\e ) \varphi\dxdt -  \int_0^T \int_\Omega (\sigma_\e-\tilde \sigma_\e) \cdot (\dot \sigma_\e-\dot{\tilde \sigma}_\e)\varphi\dxdt  \\
= \frac{1}{2}\int_0^T \int_\Omega (\dot u_\e - \dot{ \tilde u}_\e )^2 \dot \varphi\dxdt + \frac{1}{2} \int_0^T \int_\Omega |\sigma_\e-\tilde \sigma_\e|^2 \dot \varphi\dxdt \\
 + \frac{1}{2}\int_\Omega (v_{0} - \tilde v_{0} )^2 \varphi(0)\dx -\frac{1}{2}\int_\Omega (\dot u_\e(T) - \dot{ \tilde u}_\e(T) )^2 \varphi(T)\dx\\
 + \frac{1}{2}\int_\Omega |\sigma_0-\tilde \sigma_0 |^2 \varphi(0)\dx - \frac{1}{2}\int_\Omega |\sigma_\e(T)-\tilde \sigma_\e(T)|^2 \varphi(T)\dx,
\end{multline*}
and inserting inside \eqref{eq:ueps3} leads to
\begin{multline*}
\frac{1}{\lambda} \int_0^T \int_{\partial \Omega} (\dot u_\e-\dot {\tilde u}_\e)^2 \varphi\dd \mathcal{H}^{n-1} \dt\le  \frac{1}{2}\int_0^T \int_\Omega (\dot u_\e - \dot{ \tilde u}_\e )^2 \dot \varphi\dxdt + \frac{1}{2} \int_0^T \int_\Omega |\sigma_\e-\tilde \sigma_\e|^2\dot \varphi\dxdt \\
 + \frac{1}{2}\int_\Omega (v_{0} - \tilde v_{0} )^2 \varphi(0)\dx + \frac{1}{2}\int_\Omega |\sigma_0-\tilde \sigma_0 |^2 \varphi(0)\dx  - \int_0^T  \int_\Omega (\sigma_\e-\tilde\sigma_\e)\cdot \nabla \varphi\left(\dot u_\e - \dot{ \tilde u}_\e \right)\dxdt\\
 + \int_0^T \int_\Omega (f-{\tilde f})(\dot u_\e - \dot{ \tilde u}_\e ) \varphi\dxdt +\int_0^T \int_{\partial \O} (g_\e-\tilde g_\e) \left(\dot u_\e - \dot{ \tilde u}_\e \right) \varphi\dd\HH^{n-1}\dt\\
 - \e \int_0^T  \int_\Omega (\nabla\dot u_\e-\nabla \dot{ \tilde u}_\e) \cdot \nabla \varphi (\dot u_\e-\dot{ \tilde u}_\e)\dxdt.
\end{multline*}
Using  Proposition \ref{prop:strong_conv} and \eqref{eq:strong_conv5}, and letting $ \e \to 0$ in the previous inequality, we get \eqref{kato_ineq_eps_zero}.
\end{proof}

As a consequence of Proposition \ref{prop_kato_ineq}, if $(u_1,\sigma_1,p_1)$ and $(u_2,\sigma_2,p_2)$ are two variational solutions of the elasto-plastic problem given by Theorem~\ref{final_theorem_eps_eq_zero} for the same initial data $(u_0,v_0,e_0,p_0)$ and source term $f$, taking $\varphi(x,t):=T-t$ as test function in \eqref{kato_ineq_eps_zero} implies that $\sigma_1=\sigma_2$ and $\dot u_1=\dot u_2$. Consequently, since $u_1(0)=u_2(0)=u_0$, we deduce that $u_1=u_2$, and using the additive decomposition, that $p_1=p_2$.

\begin{remark}
The uniqueness of the solution shows that there is no need to extract subsequences in all weak and strong convergences obtained before.
\end{remark}

\subsection{Homogeneous Dirichlet and Neumann boundary conditions}\label{sec:lambda}

The boundary condition studied so far does not account for the important Dirichlet and Neumann cases. It is however possible to recover these particular situations by means of asymptotic analysis as the coefficient $\lambda \to 0^+$ for the Dirichlet, or $\lambda \to +\infty$ for the Neumann case. The object of this section is to show such rigorous convergence results. 

\begin{theorem}
\label{DN}
Let $\Omega\subset \R^n$ be a bounded open set of class $\mathcal C^1$. Consider a source term $f \in H^1([0,T];L^2(\Omega))$ and an initial data $(u_0,v_0,\sigma_0,p_0) \in H^1(\Omega)  \times H^2(\Omega) \times H(\Div,\O)\times  L^2(\Omega;\R^n) $ such that
\begin{equation}
\label{initial_data_DirichletNeumann}
\begin{cases}
\nabla u_0 = \sigma_0 + p_0 \text{ in }L^2(\O;\R^n),\\
|\sigma_0|\leq 1 \text{ a.e. in }\Omega,
\end{cases}
\end{equation}
and 
\begin{equation}\label{eq:ell}
\sigma_0\cdot \nu = v_0 = 0 \quad \HH^{n-1}\text{-a.e. on } \partial \Omega.
\end{equation}
For any $\lambda>0$, let $(u_\lambda,\sigma_\lambda,p_\lambda)$ the unique variational solution to the elasto-plastic problem given by Theorem \ref{final_theorem_eps_eq_zero}. For $\ell \in \{0,+\infty\}$, there exist a unique triple $(u^{(\ell)},\sigma^{(\ell)},p^{(\ell)})$ with the regularity
\begin{equation}\label{eq:reg}
\begin{cases}
u^{(\ell)}\in W^{2,\infty}([0,T];L^2(\Omega)) \cap \mathcal C^{0,1}([0,T];BV(\Omega)),\\
\sigma^{(\ell)} \in W^{1,\infty}([0,T];L^2(\Omega;\R^n)),\\
p^{(\ell)} \in  \mathcal C^{0,1}([0,T];\mathcal M(\Omega;\R^n)),
\end{cases}
\end{equation}
such that, as $\lambda \to \ell$,
\begin{equation}\label{eq:conv_11}
\begin{cases}
u_\lambda \wto u^{(\ell)} &\text{weakly* in } W^{2,\infty}([0,T];L^2(\O)), \\
\sigma_\lambda \wto \sigma^{(\ell)} &\text{weakly* in } W^{1,\infty}([0,T];L^2(\O;\R^n)),\\
p_\lambda(t) \wto p^{(\ell)}(t) &\text{weakly* in }\mathcal M(\O;\R^n) \text{ for all }t \in [0,T],
\end{cases}
\end{equation}
and which satisfies the following properties:

\begin{enumerate}
\item The initial conditions:
\[
u^{(\ell)}(0)   =  u_0, \quad \dot  u^{(\ell)}(0) = v_0, \quad 
\sigma^{(\ell)}(0)   =  \sigma_0, \quad p^{(\ell)}(0)   =  p_0;
\]
\item The additive decomposition: for all $t \in [0,T]$
\[
D u^{(\ell)}(t) = \sigma^{(\ell)}(t) + p^{(\ell)}(t)\quad \text{in }\mathcal M(\O;\R^n);
\]
\item The equation of motion: 
$$\ddot u^{(\ell)} - \Div\sigma^{(\ell)} = f \quad  \text{ in } L^2(0,T;L^2(\Omega));$$
\item The boundary condition:
$$
\sigma^{(\infty)}\cdot \nu=0 \quad \text{ in } L^2(0,T;L^2(\partial \Omega));$$
\item The stress constraint: for all $t \in [0,T]$,
$$|\sigma^{(\ell)}(t)|\leq 1 \quad \text{ a.e. in }\O;$$
\item The flow rule: for a.e. $t \in [0,T]$,
$$
\begin{cases}
|\dot p^{(0)}(t)|=[\sigma^{(0)}(t) \cdot \dot p^{(0)}(t)] & \text{in }\mathcal M(\O),\\
|\dot u^{(0)}(t)|=-(\sigma^{(0)}(t)\cdot \nu) \dot u^{(0)}(t) & \HH^{n-1}\text{-a.e. on }\partial \O,
\end{cases}$$
or
$$|\dot p^{(\infty)}(t)|=[\sigma^{(\infty)}(t) \cdot \dot p^{(\infty)}(t)] \quad \text{ in }\mathcal M(\O).$$
\end{enumerate}
\end{theorem}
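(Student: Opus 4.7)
The plan is to mimic the vanishing viscosity procedure of Section \ref{sec:4}, but now letting the boundary parameter $\lambda$ tend to $\ell \in \{0,+\infty\}$ instead of $\e$ tending to $0$. Observe that the compatibility condition $\sigma_0\cdot\nu + \lambda^{-1}v_0 = 0$ required by Theorem \ref{final_theorem_eps_eq_zero} is automatically satisfied for every $\lambda>0$ since $\sigma_0\cdot\nu=v_0=0$ by \eqref{eq:ell}, so the variational solution $(u_\lambda,\sigma_\lambda,p_\lambda)$ is well defined. First I would derive $\lambda$-uniform \emph{a priori} bounds from the energy balance \eqref{eq:energy-balance2}: controlling $\int f\dot u_\lambda$ by Young's inequality and Gronwall gives uniform control of $\|\dot u_\lambda\|_{L^\infty(L^2)}$, $\|\sigma_\lambda\|_{L^\infty(L^2)}$ and $\int_0^T|\dot p_\lambda|(\O)\dt$, together with the two boundary quantities
$$\frac{\lambda}{2}\int_0^T\!\!\int_{\partial\Omega}|\sigma_\lambda\cdot\nu|^2\dd\HH^{n-1}\dt \leq C,\qquad \int_0^T\!\!\int_{\partial\Omega}\psi_\lambda(\dot u_\lambda)\dd\HH^{n-1}\dt \leq C.$$
To obtain the Lipschitz-in-time regularity needed for the Ascoli-Arzel\`a extraction of $p_\lambda(t)$ and for passage to the limit in the equation of motion, I would reproduce the \emph{a posteriori} estimate \eqref{eq:apost_est} at the $\lambda$-level by formally differentiating the equation of motion in time and testing against $\ddot u_\lambda$. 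The initial acceleration is controlled by $\|\Div\sigma_0+f(0)\|_2$ (independent of $\lambda$), and the resulting boundary contribution $\lambda^{-1}\|\ddot u_\lambda\|_{L^2(\partial\Omega)}^2$ has the correct sign and can be dropped, yielding $\|\ddot u_\lambda\|_{L^\infty(L^2)} + \|\dot\sigma_\lambda\|_{L^\infty(L^2)} \leq C$.

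With these bounds I would repeat \emph{mutatis mutandis} the arguments of Subsections \ref{subsec:4.1}--\ref{subsec:4.5}: extract (non-relabelled) weak-$*$ limits \eqref{eq:conv_11}, establish the regularity \eqref{eq:reg}, transfer the initial conditions, the additive decomposition, the stress constraint, and the equation of motion. The Neumann case $\ell=+\infty$ is then immediate: the bound $\lambda\|\sigma_\lambda\cdot\nu\|_{L^2(L^2(\partial\Omega))}^2 \leq C$ forces $\sigma_\lambda\cdot\nu\to 0$ strongly in $L^2(0,T;L^2(\partial\Omega))$, which identifies with $\sigma^{(\infty)}\cdot\nu$ by weak continuity of the normal trace in $L^2(0,T;H^{-1/2}(\partial\Omega))$ combined with the $L^2$ bound on $\Div\sigma_\lambda = \ddot u_\lambda - f$; moreover, since $\psi_\lambda(z) \leq z^2/(2\lambda)$ whenever $|z|\leq\lambda$ and $\dot u_\lambda$ is uniformly bounded in $L^2(\partial\Omega)$, the boundary dissipation $\int\psi_\lambda(\dot u_\lambda)$ vanishes in the limit, and the flow rule $|\dot p^{(\infty)}| = [\sigma^{(\infty)}\cdot\dot p^{(\infty)}]$ is recovered via the energy equality argument of Subsection \ref{subsec:4.5}.

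The Dirichlet case $\ell=0^+$ is expected to be the main obstacle. The inequality $\psi_\lambda(z) \geq |z| - \lambda/2$ yields a uniform $L^1$ bound for the trace of $\dot u_\lambda$ on $\partial\Omega$, and combined with \eqref{eq:traceBV} and the analogue of Proposition \ref{prop:strong_conv}, it allows one to identify the $L^1$ trace of $\dot u^{(0)}$. To pass to the lower limit in the energy balance, the key point is the inequality
$$\liminf_{\lambda\to 0}\int_0^T\!\!\int_{\partial\Omega} \psi_\lambda(\dot u_\lambda)\dd\HH^{n-1}\dt \geq \int_0^T\!\!\int_{\partial\Omega} |\dot u^{(0)}|\dd\HH^{n-1}\dt,$$
which follows from the pointwise monotone convergence $\psi_\lambda(z)\nearrow|z|$ as $\lambda\searrow 0$, the $1$-Lipschitz character of $\psi_\lambda$, and a relaxation argument in the spirit of Proposition \ref{relax}. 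Meanwhile, $\lambda\int|\sigma_\lambda\cdot\nu|^2 \to 0$ since it is bounded by $C\lambda$. The converse energy inequality is then obtained by the duality trick of Subsection \ref{subsec:4.5}: using the integration by parts formula in Proposition \ref{IPP_sig_u_varphi}, Remark \ref{rmk:ineq1}, the stress constraint $\|\sigma^{(0)}(t)\cdot\nu\|_{L^\infty(\partial\Omega)}\leq 1$, and \eqref{eq:abs_cont} yield
$$|\dot p^{(0)}(t)| \geq [\sigma^{(0)}(t)\cdot\dot p^{(0)}(t)],\qquad |\dot u^{(0)}(t)| \geq -(\sigma^{(0)}(t)\cdot\nu)\,\dot u^{(0)}(t),$$
and the energy equality then forces both inequalities to be equalities, which gives the announced flow rule on $\Omega$ and the relaxed boundary condition on $\partial\Omega$.

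Finally, uniqueness in both limit cases would be established by a Kato-type inequality analogous to Proposition \ref{prop_kato_ineq}, passing to the limit in \eqref{kato_ineq_eps_zero} with $\lambda$ replaced by $\lambda_k\to\ell$ rather than letting $\e\to 0$: for $\ell=+\infty$ the boundary dissipation $2\lambda_k\int(\psi'_{\lambda_k}(\dot u)-\psi'_{\lambda_k}(\dot{\tilde u}))^2\varphi$ has the right sign and is dropped, leaving a purely interior Kato inequality; for $\ell=0$ the monotonicity of $z\mapsto -\psi'_\lambda(z)$ (equivalently, of the limit boundary multivalued map $\partial|\cdot|$ restricted by $|\sigma\cdot\nu|\leq 1$) guarantees that the boundary term in the Kato inequality remains non-negative in the limit. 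Testing with $\varphi(x,t)=T-t$ yields uniqueness, which in turn implies that the entire family $(u_\lambda,\sigma_\lambda,p_\lambda)$, not merely a subsequence, converges as $\lambda\to\ell$.
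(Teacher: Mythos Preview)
Your overall strategy coincides with the paper's: obtain $\lambda$-uniform bounds from the energy balance \eqref{eq:energy-balance2} and the $\e\to 0$ limit of \eqref{eq:apost_est}, extract weak limits, upgrade to strong convergence of $(\dot u_\lambda,\sigma_\lambda)$, then derive the flow rules from a one-sided energy inequality combined with the converse duality inequality, and finally uniqueness from a Kato-type comparison. The paper organizes exactly these three steps.

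Two points deserve attention. First, the phrase ``repeat \emph{mutatis mutandis} the arguments of Subsections~\ref{subsec:4.1}--\ref{subsec:4.5}'' hides the one genuinely new difficulty. In Proposition~\ref{prop:strong_conv} the test function $\dot u_\e$ was in $H^1(\Omega)$; here $\dot u_\lambda(t)$ is only in $BV(\Omega)$, so one must work with the duality pairing of Definition~\ref{def_Sigma_p} and Proposition~\ref{IPP_sig_u_varphi}. More importantly, after this integration by parts the boundary integral $\int_{\partial\Omega}(\sigma_\lambda\cdot\nu-\sigma^{(\ell)}\cdot\nu)\dot u_\lambda$ must be shown to be asymptotically nonpositive. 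For $\ell=+\infty$ this follows from $\sigma^{(\infty)}\cdot\nu=0$ and $(\sigma_\lambda\cdot\nu)\dot u_\lambda=-\psi'_\lambda(\dot u_\lambda)\dot u_\lambda\le 0$. For $\ell=0$ the paper carries out a short but specific computation, splitting $\{|\dot u_\lambda|\le\lambda\}$ and $\{|\dot u_\lambda|>\lambda\}$ and using $\|\sigma^{(0)}\cdot\nu\|_\infty\le 1$, to get the pointwise bound $(\sigma_\lambda\cdot\nu-\sigma^{(0)}\cdot\nu)\dot u_\lambda\le\lambda$. This estimate is the crux of the Dirichlet case and should not be left implicit.

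Second, in the Neumann case your claim that ``$\dot u_\lambda$ is uniformly bounded in $L^2(\partial\Omega)$'' is not justified (the energy only gives $\int_{\partial\Omega}\psi_\lambda(\dot u_\lambda)\le C$, which for large $\lambda$ yields $\|\dot u_\lambda\|_{L^2(\partial\Omega)}^2\le 2C\lambda$), and in any event is unnecessary: the paper simply drops both nonnegative boundary terms from the left-hand side of \eqref{eq:energy-balance2} before taking the lower limit. With these two corrections your sketch matches the paper's proof.
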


\begin{remark}
Note that in the Dirichlet case ($\ell=0$), as classical in variational problems with linear growth, the velocity may concentrate on the boundary so that its inner trace might not vanish as required by the boundary condition. It explains why some plastic strain can accumulate on the boundary and that the flow rule is formulated in $\overline \O$ and not only in $\O$.
\end{remark}

\begin{remark}
The assumption \eqref{eq:ell} seems to be artificial, however it allows us to easily satisfy the boundary condition $\sigma_0\cdot \nu+\lambda^{-1}v_0=0$ on $\partial \O$ for the initial data for every $\lambda>0$. It seems more natural to consider initial data that satisfy~\eqref{initial_data_DirichletNeumann} and, for example in the Neumann case, only
\begin{equation}
\label{BC_init_Neumann}
\sigma_0\cdot \nu = 0\quad\HH^{n-1}\text{-a.e. on }\partial \Omega.\\
\end{equation}
To do so, one should be able to construct for every $(u_0,v_0,\sigma_0,p_0)\in H^1(\O) \times H^2(\Omega) \times H(\Div,\O) \times  L^2(\Omega;\R^n)$, satisfying~\eqref{initial_data_DirichletNeumann} and~\eqref{BC_init_Neumann},
a sequence $(u_0^\lambda,v_0^\lambda,\sigma_0^\lambda,p_0^\lambda)\in H^1(\O) \times H^2(\Omega) \times H(\Div,\O) \times  L^2(\Omega;\R^n)$ such that
\[
\begin{cases}
\nabla u_0^\lambda = \sigma_0^\lambda + p_0^\lambda \text{ in }L^2(\O;\R^n),\\
\sigma_0^\lambda\cdot \nu + \lambda^{-1} v_0^\lambda = 0\quad\HH^{n-1}\text{-a.e. on }\partial \Omega,\\
|\sigma_0^\lambda|\leq 1 \text{ a.e. in }\Omega,
\end{cases}
\]
and, at least, 
\[
\begin{cases}
u_0^\lambda \wto u_0 &\text{weakly in } L^2(\O), \\
v_0^\lambda \wto v_0 &\text{weakly in } H^1(\O), \\
\sigma_0^\lambda \wto \sigma_0 &\text{weakly in } H(\Div,\O),\\
p_0^\lambda \wto p_0 &\text{weakly* in }\mathcal M(\O;\R^n).
\end{cases}
\]
This issue will not be addressed in this paper and for that reason, we assume \eqref{eq:ell} for simplicity.
\end{remark}

\begin{proof} The proof follows closely the lines of that of Theorem \ref{final_theorem_eps_eq_zero}. It is divided into three steps.

\medskip

\noindent {\bf Step 1: Weak convergences.} Passing to the limit as $\e \to 0$ in estimates \eqref{estimate_snd_derivative_epsilon} and \eqref{estimate_first_derivative_epsilon} yields
$$\sup_{t\in [0,T]} \| \ddot u_\lambda(t) \|^2_2 + \sup_{t\in [0,T]} \| \dot \sigma_\lambda(t) \|^2_2
\le  C\bigg(\|\Div\sigma_0+f(0)\|_2^2+  \| \nabla v_0 \|^2_2 + \Big(\int_0^T \| \dot f(t) \|_2\dt\Big)^2 \bigg),$$
and
\begin{multline}\label{eq:estim:69}
\sup_{t\in [0,T]} \|\dot u_\lambda(t) \|^2_2 + \sup_{t\in [0,T]} \|\sigma_\lambda (t)\|_2^2+ \int_0^T \psi_\lambda(\dot u_\lambda(t))\dt +\frac{\lambda}{2} \int_0^T\int_{\partial \O} |\sigma_\lambda \cdot\nu|^2\dd\HH^{n-1}\dt\\
\le  C \left( \|v_0\|_2^2 + \|\sigma_0\|^2_2+ \Big(\int_0^T\| f (t)\|_2\dt\Big)^2 \right),
\end{multline}
where the constants $C>0$ are independent of $\lambda$.   Using similar arguments than in Subsection \ref{subsec:4.1}, we can find a subsequence (not relabeled) and, for $\ell\in \{0,+\infty\}$, functions $u^{(\ell)}$, $\sigma^{(\ell)}$ and $p^{(\ell)}$ as in \eqref{eq:reg} such that the weak convergences \eqref{eq:conv_11} hold. In particular, we can easily derive the initial conditions, the additive decomposition, the stress constraint, and the equation of motion. 

Using the equation of motion for fixed $\lambda>0$, we infer that the sequence $(\sigma_\lambda)_{\lambda>0}$ is bounded in $L^2(0,T;H(\Div,\O))$, so that $\sigma_\lambda\cdot\nu \wto \sigma^{(\ell)}\cdot \nu$ weakly in $L^2(0,T;H^{-1/2}(\partial \O))$. On the other hand, according to estimate \eqref{eq:estim:69}, we have that $\sigma_\lambda\cdot \nu \to 0$ strongly in $L^2(0,T;L^2(\partial \O))$ as $\lambda \to +\infty$. Therefore, the Neumann boundary condition $\sigma^{(\infty)}\cdot \nu=0$ in $L^2(0,T;L^2(\partial \O))$ follows.

\medskip

\noindent {\bf Step 2: Strong convergences.} We now show that, as $\lambda \to \ell$,
\begin{equation}\label{eq:sc}
\begin{cases}
\dot u_\lambda \to\dot u^{(\ell)} &\text{strongly in } \mathcal C^0([0,T];L^2(\O)), \\
\sigma_\lambda \to \sigma^{(\ell)} &\text{strongly in } \mathcal C^0([0,T];L^2(\O;\R^n)).
\end{cases}
\end{equation}
To this aim, substracting the equations of motion, we get that 
$$\ddot u_\lambda -\ddot u^{(\ell)}-\Div(\sigma_\lambda-\sigma^{(\ell)})=0\quad \text{ in }L^2(0,T;L^2(\O)).$$
For $t \in [0,T]$, multiplying by $\mathds{1}_{[0,t]} \dot u_\lambda$, integrating over $\O \times (0,T)$, and using Definition \ref{def_Sigma_p} of duality together with the integration by parts formula given by Proposition \ref{IPP_sig_u_varphi} yields
\begin{multline*}
\int_0^t \int_\Omega (\ddot u_\lambda - \ddot u^{(\ell)}) \dot u_\lambda\dx\ds
 + \int_0^t\int_\Omega (\sigma_\lambda-\sigma^{(\ell)}) \cdot \dot \sigma_\lambda \dx\ds\\
 +\int_0^t [(\sigma_\lambda(s)-\sigma^{(\ell)}(s)) \cdot \dot p_\lambda(s)](\O)\ds -\int_0^t \int_{\partial \Omega} (\sigma_\lambda \cdot \nu-\sigma^{(\ell)}\cdot\nu)\dot u_\lambda\dd \mathcal{H}^{n-1} \ds=0.
 \end{multline*}
According to the flow rule for fixed $\lambda>0$, the fact that  for all $s \in [0,T]$, $\|\sigma^{(\ell)}(s)\|_\infty\leq 1$ and Remark \ref{rmk:ineq1}, we get that
$[(\sigma_\lambda(s)-\sigma^{(\ell)}(s)) \cdot \dot p_\lambda(s)](\O)\geq 0$ for a.e. $s \in [0,T]$. Thus,
\begin{multline*}
\int_0^t \int_\Omega (\ddot u_\lambda - \ddot u^{(\ell)}) (\dot u_\lambda-\dot u^{(\ell)})\dx\ds  + \int_0^t\int_\Omega (\sigma_\lambda-\sigma^{(\ell)}) \cdot ( \dot \sigma_\lambda-\dot \sigma^{(\ell)})\dx\ds\\ 
\le  - \int_0^t \int_\Omega (\ddot u_\lambda - \ddot u^{(\ell)}) \dot u^{(\ell)}\dx\ds  - \int_0^t \int_\Omega (\sigma_\lambda-\sigma^{(\ell)}) \cdot \dot \sigma^{(\ell)}\dx\ds+ \int_0^t \int_{\partial \Omega} (\sigma_\lambda \cdot \nu-\sigma^{(\ell)}\cdot\nu)\dot u_\lambda\dd \mathcal{H}^{n-1} \ds,
\end{multline*}
and integrating by parts the left hand side with respect to time yields
\begin{multline}\label{eq:rhs}
 \frac{1}{2} \|\dot u_\lambda(t) - \dot u^{(\ell)}(t)\|^2_2+ \frac{1}{2}  \|\sigma_\lambda(t) - \sigma^{(\ell)}(t)\|_2^2\\
 \le  - \int_0^t \int_\Omega (\ddot u_\lambda - \ddot u^{(\ell)}) \dot u^{(\ell)}\dx\ds  - \int_0^t \int_\Omega (\sigma_\lambda-\sigma^{(\ell)}) \cdot \dot \sigma^{(\ell)}\dx\ds  \\
+ \int_0^t \int_{\partial \Omega} (\sigma_\lambda \cdot \nu-\sigma^{(\ell)}\cdot\nu)\dot u_\lambda\dd \mathcal{H}^{n-1} \ds.
\end{multline}
The weak convergences \eqref{eq:conv_11} ensure that the first two integrals in the right hand side of \eqref{eq:rhs} tend to $0$ as $\lambda \to \ell$. Concerning the boundary integral, if $\ell=+\infty$, since $\sigma^{(\infty)}\cdot\nu=0$,  using the boundary condition for fixed $\lambda>0$, we infer that 
$$ (\sigma_\lambda \cdot \nu-\sigma^{(\infty)}\cdot\nu)\dot u_\lambda=-\psi'_\lambda(\dot u_\lambda) \dot u_\lambda \leq 0 \quad \text{ a.e. on }\partial \O \times (0,T)$$ 
by convexity of $\psi_\lambda$. On the other hand, if $\ell=0$, from the boundary condition for fixed $\lambda>0$ and the fact that $\|\sigma^{(0)}\cdot\nu\|_{L^\infty(\partial \O \times (0,T))}\leq 1$, we get that
\begin{multline*}
(\sigma_\lambda \cdot \nu-\sigma^{(0)}\cdot\nu)\dot u_\lambda = -\left( |\dot u_\lambda| \mathds{1}_{\{ |\dot u_\lambda|>\lambda\}} + \frac{|\dot u_\lambda|^2}{\lambda}\mathds{1}_{\{ |\dot u_\lambda|\leq\lambda\}}+(\sigma^{(0)}\cdot \nu) \dot u_\lambda \right) \\
\leq -\left( \frac{|\dot u_\lambda|^2}{\lambda}+(\sigma^{(0)}\cdot \nu) \dot u_\lambda\right)\mathds{1}_{\{|\dot u_\lambda|\leq\lambda\}}  \leq \lambda \quad \text{ a.e. on }\partial \O \times (0,T).
\end{multline*}
In both cases, we obtain that the right hand side of \eqref{eq:rhs} is infinitesimal as $\lambda \to \ell$, which completes the proof of the strong convergences.

\medskip

\noindent {\bf Step 3: The flow rules.}
If $\ell=+\infty$, writting the energy balance \eqref{eq:energy-balance2} between two arbitrary times $0 \leq t_1<t_2 \leq T$ yields
$$\frac12 \|\dot u_\lambda(t_2)\|_2^2+\frac12 \|\sigma_\lambda(t_2)\|_2^2 +\int_{t_1}^{t_2} |\dot p_\lambda(t)|(\Omega)\dt
\leq \frac12 \|\dot u_\lambda(t_1)\|_2^2+\frac12 \|\sigma_\lambda(t_1)\|_2^2+\int_{t_1}^{t_2} \int_{\Omega} f \dot u_\lambda \dxdt.$$
Using the strong convergences \eqref{eq:sc} together with the sequential lower semicontinuity of the mapping
$$p \mapsto \int_{t_1}^{t_2} |\dot p(t)|(\Omega)\dt$$
with respect to the weak convergence \eqref{eq:conv_11} (see {\it e.g.} \cite[Appendix]{DalMasoDeSimoneMora}) leads to
\begin{multline*}
\frac12 \|\dot u^{(\infty)}(t_2)\|_2^2+\frac12 \|\sigma^{(\infty)}(t_2)\|_2^2 +\int_{t_1}^{t_2} |\dot p^{(\infty)}(t)|(\Omega)\dt\\
\leq \frac12 \|\dot u^{(\infty)}(t_1)\|_2^2+\frac12 \|\sigma^{(\infty)}(t_1)\|_2^2+\int_{t_1}^{t_2} \int_{\Omega} f \dot u^{(\infty)} \dxdt.
\end{multline*}
Deriving this inequality with respect to time, and using the equation of motion then implies that for a.e. $t \in [0,T]$, 
$|\dot p^{(\infty)}(t)|(\O) \leq [\sigma^{(\infty)}(t)\cdot \dot p^{(\infty)}(t)](\O)$.
On the other hand, since we have $\|\sigma^{(\infty)}\|_{L^\infty(\O \times (0,T))}\leq 1$, Remark \ref{rmk:ineq1} ensures that $|\dot p^{(\infty)}(t)|\geq [\sigma^{(\infty)}(t) \cdot \dot p^{(\infty)}(t)]$ in $\mathcal M(\O)$, 
from which we deduce that
$$|\dot p^{(\infty)}(t)|= [\sigma^{(\infty}(t) \cdot \dot p^{(\infty)}(t)] \quad \text{ in }\mathcal M(\O) \text{ for a.e. }t \in [0,T].$$

\medskip

Next, if $\ell=0$, writting the energy balance \eqref{eq:energy-balance2} between two arbitrary times $0 \leq t_1<t_2 \leq T$ yields
\begin{multline*}
\frac12 \|\dot u_\lambda(t_2)\|_2^2+\frac12 \|\sigma_\lambda(t_2)\|_2^2 +\int_{t_1}^{t_2} |\dot p_\lambda(t)|(\Omega)\dt+\int_{t_1}^{t_2}\int_{\partial \O}\psi_\lambda(\dot u_\lambda)\dd\HH^{n-1}\dt\\
\leq \frac12 \|\dot u_\lambda(t_1)\|_2^2+\frac12 \|\sigma_\lambda(t_1)\|_2^2+\int_{t_1}^{t_2} \int_{\Omega} f \dot u_\lambda \dxdt.
\end{multline*}
Using the definition of $\psi_\lambda$, we have
\begin{multline*}
\int_{t_1}^{t_2}\int_{\partial \O}\psi_\lambda(\dot u_\lambda)\dd\HH^{n-1}\dt  \geq \int_{t_1}^{t_2}  \int_{\partial \O} |\dot u_\lambda| \dd\HH^{n-1}\dt-\int_{t_1}^{t_2} \int_{\{|\dot u_\lambda(t)| \leq \lambda \}}|\dot u_\lambda|\dd\HH^{n-1}\dt\\
  -\frac{\lambda}{2} \int_{t_1}^{t_2}  \HH^{n-1}(\{|\dot u_\lambda(t)|>\lambda\})\dt\geq  \int_{t_1}^{t_2}  \int_{\partial \O} |\dot u_\lambda| \dd\HH^{n-1}\dt -\lambda (t_2-t_1) \HH^{n-1}(\partial \O).
\end{multline*}
Thanks to the strong convergences \eqref{eq:sc} and the sequential lower semicontinuity of the mapping
$$(u,p) \mapsto \int_{t_1}^{t_2} |\dot p(t)|(\Omega)\dt+ \int_{t_1}^{t_2}\int_{\partial \O}|\dot u|\dd\HH^{n-1}\dt$$
with respect to the convergences \eqref{eq:conv_11} and \eqref{eq:sc}  (see {\it e.g.} \cite[Appendix]{DalMasoDeSimoneMora}), we get
\begin{multline*}
\frac12 \|\dot u^{(0)}(t_2)\|_2^2+\frac12 \|\sigma^{(0)}(t_2)\|_2^2 +\int_{t_1}^{t_2} |\dot p^{(0)}(t)|(\Omega)\dt+\int_{t_1}^{t_2}\int_{\partial \O}|\dot u^{(0)}|\dd\HH^{n-1}\dt\\
\leq \frac12 \|\dot u^{(0)}(t_1)\|_2^2+\frac12 \|\sigma^{(0)}(t_1)\|_2^2+\int_{t_1}^{t_2} \int_{\Omega} f \dot u^{(0)} \dxdt,
\end{multline*}
or still, by definition of duality
$$|\dot p^{(0)}(t)|(\O) +\int_{\partial \O} |\dot u^{(0)}(t)|\dd \mathcal{H}^{n-1}
\leq [\sigma^{(0)}(t)\cdot \dot p^{(0)}(t)](\O) -\int_{\partial \O} (\sigma^{(0)}(t)\cdot \nu) \dot u^{(0)}(t)\dd\HH^{n-1}.$$
On the other hand, since $\|\sigma^{(0)}\cdot \nu\|_{L^\infty(\partial \O \times (0,T))}\leq \|\sigma^{(0)}\|_{L^\infty(\O \times (0,T))}\leq 1$, Remark \ref{rmk:ineq1} ensures that for a.e. $t \in [0,T]$, $|\dot p^{(0)}(t)|\geq [\sigma^{(0)}(t) \cdot \dot p^{(0)}(t)]$ in $\mathcal M(\O)$ and
$|\dot u^{(0)}(t)|\geq -(\sigma^{(0)}(t)\cdot \nu)\dot u^{(0)}(t)$ $\HH^{n-1}$-a.e. on $\partial \O$, from which we deduce that
$$|\dot p^{(0)}(t)|= [\sigma^{(0)}(t) \cdot \dot p^{(0)}(t)] \quad \text{ in }\mathcal M(\O)$$
and $$|\dot u^{(0)}(t)|= -(\sigma^{(0)}(t)\cdot \nu)\dot u^{(0)}(t)\quad \HH^{n-1}\text{-a.e. on }\partial \O.$$

Finally, in both cases $\ell \in \{0,+\infty\}$, the uniqueness can be recovered as in Subsection \ref{subsec:4.8}.
\end{proof}

\section{Short time regularity of the solution}
\label{sec:5}

\noindent In this section, we prove that the variational solutions to the elasto-plastic problem are smooth in short time, provided the initial data are smooth and compactly supported in space. This kind of regularity result in the context of dynamical elasto-plasticity seems to be new, and the argument strongly rests on the hyperbolic structure of the model. The general idea is similar to the proof of the fact that the (unique) entropic solution to a scalar conservation law with $BV$ initial data is actually $BV$ (instead of just $L^1$ in the Kru{\v{z}}kov theory \cite{K}). It consists in proving a comparison principle between two solutions associated to different initial data. In \cite{K}, an $L^1$-contraction principle states that, at time $t$, the $L^1$-distance between two solutions can be estimated by the $L^1$-distance of the initial data. In our context, an $L^2$-comparison principle has been established in Proposition \ref{prop_kato_ineq}. Then, translating in space the data enables one to get an $L^2$-estimate on the difference quotient of the solution in terms of the $L^2$-norm of the difference quotient of the data. In particular, if the data are $H^1$, then the solution is $H^1$ as well (see \cite[Lemma 10]{DLS} in the full space). Since we are dealing with a boundary value problem, one has to be careful that the translated solutions remain inside the domain $\O$. This is the reason why we need to ensure that, in short time, if the data are compactly supported in space, then so is the solution, which is a statement of the finite speed propagation property. In that way, the boundary of the domain can be ignored, and one can argue similarly as in the full space.

\begin{proposition}[Finite speed propagation]
\label{prop_finite_wave}
Let $(u,\sigma,p)$ be the variational solution of the elasto-plastic problem given by Theorem~\ref{final_theorem_eps_eq_zero} for the initial condition $(u_0,v_0,\sigma_0,p_0)$ and the source term $f$. Suppose that there exists a compact set $K \subset \O$ such that 
$$\supp(v_0,\sigma_0,f(t)) \subset K \quad \text{ for all }t \in [0,T].$$
Then, for all $T^* \in (0,T]$ be such that $T^* < \dist (K,\partial \O)$, there exists a compact set $K^* \subset \O$ such that $\supp(\dot u,\sigma) \subset K^* \times [0,T^*]$.
\end{proposition}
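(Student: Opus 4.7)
The plan is to compare the given solution $(u,\sigma,p)$ to the trivial (zero) solution via the Kato inequality of Proposition~\ref{prop_kato_ineq}, using Lipschitz cone-shaped test functions that propagate just above unit speed. This is a standard hyperbolic-style energy estimate on a backward cone, dressed so that the only input needed about $(u,\sigma,p)$ is the Kato inequality itself. Since the quadruple $(0,0,0,0)$ with zero source fulfills the admissibility conditions \eqref{eq:initial_conditions} and the resulting variational solution is identically zero (by the uniqueness statement following Proposition~\ref{prop_kato_ineq}), I may apply \eqref{kato_ineq_eps_zero} with $(\tilde u,\tilde\sigma,\tilde p)\equiv 0$ and $\tilde f\equiv 0$.

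Fix $x_0 \in \Omega$, $r \in (0,\dist(x_0,K)]$ and a speed $c > 1$, and take the test function
\[
\varphi(x,t) := \bigl( r - |x-x_0| - c t\bigr)_+ \in W^{1,\infty}(\Omega \times (0,T)),
\]
for which $\partial_t \varphi = -c\,\mathds{1}_C$ and $\nabla\varphi = -\mathds{1}_C\, e$, with $e(x) := (x-x_0)/|x-x_0|$ and $C := \{(x,t)\in \Omega\times(0,T) : |x-x_0|+ct < r\}$. Since $\supp v_0$, $\supp \sigma_0$ and $\supp f(t)$ all lie in $K$ while $B(x_0,r) \cap K = \emptyset$, the initial and source contributions on the left-hand side of \eqref{kato_ineq_eps_zero} vanish, while the right-hand side is non-negative and can be dropped. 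What survives is
\[
c\int_C (\dot u^2 + |\sigma|^2)\dxdt \le 2\int_C (\sigma \cdot e)\, \dot u\dxdt,
\]
and the Young inequality $2(\sigma\cdot e)\dot u \le (\sigma\cdot e)^2 + \dot u^2 \le |\sigma|^2 + \dot u^2$ (using $|e|=1$) forces $(c-1)\int_C(\dot u^2 + |\sigma|^2) \le 0$, hence $\dot u = \sigma = 0$ a.e.\ in $C$.

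Letting $c \searrow 1$ and varying $(x_0,r)$, this gives $\dot u = \sigma = 0$ almost everywhere on the set $\{(x,t) \in \Omega \times [0,T] : \dist(x,K) > t\}$, so that $\supp(\dot u,\sigma)$ lies in the closed conic neighborhood $K^*\times [0,T^*]$ with $K^* := \{x\in \overline\Omega : \dist(x,K) \le T^*\}$. The hypothesis $T^* < \dist(K,\partial \Omega)$ precisely ensures $K^* \subset \Omega$ is compact, finishing the proof. The one genuinely conceptual step is enforcing a \emph{strictly} supersonic cone: at the critical speed $c=1$ the Young inequality only yields $(\dot u - \sigma\cdot e)^2 + |\sigma - (\sigma\cdot e)e|^2 = 0$ in $C$, which is insufficient for the full vanishing; the strict inequality $c > 1$ followed by a limiting procedure is what makes the argument close.
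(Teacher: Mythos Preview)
Your proof is correct and follows essentially the same route as the paper's: compare with the zero solution via the Kato inequality using a cone-shaped test function. The only notable difference is the shape of the cone. You use a pure cone $\varphi=(r-|x-x_0|-ct)_+$ at strictly supersonic speed $c>1$ and then let $c\searrow 1$, whereas the paper uses a flat-top cone at unit speed, equal to $T^*-t$ on an inner ball $B(x_0,\rho_0)$ and decaying linearly to zero in the surrounding annulus. With the paper's choice, on the sloped annulus the integrand $|\dot u|^2+|\sigma|^2-2(\sigma\cdot e)\dot u$ is pointwise nonnegative and is simply discarded, while on the flat inner region the spatial gradient vanishes and one reads off $\int_0^{T^*}\!\int_{B(x_0,\rho_0)}(|\dot u|^2+|\sigma|^2)\le 0$ directly, with no limiting procedure. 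So what you call ``the one genuinely conceptual step'' (the strict supersonic speed) is really an artifact of the pure-cone test; the flat top avoids it. Conversely, your argument is a bit more direct globally: it yields vanishing on the full set $\{(x,t):\dist(x,K)>t\}$ and hence the explicit $K^*=\{x\in\overline\Omega:\dist(x,K)\le T^*\}$ in one stroke, while the paper first covers $\partial\Omega$ by balls and argues only in a boundary layer $L_\eta$.
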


\begin{proof}
By assumption, we know that for all $x\in \partial \Omega$, we have $\dist(x,K)>T^*$, so that we can find some $r_{x}>0$ such that $\dist(x,K) = r_x + T^*$. 
Using the fact that $\partial \Omega$ is compact, we obtain the existence of $p\in \N$ and $x_1,\ldots,x_p \in \partial \O$ such that
\[
\partial \Omega \subset \bigcup_{i=1}^p B\left(x_i,\frac{r_{x_i}}{4}\right).
\]
Observe that if $y\in B\left(x_i,\frac{r_{x_i}}{2}\right)$ for some  $1\le i\le p$, then
\[
\left| \dist(y,K) - \dist(x_i,K) \right| \le |y-x_i | \le \frac{r_{x_i}}{2}, 
\]
and consequently,
\[
\dist(y,K) \ge T^* + \frac{r_{x_i}}{2} >0,
\]
which implies that $v_0=0$, $\sigma_0=0$ and $f(t)=0$ in $\cup_{i=1}^p B\left(x_i,r_i/2\right)$ for all $t \in [0,T]$. 

Define $\eta = \min_{1\le i \le p} r_{x_i}/4 >0$, and consider the  boundary layer
\[
L_\eta=\left\{y \in \O: 0< \dist(y,\partial \Omega) < \eta \right\} \subset \bigcup_{i=1}^p  B\left(x_i,\frac{r_{x_i}}{2}\right)\cap \Omega
\]
so that $v_0=0$, $\sigma_0=0$ and $f(t)=0$ in $L_\eta$ for all $t \in [0,T]$. Let us show that $\dot u=0$ and $\sigma=0$ on $L_\eta \times [0,T^*]$. To this aim, let $x_0 \in L_\eta$, and,  the set $L_\eta$ being open, let $\rho_0 \in (0,\eta/2)$ be such that $B(x_0,\rho_0) \subset  L_\eta$. Now define the function $\varphi \in W^{1,\infty}(\R^n \times (0,T^*))$ as
\[
\varphi(t,x) = 
\begin{cases}
T^*-t+ \rho_0-|x-x_0|&\text{if }
\begin{cases} 
t\in [0,T^*],\\
\rho_0<|x-x_0|<\rho_0+T^*-t,
\end{cases}\\
T^*-t &\text{if }
\begin{cases} 
t\in [0,T^*],\\
|x-x_0|<\rho_0,
\end{cases}\\
0 &\text{otherwise. }
\end{cases}
\]
Note that $\varphi\geq 0$, and 
$$\begin{cases}
\dot \varphi=-\mathds{1}_{\{(x,t) : \; t \in [0,T^*],\; |x-x_0|<\rho_0 +T^*-t\}},\\
\nabla \varphi =-\frac{x-x_0}{|x-x_0|}\mathds{1}_{\{(x,t) : \; t \in [0,T^*],\; \rho_0<|x-x_0|<\rho_0 +T^*-t\}},
\end{cases}$$
which implies that $-|\dot u|^2\dot \varphi - |\sigma |^2\dot \varphi+2\sigma\cdot \nabla \varphi\dot u \geq 0$ a.e in $\Omega \times (0,T^*)$. Consequently, using the comparison principle Proposition~\ref{prop_kato_ineq} (with $(u,\sigma,p)$ and the null solution) it follows that
\begin{multline*}
-\int_0^{T^*} \int_{B(x_0,\rho_0)} (|\dot u|^2+ |\sigma |^2)\dot \varphi \dxdt+2\int_0^{T^*} \int_{B(x_0,\rho_0)} \sigma \cdot \nabla \varphi\dot u \dxdt \\
\leq \int_{B(x_0,\rho_0+T^*)} (|v_0|^2 + |\sigma_0 |^2)\varphi(0) \dx + \int_0^{T^*} \int_{B(x_0,\rho_0+T^*)}|f| |\dot u|\varphi\dxdt,
\end{multline*}
and since the spatial derivative of $\varphi$ vanish on $B(x_0,\rho_0)$, we get that
\begin{multline*}
\int_0^{T^*} \int_{B(x_0,\rho_0)} (|\dot u|^2 + |\sigma |^2) \dxdt\\
\leq \int_{B(x_0,\rho_0+T^*)} (|v_0|^2+ |\sigma_0 |^2)\varphi(0) \dx + \int_0^{T^*} \int_{B(x_0,\rho_0+T^*)}|f| |\dot u|\varphi\dxdt.
\end{multline*}
Using the definition of $x_0$, we obtain that for every $y\in B(x_0,\rho_0+T^*)$,
\[
\left| \dist(y,K) - \dist(x_0,K)\right| \le \rho_0+T^*,
\]
and thus
\[
\dist(y, K) >T^* + \eta -\rho_0-T^* > \frac{\eta}{2}>0.
\]
Consequently,
$$\int_0^{T^*} \int_{B(x_0,\rho_0)} (|\dot u|^2 + |\sigma |^2) \dxdt \leq 0$$
which implies that both $\dot u$ and $\sigma$ vanish in $L_\eta\times (0,T^*)$. The conclusion thus follows by setting $K^*=\O \setminus L_\eta$.
\end{proof}

\begin{remark}
\label{rem_support_finite_wave}
Since $u \in W^{2,\infty}([0,T];L^2(\O))$, we have, for all $t \in [0,T]$,
$$u(t)=u_0+\int_0^t \dot u(s)\ds,$$
 where the integral is intended as a Bochner integral in $L^2(\O)$. Therefore, if further $\supp(u_0) \subset K$, it follows that $\supp(u) \subset K^* \times [0,T^*]$. As a consequence, the measure $Du$ is also compactly supported in  $K^* \times [0,T^*]$, and the additive decomposition entails that $\supp(p) \subset K^* \times [0,T^*]$ as well.
 \end{remark}

We are now in position to state the regularity result.

\begin{theorem}[Short time regularity]
Let $(u,\sigma,p)$ be the variational solution to the elasto-plastic problem given by Theorem~\ref{final_theorem_eps_eq_zero} for the initial condition $(u_0,v_0,\sigma_0,p_0)$ and the source term $f$. Suppose that there exists a compact set $K \subset \O$ such that 
$$\supp(u_0,v_0,\sigma_0,p_0,f(t)) \subset K \quad  \text{ for all }t \in [0,T],$$
and that 
$$\sigma_0 \in H^1(\Omega;\R^n), \quad f \in H^1(\O \times (0,T)).$$
Then, for all $T^* \in (0,T]$ be such that $T^* < \dist (K,\partial \O)$, we have 
$$\begin{cases}
u \in H^1([0,T^*];H^1(\O)),\\
\sigma \in L^2(0,T^*;H^1(\O;\R^n)),\\
p \in H^1([0,T^*];L^2(\O;\R^n)).
\end{cases}$$
\end{theorem}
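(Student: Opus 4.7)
The plan is to exploit the finite speed of propagation from Proposition~\ref{prop_finite_wave} and Remark~\ref{rem_support_finite_wave}, which provide a compact set $K^*\subset\O$ such that $\supp(u,\dot u,\sigma,p)\subset K^*\times[0,T^*]$, together with the Kato inequality of Proposition~\ref{prop_kato_ineq} applied to small spatial translates. The key observation is that, because the solution stays uniformly away from $\partial\O$ on $[0,T^*]$, spatial translations do not disturb the boundary condition, which reduces to the trivial one and restores translation invariance on the bounded domain.

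Fix $h\in\R^n$ with $|h|<\dist(K^*,\partial\O)$, extend $(u,\sigma,p)$ by zero outside $K^*$, and consider the translated data $(u_0,v_0,\sigma_0,p_0,f)(\cdot-h,\cdot)$, which still satisfy the hypotheses of Theorem~\ref{final_theorem_eps_eq_zero} (all supports remain strictly inside $\O$, so the initial boundary compatibility is trivial). I claim that the bare translate $(\tilde u,\tilde\sigma,\tilde p):=(u,\sigma,p)(\cdot-h,\cdot)$ is the variational solution associated to these translated data on $[0,T^*]$: every item (1)--(6) of Theorem~\ref{final_theorem_eps_eq_zero} is translation-invariant on this interval, and the boundary condition holds trivially since both $\tilde\sigma\cdot\nu$ and $\dot{\tilde u}$ vanish on $\partial\O$; uniqueness on $[0,T^*]$, encoded in Proposition~\ref{prop_kato_ineq} applied with the test function $\varphi(t):=\max(T^*-t,0)$ and matching data, promotes this identification to all of $[0,T^*]$. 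Now apply Proposition~\ref{prop_kato_ineq} to the pair $((u,\sigma,p),(\tilde u,\tilde\sigma,\tilde p))$ with that same $\varphi\in W^{1,\infty}(\O\times(0,T))$, for which $\nabla\varphi=0$ and $\dot\varphi=-\mathds{1}_{[0,T^*]}$. Dropping the non-negative boundary term and absorbing the source contribution via Young's inequality yields
\begin{multline*}
\|\dot u-\dot{\tilde u}\|_{L^2(\O\times(0,T^*))}^2+\|\sigma-\tilde\sigma\|_{L^2(\O\times(0,T^*))}^2\\
\le C(T^*)\bigl(\|\tilde v_0-v_0\|_2^2+\|\tilde\sigma_0-\sigma_0\|_2^2+\|\tilde f-f\|_{L^2(\O\times(0,T^*))}^2\bigr).
\end{multline*}
Dividing by $|h|^2$ and using the identification together with the assumptions $v_0\in H^2(\O)$, $\sigma_0\in H^1(\O;\R^n)$ and $f\in H^1(\O\times(0,T))$, the right-hand side is uniformly bounded in $h$. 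The standard difference-quotient characterization of Sobolev spaces then produces $\dot u\in L^2(0,T^*;H^1(\O))$ and $\sigma\in L^2(0,T^*;H^1(\O;\R^n))$.

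A short bootstrap concludes. Writing $\nabla u(t)=\nabla u_0+\int_0^t\nabla\dot u(s)\ds$ in $L^2(\O;\R^n)$ gives $u\in L^\infty(0,T^*;H^1(\O))$ and hence $u\in H^1([0,T^*];H^1(\O))$. Time-differentiating the additive decomposition yields $\dot p=\nabla\dot u-\dot\sigma$ in the distributional sense; since both summands lie in $L^2(\O\times(0,T^*))$ (recall $\dot\sigma\in L^\infty(0,T;L^2(\O;\R^n))$), one obtains $\dot p\in L^2(0,T^*;L^2(\O;\R^n))$, while the pointwise identity $p(t)=\nabla u(t)-\sigma(t)\in L^2(\O;\R^n)$ gives $p\in L^\infty(0,T^*;L^2(\O;\R^n))$, so that $p\in H^1([0,T^*];L^2(\O;\R^n))$.

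The main delicate point is the translation identification: verifying that the bare translate of $(u,\sigma,p)$ really is a variational solution to the translated problem. Without the compact support furnished by finite speed of propagation, the boundary condition \eqref{eq:lambdaBC} would not commute with spatial translations, and traces on $\partial\O$ would have to be handled separately. Finite speed of propagation is precisely the ingredient that neutralizes this obstacle.
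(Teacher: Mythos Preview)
Your proposal is correct and follows essentially the same strategy as the paper: finite speed of propagation localizes the solution away from $\partial\O$ on $[0,T^*]$, spatial translates of the solution are again variational solutions (with trivial boundary condition), and the Kato inequality with $\varphi(x,t)=T^*-t$ controls the $L^2$ difference quotients, after which the standard Sobolev characterization and the additive decomposition finish the bootstrap. The only cosmetic difference is that the paper extends the solution by zero to a slightly larger smooth domain $\O'\supset\overline\O$ before translating, whereas you extend by zero in $\R^n$ and restrict back to $\O$; both work since the support remains compactly contained in $\O$.
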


\begin{proof}
Using Proposition~\ref{prop_finite_wave} and Remark~\ref{rem_support_finite_wave}, we know that for all $T^*<\dist (K,\partial \O)$, there exists a compact set, $K^*\subset \Omega$ such that $\supp(u,\sigma,p) \subset K^* \times [0,T^*]$.  Since $K^*$ is a compact subset of $\Omega$, there exists $\delta>0$ such that for all $h\in \R^n$ with $|h|< \delta$, the sets $K^*+ h$ are also compactly embedded in $\Omega$. Let $\Omega'$ be a bounded smooth open subset of $\R^n$ such that $\overline \Omega \subset \O'$, and for all $h \in \R^n$ with $|h|< \delta$, $\Omega + h \subset \Omega'$. 

\medskip

\noindent \textbf{Step 1: Extension on $\O' \times (0,T^*)$.} We denote by $\bar f$, $\bar{u}$ and $\bar{\sigma}$ the extensions of $f$, $u$ and $\sigma$ by zero on $ \O' \times (0,T^*)$. Clearly, one has $\bar f \in H^1([0,T^*];L^2(\O'))$, $\bar{u}\in W^{2,\infty}([0,T^*];L^2(\O'))$, $\bar{\sigma}\in W^{1,\infty}([0,T^*];L^2(\O';\R^n))$ and 
$$\|\bar \sigma \|_{L^\infty(\O' \times(0,T^*))}\le 1.$$ 
In addition,  for all $t\in [0,T^*]$, since the (inner) trace on $\partial \Omega$ of $u(t)$  vanishes, \cite[Theorem 3.87]{AmbrosioFuscoPallara} ensures that the function $\bar{u}(t) \in BV(\O')$ and $
D\bar{u}(t) = Du(t)$ in $\mathcal M(\O')$. Hence, we get that $\bar{u} \in \mathcal{C}^{0,1}([0,T^*];BV(\O'))$. Similarly, since the (inner) normal trace on $\partial \O$ of $\sigma$ vanishes, we deduce that  $\Div \bar\sigma \in L^\infty(0,T^*;L^2(\O'))$, which implies that $\bar{\sigma} \in L^\infty (0,T^*;H(\Div,\O'))$. 

For every $t\in [0,T^*]$, we define the measure $\bar{p}(t) \in \mathcal M(\O';\R^n)$ by
\[
\bar{p}(t) = D\bar{u}(t) - \bar{\sigma}(t).
\]
Using the regularity of $\bar{u}$ and $\bar{\sigma}$, we obtain that $\bar{p} \in \mathcal{C}^{0,1}([0,T^*];\mathcal{M}(\O';\R^n))$. For a.e. $t \in [0,T^*]$, we consider the Radon measure $[\bar{\sigma}(t)\cdot\dot{ \bar p}(t)]$ on $\Omega'$ as in Definition \ref{def_Sigma_p}. Clearly $[\bar{\sigma}(t)\cdot\dot{ \bar p}(t)]=[{\sigma}(t)\cdot\dot{p}(t)]$ in $\mathcal M(\O)$, while Remark \ref{rmk:ineq1} ensures that $|[\bar{\sigma}(t)\cdot\dot{ \bar p}(t)]|(\O' \setminus \O)\leq |\dot {\bar p}(t)|(\O' \setminus \O)=0$. It implies that for a.e. $t\in [0,T^*]$, we have $| \dot{ \bar p}(t) | = [ \bar{\sigma}(t)\cdot\dot{ \bar p}(t) ]$ in $\mathcal{M}(\Omega')$.

\medskip

\noindent \textbf{Step 2: Spatial translation.} For every $h\in \R^n$ be such that $|h|<\delta$, we define the translation operator $\tau_h$ of a generic function $F :\O' \times (0,T^*) \to \R$ by
\[
\tau_h F(x,t) = F(x+h,t) \quad \text{ for all }(x,t) \in \O \times (0,T^*).
\]
Then, $\tau_h\bar f \in H^1([0,T^*];L^2(\O))$,  $\tau_h \bar{u}\in W^{2,\infty}([0,T^*];L^2(\O))$ and $\tau_h \bar{\sigma} \in W^{1,\infty}([0,T^*];L^2(\O;\R^n))$ with $\| \tau_h \bar{\sigma} \|_{L^\infty(\O\times (0,T^*))}\le 1$.  According to \cite[Remark 3.18]{AmbrosioFuscoPallara}, for all $t \in [0,T^*]$, we have that $D(\tau_h \bar u)(t)={\tau_{-h}}_\# D\bar u(t)$ (the push-forward of the measure $D\bar u(t)$ by the mapping $x \mapsto x-h$) which implies that $\tau_h \bar{u} \in \mathcal{C}^{0,1}([0,T^*];BV(\O))$. Finally, since for all $t \in [0,T^*]$ the push-forward measure ${\tau_{-h}}_\# \bar{p}(t) \in \mathcal{M}(\O;\R^n)$ satisfies
\[
{\tau_{-h}}_\# \bar{p}(t) = D\tau_h \bar{u}(t) - \tau_h \bar{\sigma}(t),
\]
the regularity of $\tau_h \bar{u}$ and $\tau_h \bar{\sigma}$ ensures that ${\tau_{-h}}_\# \bar{p} \in \mathcal{C}^{0,1}([0,T^*];\mathcal{M}(\O;\R^n))$.

\medskip

\noindent \textbf{Step 3 : The translation of the solution is a solution.} We define the translation of the solution $(u_h,\sigma_h,p_h):=(\tau_h \bar u,\tau_h \bar \sigma,{\tau_{-h}}_\# \bar p)$ and $f_h:=\tau_h \bar f$. Let us show that
\begin{enumerate}
\item\label{1} Regularity properties: we have $u_h \in W^{2,\infty}([0,T^*];L^2(\O)) \cap \mathcal{C}^{0,1}([0,T^*];BV(\O))$, $\sigma_h \in W^{1,\infty}([0,T^*];L^2(\Omega;\R^n))$ and $ p_h \in \mathcal{C}^{0,1}([0,T^*];\mathcal{M}(\Omega;\R^n))$;
\item\label{2} Equation of motion: $\ddot u_h - \Div  \sigma_h = f_h$ in $L^\infty(0,T^*;L^2(\Omega))$;
\item\label{3} Additive decomposition: for every $t\in [0,T^*]$, 
$$Du_h(t)  =   \sigma_h(t)+ p_h(t) \quad \text{ in }\mathcal M(\O;\R^n);$$
\item\label{3,5} Stress constraint: for every $t\in [0,T^*]$, $|\sigma_h(t)|\leq 1$ a.e. in $\O$;
\item\label{4} Flow rule: for a.e. $t\in [0,T^*]$, $| \dot p_h(t) | = [\sigma_h(t)\cdot \dot p_h(t) ]$ in $\mathcal{M}(\Omega)$;
\item\label{5} Boundary condition: for a.e. $t\in [0,T^*]$,
\[
\sigma_h(t)\cdot \nu+\psi'_\lambda(\dot u_h(t)) =0 \quad \HH^{n-1}\text{-a.e. on }\partial \O;
\]
\item\label{6} Initial conditions:
\[
u_h(0) = \tau_hu_0, \quad \dot  u_h(0) = \tau_h v_0, \quad \sigma_h(0)  = \tau_h\sigma_0, \quad p_h(0)   =  {\tau_{-h}}_\# p_0. 
\]
\end{enumerate}
Items \ref{1}, \ref{3} and \ref{3,5} have already been proved in step 2. Due to the definition of $\tau_h$, items \ref{2} and \ref{6} are automatically satisfied. Let us examine the point \ref{5}. Since $\supp(u,\sigma) \subset K^* \times [0,T^*]$, then for all $t \in [0,T^*]$,  $\supp(u_h(t),\sigma_h(t)) \subset (K^*+h)$ which is a compact subset of $\O$ as long as $|h|<\delta$. Therefore, for all $t \in [0,T^*]$, we have $\dot u_h(t)=0$ and $\sigma_h(t)\cdot \nu=0$ on $\partial \O$, and thus \ref{5} holds. It remains to show item \ref{4}. By Definition \ref{def_Sigma_p} of duality, we know that for a.e. $t\in [0,T^*]$ and for all $\varphi \in \mathcal{C}^\infty_c(\Omega)$,
$$ \Braket{[ \dot p_h(t)\cdot \sigma_h(t) ],\varphi} 
=  - \int_\Omega \dot u_h(t) \Div \sigma_h(t) \varphi\dx - \int_\Omega \sigma_h(t)\cdot \nabla \varphi \dot u_h(t)\dx   - \int_\Omega \dot \sigma_h(t)\cdot \sigma_h(t) \varphi\dx.$$
Using a change of variables (and since $|h|<\delta$), we get that
\begin{multline*}
 \Braket{[ \dot p_h(t)\cdot \sigma_h(t) ],\varphi} 
=  - \int_\Omega \dot {\bar u}(t) \Div \bar \sigma(t) \varphi(\cdot -h)\dx\\ - \int_\Omega\bar  \sigma(t)\cdot \nabla \varphi(\cdot -h) \dot{\bar  u}(t)\dx 
 - \int_\Omega \dot {\bar \sigma}(t)\cdot \bar \sigma(t) \varphi(\cdot -h)\dx,
\end{multline*}
and according to the integration by parts formula (Proposition~\ref{IPP_sig_u_varphi}), we obtain
$$\Braket{[ \dot p_h(t)\cdot\sigma_h(t) ],\varphi} = \Braket{|\dot{ \bar p}|,\varphi(\cdot -h)} = \Braket{|\dot p_h|,\varphi},$$
where we used that $\varphi(\cdot -h) \in \mathcal{C}^\infty_c(\Omega')$ and  $| \dot{ \bar p}(t) | = \left[ \bar{\sigma}(t)\cdot \dot{ \bar p}(t) \right]$ in $\mathcal{M}(\Omega')$.

\medskip

 \textbf{Step 4 : Regularity.}
Now  that $(u_h,\sigma_h,p_h)$ is a solution associated to the initial condition $(\tau_h u_0,\tau_h v_0, \tau_h \sigma_0, {\tau_{-h}}_\# p_0)$ and source term $f_h$, Proposition~\ref{prop_kato_ineq} (with the test function $\varphi(x,t) = T^*-t$) then implies that
\begin{multline*}
\int_0^{T^*} \int_{\Omega} \left( \dot u_h -\dot {u}\right)^2\dxdt + \int_0^T \int_{\Omega} \left|\sigma_h -{\sigma}\right|^2 \dxdt \\
\le T^*\int_{\Omega} \left(\tau_h v_0 -{v_0}\right)^2  \dx + T^*\int_{\Omega} \left|\tau_h\sigma_0 -{\sigma_0}\right|^2 \dx+ 2T^* \int_0^{T^*} \int_{\Omega} |\tau_h f-f| |\tau_h \dot u -\dot u| \dxdt.
\end{multline*}
According to Young's inequality, and since $v_0 \in H^1(\Omega)$, $\sigma_0 \in H^1(\Omega;\R^n)$, $f \in H^1(\O\times (0,T^*))$, we get that
\begin{multline*}
\int_0^{T^*} \int_{\Omega} \left(  \dot u_h -\dot {u}\right)^2\dxdt + \int_0^{T^*} \int_{\Omega} \left|\sigma_h -{\sigma}\right|^2 \dxdt \\
\le C(T^*) |h|^2 \left( \left\| \nabla v_0 \right\|_2^2 + \left\| \nabla \sigma_0 \right\|_2^2+\int_0^{T^*}\|\nabla f(t)\|_2^2\dt \right),
\end{multline*}
from which we get that $\dot u\in L^2(0,T^*;H^1(\O))$ and $\sigma\in L^2(0,T^*;H^1(\O;\R^n))$. Writting for all $t \in [0,T]$,
$$u(t)=u_0 +\int_0^t \dot u(s)\ds$$
as a Bochner integral in $L^2(\O)$, we obtain that $u \in L^2(0,T^*;H^1(\O))$, or still $u \in H^1([0,T^*];H^1(\O))$. Finally, we have $p=\nabla u-\sigma\in L^2(0,T^*;L^2(\O;\R^n))$ and $\dot p=\nabla \dot u-\dot \sigma\in L^2(0,T^*;L^2(\O;\R^n))$ which shows the desired regularity $p \in H^1([0,T^*];L^2(\O;\R^n))$.
\end{proof}

\begin{remark}
A similar result holds for the variational solutions to the elasto-plastic problem with homogeneous Dirichlet or Neumann boundary conditions given by Theorem \ref{DN}.
\end{remark}

\section{Dissipative formulation}
\label{sec:6}

In this last Section, we establish precise links between the variational and the dissipative formulations by making rigorous the formal computations done in Subsection \ref{sec:entrop}. We show that any variational solution generates a dissipative solution by performing the manipulations on the approximate elasto-visco-plastic model (for which the solution is essentially smooth), and then by passing to the limit as the viscosity parameter $\e \to 0$. A partial converse statement is proved, provided the dissipative solutions are smoother in time. We then employ measure theoretic arguments to establish that a solution of the variational problem can be constructed.

\medskip

In order to define precisely the dissipative formulation of the dynamical elasto-plastic problem, we introduce the convex set
$$K:=\R \times B \subset \R^{n+1},$$
and, for $i=1,\ldots,n$, we define the matrices 
$$A_i:=-2 \boldsymbol e_1 \odot \boldsymbol e_{i+1},$$ 
where $\{\boldsymbol e_1,\ldots,\boldsymbol e_n\}$ stands for the canonical basis of $\R^n$. For all $x \in \partial \O$, we denote the boundary matrix by
$$A_\nu(x):=\sum_{i=1}^n A_i \nu_i(x),$$
where $\nu(x)$ is the outer unit normal to $\O$ at $x\in \partial \O$. In addition, for each $\lambda>0$ and all $x \in \partial \O$, we define the matrix $M(x) \in \mathbb M^{(n+1)\times (n+1)}$ by
$$M(x):=\lambda^{-1}\boldsymbol e_1 \otimes  \boldsymbol e_1 + \lambda \sum_{i,j=1}^{n} \nu_i(x) \nu_j(x) \boldsymbol e_{i+1} \odot \boldsymbol e_{j+1}.$$
In this section, we always assume that $\O$ is of class $\mathcal C^1$, so the normal $\nu \in \mathcal C(\partial \O)$, and thus both matrices $A_\nu$ and $M \in \mathcal C(\partial \O;\mathbb M^{(n+1)\times (n+1)})$.

\begin{definition}\label{def:entrop}
Let $U_0 \in L^2(\O;K)$ be an initial data and $F \in L^2(\O \times (0,T);\R^{n+1})$ be a source term. A function $U \in L^2(\Omega \times (0,T); K)$ is a {\it dissipative solution to the elasto-plastic problem} if for all constant vector $\kappa \in K$ and all $\varphi \in W^{1,\infty}(\Omega \times (0,T))$ with $\varphi \geq 0$,
\begin{multline*}
\int_{0}^T \int_{\Omega}\left(  |U-\kappa|^2\dot \varphi + \sum_{i=1}^n   A_i(U-\kappa)\cdot (U-\kappa)\partial_{x_i}\varphi\right) \dxdt \\
+ \int_\Omega \left|U_0-\kappa\right|^2 \varphi(0)\dx + 2\int_{0}^T \int_{\Omega} F\cdot (U-\kappa)\varphi\dxdt +\int_0^T\int_{\partial \Omega} M \kappa^{+}\cdot \kappa^{+} \varphi \dd \mathcal H^{n-1} \dt \ge 0,\\
\end{multline*}
where $\kappa_+$ denotes the orthorgonal projection of $\kappa$ onto $ \Ker(A_\nu+M) \cap \im A_\nu$.
\end{definition}

\begin{remark}\label{remark:algebraic}
Using elementary algebraic computations, we have for all $\kappa=(k,\tau) \in \R^{n+1}$ and all $\xi \in \R^n$
$$\sum_{i=1}^n \xi_i A_i \kappa \cdot \kappa=-2 (\tau \cdot \xi) k.$$
In addition, according to \cite[Lemma 1]{MDS}, we have that
$$\R^{n+1} = \Ker A_\nu  \oplus   \big(\Ker(A_{\nu}- {M})\cap \im A_{\nu} \big)   \oplus  \big( \Ker(A_{\nu}+ {M}) \cap \im A_{\nu} \big).$$
For each $\kappa=(k,\tau) \in \R^{n+1}$, denoting by $\kappa^{(0)}$, $\kappa^-$ and $\kappa^+$ the projection of $\kappa$ onto $ \Ker A_\nu$, $\Ker(A_{\nu}- {M})\cap \im A_{\nu} $ and $\Ker(A_{\nu}+ {M}) \cap \im A_{\nu} $, respectively, we get that $\kappa=\kappa^{(0)}+\kappa^-+\kappa^+$ where
$$\begin{cases}
\kappa^{(0)}=(0,\tau-(\tau\cdot\nu)\nu),\\
\kappa^-=\left(\frac{k-\lambda \tau\cdot \nu}{2},\left(\frac{\tau\cdot \nu}{2}-\frac{k}{2\lambda}\right)\nu\right),\\
\kappa^+=\left(\frac{k+\lambda \tau\cdot \nu}{2},\left(\frac{\tau\cdot \nu}{2}+\frac{k}{2\lambda}\right)\nu\right).
\end{cases}$$
In particular
$$M \kappa^\pm \cdot \kappa^\pm=2\lambda \left( \frac{k}{2\lambda} \pm \frac{\tau\cdot \nu}{2} \right) ^2.$$
\end{remark}

The following result states that variational solutions to the elasto-plastic problem generate dissipative solutions.

\begin{proposition}
Let $(u,\sigma,p)$  be the variational solution to the elasto-plastic problem given by Theorem \ref{final_theorem_eps_eq_zero} for the initial data $(u_0,v_0,\sigma_0,p_0)$ and the source term $f$, and define $U:=(\dot u,\sigma)$. Then $U \in L^2(\O \times (0,T);K)$ is a dissipative solution to the elasto-plastic problem according to Definition \ref{def:entrop} for the initial data $U_0=(v_0,\sigma_0)$ and the source term $F=(f,0)$.
\end{proposition}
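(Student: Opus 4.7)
The plan is to perform the formal computations of Subsection~\ref{sec:entrop} rigorously on the elasto-visco-plastic regularization, where $(u_\varepsilon,\sigma_\varepsilon,p_\varepsilon)$ from Theorem~\ref{final_theorem_at_level_epsilon} has enough Sobolev regularity to integrate by parts, and then to pass to the limit as $\varepsilon\to 0$ using the strong convergences $\dot u_\varepsilon\to\dot u$, $\sigma_\varepsilon\to\sigma$ in $\mathcal{C}([0,T];L^2)$ and $\sqrt\varepsilon\,\nabla\dot u_\varepsilon\to 0$ in $L^2$ provided by Proposition~\ref{prop:strong_conv}. I would set $U_\varepsilon := (\dot u_\varepsilon,\sigma_\varepsilon)$, fix $\kappa = (k,\tau)\in K = \R\times B$ and a non-negative $\varphi\in W^{1,\infty}(\Omega\times(0,T))$, and exploit the fact that the viscous problem is driven by the boundary source $g_\varepsilon = \varepsilon\nabla v_0\cdot\nu$, which converges to $0$ in $L^2(\partial\Omega)$.

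The core of the argument would be to combine two inequalities. First, I would test the weak equation of motion \eqref{eq_motion_eps_equal_zero} with $2(\dot u_\varepsilon - k)\varphi \in L^2(0,T;H^1(\Omega))$, rewrite $2\ddot u_\varepsilon(\dot u_\varepsilon - k) = \partial_t(\dot u_\varepsilon - k)^2$, integrate by parts in time, and discard the non-negative $t = T$ boundary term. Second, I would dot the time-differentiated additive decomposition $\nabla\dot u_\varepsilon = \dot\sigma_\varepsilon + \dot p_\varepsilon$ with $2(\sigma_\varepsilon - \tau)\varphi$, treat $2\dot\sigma_\varepsilon\cdot(\sigma_\varepsilon - \tau) = \partial_t|\sigma_\varepsilon - \tau|^2$ analogously, and invoke the Perzyna flow rule \eqref{snd_eq_continuous} with $|\tau|\le 1$ to obtain $2\dot p_\varepsilon\cdot(\sigma_\varepsilon - \tau) \ge 2\varepsilon|\dot p_\varepsilon|^2 \ge 0$, so the plastic contribution can be dropped. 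Summing the two inequalities, the cross term reduces to $2\tau\cdot\nabla\dot u_\varepsilon$, which after integration by parts in space (using that $\tau$ is constant) combines with the gradient term $2\sigma_\varepsilon(\dot u_\varepsilon - k)\cdot\nabla\varphi$ to reproduce, via the identity $\sum_i \xi_i A_i\kappa\cdot\kappa = -2(\tau\cdot\xi)k$ of Remark~\ref{remark:algebraic}, exactly $\int\sum_i A_i(U_\varepsilon - \kappa)\cdot(U_\varepsilon - \kappa)\partial_{x_i}\varphi$ plus a surface integral $2\int_{\partial\Omega}(\tau\cdot\nu)(\dot u_\varepsilon - k)\varphi$.

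Next I would use the viscous boundary condition \eqref{eq:bdry_cond_eps} to eliminate $\varepsilon\nabla\dot u_\varepsilon\cdot\nu$, discard the non-negative $2\varepsilon\int|\nabla\dot u_\varepsilon|^2\varphi$, and collect the surviving surface integrals into $2\int_{\partial\Omega}[\tau\cdot\nu + \lambda^{-1}\dot u_\varepsilon - g_\varepsilon](\dot u_\varepsilon - k)\varphi$. The pointwise square completion
\[
2(\dot u_\varepsilon - k)\bigl[\lambda^{-1}\dot u_\varepsilon + \tau\cdot\nu - g_\varepsilon\bigr] \ge -\frac{\lambda}{2}\bigl(\lambda^{-1}k + \tau\cdot\nu - g_\varepsilon\bigr)^2
\]
then yields the required lower bound; by the explicit projection formula $M\kappa^+\cdot\kappa^+ = \frac{\lambda}{2}(\lambda^{-1}k + \tau\cdot\nu)^2$ from Remark~\ref{remark:algebraic}, this right-hand side equals $-M\kappa^+\cdot\kappa^+$ plus a remainder linear and quadratic in $g_\varepsilon$, which vanishes as $\varepsilon\to 0$. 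The leftover viscous contribution $2\varepsilon\int\nabla\dot u_\varepsilon\cdot\nabla\varphi(\dot u_\varepsilon - k)$ similarly tends to $0$ by Cauchy--Schwarz, using $\sqrt\varepsilon\,\nabla\dot u_\varepsilon\to 0$ in $L^2$ and $\dot u_\varepsilon$ bounded in $L^2$. Passing to the $\liminf_{\varepsilon\to 0}$ and using the strong convergence $U_\varepsilon\to U$ in $L^2(\Omega\times(0,T);\R^{n+1})$ to handle the four volume and initial integrals then recovers exactly the inequality of Definition~\ref{def:entrop}.

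The main delicate point will be the boundary bookkeeping: isolating the correct combination $\tau\cdot\nu + \lambda^{-1}\dot u_\varepsilon - g_\varepsilon$ among the various surface contributions produced by the integrations by parts, and verifying via the completed square that the resulting quadratic form coincides with the abstract projection expression $M\kappa^+\cdot\kappa^+$. This relies crucially on the explicit characterization of $\kappa^\pm$ from Remark~\ref{remark:algebraic}, and on the observation that the artificial boundary source $g_\varepsilon$ introduced in Theorem~\ref{final_theorem_at_level_epsilon} only generates lower-order perturbations that vanish uniformly as $\varepsilon\to 0$.
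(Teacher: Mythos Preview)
Your proposal is correct and follows essentially the same strategy as the paper: work at the visco-plastic level, test the weak equation of motion with $(\dot u_\varepsilon-k)\varphi$, use the additive decomposition and the flow rule to drop the non-negative plastic term, integrate by parts in time and space, and pass to the limit via Proposition~\ref{prop:strong_conv}.

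The only noteworthy difference lies in the treatment of the boundary contribution. The paper first passes to the limit in the boundary integral, using the strong convergence $\dot u_\varepsilon \to \lambda\psi'_\lambda(\dot u)$ in $L^2(0,T;L^2(\partial\Omega))$ from \eqref{eq:strong_conv5} together with the weak convergence of the normal traces, and \emph{then} uses the relaxed boundary condition $\sigma\cdot\nu+\psi'_\lambda(\dot u)=0$ to factor the resulting expression as $M\kappa^+\cdot\kappa^+$ minus a non-negative square. You instead substitute the $\varepsilon$-boundary condition \eqref{eq:bdry_cond_eps} directly and complete the square at level~$\varepsilon$, obtaining the bound $B_\varepsilon \le \int_{\partial\Omega}\tfrac{\lambda}{2}(\lambda^{-1}k+\tau\cdot\nu-g_\varepsilon)^2\varphi$ before taking the limit. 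Your route is marginally more self-contained, since it only requires $g_\varepsilon\to 0$ in $L^2(\partial\Omega)$ and avoids invoking \eqref{eq:strong_conv5} or the already-established relaxed boundary condition; the paper's route, on the other hand, makes the role of the limiting boundary law more transparent. Both lead to the same inequality.
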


\begin{proof}
The proof is very close to that of Proposition \ref{prop_kato_ineq}.  For fixed $\e >0$, let $(u_\e,\sigma_\e,p_\e)$ be the solution of the elasto-visco-plastic problem given by Theorem \ref{final_theorem_at_level_epsilon} for the initial condition $(u_{0},v_{0},\sigma_0,p_{0})$ and the source terms $f$ and $g_\e=\e\nabla v_0\cdot \nu$. Let $k \in \R$ and $\tau \in B$, taking $(\dot u_\e - k) \varphi \in L^2(0,T;H^1(\O))$ as test function in the variational formulation \eqref{eq_motion_eps_equal_zero}, and using the additive decomposition \eqref{decomposition_nablau_eps} together with the boundary condition \eqref{eq:bdry_cond_eps} yields
\begin{multline*}
\int_0^T \int_\Omega \ddot u_\e \left(\dot u_\e - k\right) \varphi\dxdt + \int_0^T \int_\Omega \dot \sigma_\e\cdot (\sigma_\e-\tau)\varphi \dxdt + \int_0^T \int_\O(\sigma_\e-\tau) \cdot \dot p_\e \varphi\dxdt\\
 + \e\int_0^T\int_\O |\nabla \dot u_\e|^2 \varphi\dxdt+ \int_0^T \int_\Omega (\sigma_\e-\tau)\cdot \nabla \varphi (\dot u_\e-k)\dxdt+ \int_0^T \int_\Omega \tau \cdot \nabla \big((\dot u_\e-k)\varphi\big)\dxdt\\
+\e\!\! \int_0^T\int_\O\! \nabla \dot u_\e \cdot \nabla \varphi (\dot u_\e-k)\dxdt - \!\int_0^T \int_{\partial \Omega} \!\!(\sigma_\e+\e\nabla \dot u_\e) \cdot \nu  (\dot u_\e -k) \varphi\dd \mathcal{H}^{n-1} \dt
\!= \!\!\! \int_0^T \int_\Omega f(\dot u_\e - k) \varphi\dxdt.
\end{multline*}
Using next the flow rule \eqref{snd_eq_continuous} and the fact that $\tau \in B$, we infer that $(\sigma_\e-\tau) \cdot \dot p_\e\geq 0$ a.e. in $\O \times (0,T)$, and thus
\begin{multline}\label{eq:entrop1}
\int_0^T \int_\Omega \ddot u_\e \left(\dot u_\e - k\right) \varphi\dxdt + \int_0^T \int_\Omega \dot \sigma_\e\cdot (\sigma_\e-\tau)\varphi \dxdt
+ \int_0^T \int_\Omega (\sigma_\e-\tau)\cdot \nabla \varphi (\dot u_\e-k)\dxdt\\
+ \int_0^T \int_\Omega \tau \cdot \nabla \big((\dot u_\e-k)\varphi\big)\dxdt+\e \int_0^T\int_\O \nabla \dot u_\e \cdot \nabla \varphi (\dot u_\e-k)\dxdt \\
- \int_0^T \int_{\partial \Omega} (\sigma_\e+\e\nabla \dot u_\e) \cdot \nu  (\dot u_\e -k) \varphi\dd \mathcal{H}^{n-1} \dt\leq  \int_0^T \int_\Omega f(\dot u_\e - k) \varphi\dxdt.
\end{multline}
Since $\dot u_\e\in W^{1,\infty}([0,T];L^2(\O))$ and $\sigma_\e\in W^{1,\infty}([0,T];L^2(\O;\R^n))$, integrating by parts with respect to time yields
\begin{multline}\label{eq:entrop2}
2 \int_0^T \int_\Omega \ddot u_\e (\dot u_\e-k)\varphi\dxdt =   - \int_0^T \int_\Omega (\dot u_\e-k)^2 \dot \varphi\dxdt \\
+ \int_\Omega (\dot u_\e(T)-k)^2\varphi(T) \dx - \int_\Omega (v_{0}-k)^2\varphi(0) \dx,
\end{multline}
and similarly
\begin{multline}\label{eq:entrop3}
2 \int_0^T \int_\Omega (\sigma_\e-\tau)\cdot\dot \sigma_\e \varphi \dxdt=  -  \int_0^T \int_\Omega |\sigma_\e-\tau|^2 \dot \varphi\dxdt  \\
+ \int_\Omega |\sigma_\e(T)-\tau|^2\varphi(T) \dx - \int_\Omega |\sigma_0-\tau|^2\varphi(0) \dx.
\end{multline}
In addition, since $\tau$ is constant and $\dot u_\e \in L^2(0,T;H^1(\O))$, using an integration by parts formula with respect to the space variable leads to
\begin{equation}\label{eq:entrop4}
\int_0^T\int_\O \tau \cdot \nabla \big((\dot u_\e-k)\varphi\big) \dxdt=\int_0^T\int_{\partial \O} (\tau \cdot \nu) (\dot u_\e-k) \varphi \dd\HH^{n-1}\dt.
\end{equation}
Gathering \eqref{eq:entrop1}--\eqref{eq:entrop4} yields
\begin{multline*}
\int_0^T \int_\Omega (\dot u_\e-k)^2 \dot \varphi\dxdt + \int_0^T \int_\Omega |\sigma_\e-\tau|^2 \dot \varphi\dxdt + \int_\Omega (v_{0}-k)^2\varphi(0) \dx +\int_\Omega |\sigma_0-\tau|^2\varphi(0) \dx\\
 - 2\int_0^T \int_\Omega (\sigma_\e-\tau) \cdot \nabla \varphi (\dot u_\e -k)\dxdt -2 \e \int_0^T\int_\O \nabla \dot u_\e \cdot \nabla \varphi (\dot u_\e-k)\dxdt\\
  + 2\int_0^T \int_\Omega f(\dot u_\e-k)\varphi\dxdt+2 \int_0^T \int_{\partial \Omega} \big((\sigma_\e+\e\nabla \dot u_\e) \cdot \nu-\tau \cdot \nu\big)(\dot u_\e- k) \varphi \dd \mathcal{H}^{n-1} \dt \geq 0,
\end{multline*}
and passing to the limit as $\e \to 0$, Proposition \ref{prop:strong_conv} and \eqref{eq:strong_conv5} imply that
\begin{multline*}
\int_0^T \int_\Omega (\dot u-k)^2 \dot \varphi\dxdt + \int_0^T \int_\Omega |\sigma-\tau|^2 \dot \varphi\dxdt + \int_\Omega (v_{0}-k)^2\varphi(0) \dx +\int_\Omega |\sigma_0-\tau|^2\varphi(0) \dx\\
 - 2\int_0^T \int_\Omega (\sigma-\tau) \cdot \nabla \varphi (\dot u -k)\dxdt + 2\int_0^T \int_\Omega f(\dot u-k)\varphi\dxdt \\
 +2 \int_0^T \int_{\partial \Omega} (\sigma \cdot \nu-\tau \cdot \nu)(\lambda \psi'_\lambda(\dot u)- k) \varphi \dd \mathcal{H}^{n-1} \dt \geq 0.
\end{multline*}
Thanks to algebraic manipulations, we have for a.e. $(x,t) \in \partial \O \times (0,T)$,
\begin{multline*}
 2(\sigma\cdot \nu-\tau\cdot \nu )( \lambda \psi'_\lambda(\dot u) - k) \\
=   2\lambda  \left( \left( \frac{ \lambda \psi'_\lambda(\dot u)-k}{2\lambda} + \frac{\sigma\cdot \nu-\tau\cdot \nu}{2} \right) ^2- \left( \frac{\lambda \psi'_\lambda(\dot u)-k}{2\lambda} - \frac{\sigma\cdot \nu-\tau\cdot \nu}{2} \right) ^2 \right) \\
=  2\lambda \left( \left( \frac{k}{2\lambda} + \frac{\tau\cdot \nu}{2} \right) ^2 - \left( \frac{\lambda\psi'_\lambda(\dot u)-k}{2\lambda} - \frac{\sigma\cdot \nu-\tau\cdot \nu}{2} \right) ^2\right),
\end{multline*}
where we used the boundary condition $\sigma\cdot\nu+\psi'_\lambda(\dot u)=0$ on $\partial \O \times (0,T)$. Finally, from Remark \ref{remark:algebraic}, it follows that
\begin{multline*}
\int_{0}^T \int_{\Omega}  \left|U-\kappa\right|^2\dot \varphi\dxdt +  \sum_{i=1}^n \int_{0}^T \int_{\Omega} A_i (U-\kappa)\cdot (U-\kappa)\partial_{x_i}\varphi\dxdt \\
 + \int_\Omega \left|U_0-\kappa\right|^2 \varphi(0)\dx + 2\int_{0}^T \int_{\Omega} F\cdot (U-\kappa)\varphi\dxdt  +\int_0^T\int_{\partial \Omega} M \kappa^{+}\cdot \kappa^{+} \varphi \dd \mathcal H^{n-1} \dt \ge 0,
\end{multline*}
as required.
\end{proof}

We finally show that, provided additional regularity assumptions, any dissipative solution to the elasto-plastic problem generates a variational solution.

\begin{proposition}
Assume that $\O \subset \R^n$ is a bounded open set with $\mathcal C^2$ boundary. Let $U=(v,\sigma)\in W^{1,\infty}([0,T];L^2(\O;K))$ be a dissipative solution to the elasto-plastic problem according to Definition \ref{def:entrop} for the initial data $U_0=(v_0,\sigma_0) \in L^2(\O;K)$ and the source term $F=(f,0)$ with $f \in L^\infty(0,T;L^2(\O))$. Then, for all $u_0 \in BV(\O) \cap L^2(\O)$,  there exists a triple $(u,\sigma,p)$ which is a variational solution to the elasto-plastic problem associated to the initial data $(u_0,v_0,\sigma_0,Du_0-\sigma_0)$ and the source term $f$.
\end{proposition}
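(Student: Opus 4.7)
The strategy is to extract from the dissipative inequality the equation of motion, the relaxed boundary condition, and the flow rule, and to construct $u$ and $p$ by time-integration: $u(t):=u_0+\int_0^t v(s)\ds$ and $p(t):=Du(t)-\sigma(t)$. The time regularity $U=(v,\sigma)\in W^{1,\infty}([0,T];L^2(\O;K))$ is essential: it allows integration by parts in time in the entropy term, reducing the global dissipative inequality to a pointwise-in-time statement. The stress constraint $|\sigma|\le 1$ is automatic from $U(t)\in K$ a.e.

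First I would specialize the dissipative inequality to separated test functions $\varphi(x,t)=\chi(x)\psi(t)$, with $\chi\in\mathcal C^1(\overline\O)$ and $\psi\in\mathcal C^\infty_c([0,T))$ both nonnegative, approximate $\psi$ by $\mathbf 1_{[t_1,t_2]}$, integrate by parts in $t$, and send $t_2\to t_1=t$. This yields, for a.e.\ $t\in(0,T)$ and every $\kappa=(k,\tau)\in K$,
\begin{equation*}
\int_\O\bigl[(v-k)\dot v+(\sigma-\tau)\cdot\dot\sigma\bigr]\chi\dx+\int_\O(\sigma-\tau)\cdot\nabla\chi\,(v-k)\dx-\int_\O f(v-k)\chi\dx\le \int_{\partial\O}\frac{(k+\lambda\tau\cdot\nu)^2}{4\lambda}\chi\dd\HH^{n-1}.
\end{equation*}
Taking $\chi\in\mathcal C^\infty_c(\O)$ and $\tau=0$, the inequality is affine in $k\in\R$; its coefficient of $k$ must vanish, giving the equation of motion $\dot v-\Div\sigma=f$ in $\mathcal D'(\O)$ and hence in $L^2(0,T;L^2(\O))$. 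Substituting this back, taking $k=0$, and using $\Div(\sigma\chi)=\chi\Div\sigma+\sigma\cdot\nabla\chi$, the inequality reduces to $\int v\,\Div(\sigma\chi)\dx+\int\sigma\cdot\dot\sigma\,\chi\dx\le\tau\cdot Y(\chi)$ for all $\tau\in B$, where $Y(\chi):=\int(v\nabla\chi+\dot\sigma\chi)\dx$; optimizing over $\tau$ yields $|Y(\chi)|\le -\int v\,\Div(\sigma\chi)\dx-\int\sigma\cdot\dot\sigma\,\chi\dx$, a distributional control on $Dv-\dot\sigma$.

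A partition-of-unity argument with $\chi_i\in\mathcal C^\infty_c(\O)$ summing to $1$ on compact exhaustions $K_m\nearrow\O$ then gives $\sum_i|Y(\chi_i,t)|\le -\tfrac12\tfrac{d}{dt}\|U(t)\|_2^2+\int_\O fv\dx$, the $\nabla\chi_i$-contributions cancelling because $\sum_i\nabla\chi_i=0$ on $K_m$. By the dual characterization of the total variation of a vector-valued distribution, this identifies $Dv(t)-\dot\sigma(t)$ as a Radon measure $\dot p(t)\in\mathcal M(\O;\R^n)$, and using $\dot v,\dot\sigma\in L^\infty([0,T];L^2)$ one checks $\dot p\in L^\infty([0,T];\mathcal M(\O;\R^n))$; together with $u_0\in BV(\O)$ this gives $u\in\mathcal C^{0,1}([0,T];BV(\O))$ and $p:=Du-\sigma\in\mathcal C^{0,1}([0,T];\mathcal M(\O;\R^n))$. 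The flow rule $|\dot p|=[\sigma\cdot\dot p]$ follows from the saturation of $|\dot p|\ge[\sigma\cdot\dot p]$ (Remark~\ref{rmk:ineq1}) forced by the optimality of $\tau$, and for the boundary condition, allowing $\chi\in\mathcal C^1(\overline\O)$ with nontrivial trace and recognizing $(k+\lambda\tau\cdot\nu)^2/(4\lambda)$ as the Fenchel--Young gap for the conjugate pair $(\psi_\lambda,\psi_\lambda^*)$ with $\psi_\lambda^*(z)=\lambda z^2/2+I_{[-1,1]}(z)$ (Remark~\ref{rem:psi}), optimization over $(k,\tau)\in K$ forces the gap to vanish iff $\sigma\cdot\nu+\psi_\lambda'(v)=0$, yielding the relaxed boundary condition in $L^2(0,T;L^2(\partial\O))$. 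The main obstacle is the measure-theoretic step above: one must track carefully the cutoff near $\partial\O$ in the partition of unity, and appeal to the $\mathcal C^2$-regularity of $\partial\O$ and the Anzellotti pairing (Proposition~\ref{IPP_sig_u_varphi}) to recover the full total variation $|\dot p(t)|(\O)$ from its values on the compact exhaustions $K_m$.
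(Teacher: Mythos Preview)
Your outline follows the same route as the paper: define $u(t)=u_0+\int_0^t v$, $p=Du-\sigma$; extract the equation of motion from the affine dependence on $k$; get the measure bound on $\dot p(t)$ by optimizing over $\tau\in B$; identify the flow rule as saturation of Remark~\ref{rmk:ineq1}; and read off the boundary condition via the conjugate pair $(\psi_\lambda,\psi_\lambda^*)$. Two points deserve correction.

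First, your partition-of-unity argument for showing $\dot p(t)\in\mathcal M(\O;\R^n)$ is more complicated than needed and the bookkeeping you describe (cancellation of $\nabla\chi_i$ on $K_m$, then passing $m\to\infty$) is delicate because the cancellation only holds on $K_m$ while the integrals run over $\O$. The paper's approach is cleaner: the $\kappa=0$ case of the localized inequality already says that the linear functional $\phi\mapsto -\int_\O(\dot u\ddot u+\sigma\cdot\dot\sigma+(\sigma\cdot\nabla\phi)\dot u-f\dot u)\phi\dx$ is nonnegative on $\phi\ge 0$, hence defines a nonnegative Radon measure $\mu(t)$ supported in $\overline\O$, with $\mu(t)(\R^n)$ bounded by $L^2$ data. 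Then the $\tau$-optimization gives $|\langle\dot p(t),\phi\rangle|\le\langle\mu(t),\phi\rangle$ for all $\phi\ge 0$ in $\mathcal C^1_c(\O)$, which is exactly $|\dot p(t)|\le\mu(t)$ in $\mathcal M(\O)$. No exhaustion is needed.

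Second, and more importantly, you misplace the role of the $\mathcal C^2$ hypothesis. It is not used to control $|\dot p(t)|(\O)$ (that comes for free from $\mu(t)(\R^n)<\infty$) but to derive the boundary condition. When you ``allow $\chi\in\mathcal C^1(\overline\O)$ with nontrivial trace'', the inequality carries a bulk term $\int_\O\chi\,\dd[(\sigma-\tau)\cdot\dot p]$ in addition to the boundary integrals, and optimization over $(k,\tau)$ does not directly force the Fenchel--Young gap to vanish on $\partial\O$: the bulk term competes. The paper resolves this by testing with $\phi_\e(x)=(\e-\dist(x,\partial\O))_+/\e$ times a generic $\zeta\in W^{1,\infty}(\O)$, so that $\phi_\e\to 0$ in $L^1$ while $\phi_\e=1$ on $\partial\O$; the bulk duality term is rewritten via Proposition~\ref{IPP_sig_u_varphi} and the $\nabla\phi_\e$-contribution is evaluated by the coarea formula together with the trace limits \eqref{eq:sigmanu} and \eqref{eq:traceu}, both of which require $\partial\O\in\mathcal C^2$. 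This is the step where the boundary regularity genuinely enters, and your sketch does not supply it.
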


\begin{proof}
We split the proof into several steps.

\medskip

\noindent {\bf Step 1: Initial conditions for $v$ and $\sigma$.} According to \cite[Lemma 3]{MDS}, we infer that the initial condition is satisfied in the essential-limit sense, {\it i.e.}, for all $\phi \in \mathcal C^1_c(\O)$,
$$\lim_{t \to 0} \lim_{\alpha \to 0} \frac{1}{\alpha} \int_{t-\alpha}^t\int_\O |U(x,s)-U_0(x)|^2\phi(x)\dx\ds=0.$$
On the other hand, since $U \in W^{1,\infty}([0,T];L^2(\O;\R^{n+1}))$, then $U(t) \to U(0)$ strongly in $L^2(\O;\R^{n+1})$ as $t \to 0$. It thus follows that $U(0)=U_0$, or still $v(0)=v_0$ and $\sigma(0)=\sigma_0$. By Definition \ref{def:entrop} of dissipative solutions, and using the regularity assumption $U \in W^{1,\infty}([0,T];L^2(\O;\R^{n+1}))$, we can integrate by parts with respect to time to get that, for all  $\varphi \in \mathcal C^1_c(\R^n \times (0,T))$ with $\varphi \geq 0$ and all constant vector $\kappa \in K$,
\begin{multline*}
\int_0^T \int_\O \left(-2 \dot U \cdot  (U-\kappa) \varphi + \sum_{i=1}^n A_i(U-\kappa)\cdot  (U-\kappa)\partial_{x_i}\varphi\right)\dxdt\\
 + 2\int_0^T\int_\O F\cdot (U-\kappa)\varphi \dxdt+\int_0^T \int_{\partial \O}M\kappa^+\cdot\kappa^+\varphi \dd\HH^{n-1}\dt \geq 0,
\end{multline*}
or still
\begin{multline*}
\int_0^T \int_\O \left(-2 (U \cdot  \dot U) \varphi +\sum_{i=1}^n  (A_i U\cdot  U) \partial_{x_i}\varphi +2(F\cdot U) \varphi \right)\dxdt\\
+ 2\kappa \cdot  \int_0^T \int_\O \left( \dot U \varphi - \sum_{i=1}^n  (A_i U) \partial_{x_i}\varphi-F\varphi \right)\!\!\dxdt+\int_0^T \int_{\partial \O}\left(M\kappa^+\cdot\kappa^++ A_\nu \kappa\cdot \kappa\right)\varphi\dd\HH^{n-1}\dt \geq 0.
\end{multline*}
According to \cite[Lemma 1]{MDS}, we have that $M\kappa^+\cdot\kappa^++ A_\nu \kappa\cdot \kappa=M\kappa^-\cdot\kappa^-$, and thus
\begin{multline}\label{eq:hv1}
\int_0^T \int_\O \left(-2 (U \cdot  \dot U) \varphi +\sum_{i=1}^n  (A_i U\cdot U) \partial_{x_i}\varphi +2(F\cdot U) \varphi \right)\dxdt\\
+ 2\kappa \cdot  \int_0^T \int_\O \left( \dot U \varphi - \sum_{i=1}^n  (A_i U) \partial_{x_i}\varphi-F\varphi \right)\dxdt+\int_0^T \int_{\partial \O}M\kappa^-\cdot\kappa^-\varphi\dd\HH^{n-1}\dt \geq 0.
\end{multline}

\medskip

\noindent {\bf Step 2: Definition of $(u,\sigma,p)$ and first properties.} For all $t \in [0,T]$, let us define the displacement as
$$u(t):=u_0+\int_0^tv(s)\ds \text{ in } L^2(\O).$$
According to the regularity assumption on $U$, we infer that $u \in W^{2,\infty}([0,T];L^2(\O))$ and that $\sigma \in W^{1,\infty}([0,T];L^2(\O;\R^n))$. In addition, for all $t \in [0,T]$, 
we have $|\sigma(t)|\leq 1$ a.e. in $\O$. Note that, by construction $u(0)=u_0$, and by step 1, $\dot u(0)=v_0$. We also define the plastic strain by
\begin{equation}\label{eq:def_p}
p:=Du-\sigma \in W^{1,\infty}([0,T];H^{-1}(\O;\R^n)).
\end{equation}
We now use Remark \ref{remark:algebraic} to rewrite \eqref{eq:hv1} as 
\begin{multline}\label{eq:hv2}
-\int_0^T \int_\O \big(\dot u\ddot u\varphi +\sigma \cdot \dot \sigma \varphi +( \sigma\cdot \nabla \varphi)\dot u -f\dot u \varphi \big)\dx\\
\hfill+ k  \int_0^T \int_\O \left( \ddot u \varphi +\sigma\cdot \nabla \varphi-f\varphi \right)\dxdt+\tau\cdot \int_0^T \int_\O \left( \dot \sigma \varphi +\dot u \nabla \varphi \right)\dxdt\\
\hfill+\lambda\int_0^T \int_{\partial \O} \left( \frac{k}{2\lambda} - \frac{\tau\cdot \nu}{2} \right) ^2\varphi\dd\HH^{n-1}\dt \geq 0,
\end{multline}
for all $\kappa=(k,\tau) \in \R \times B$ and  all $\varphi \in \mathcal C^1_c(\R^n \times (0,T))$ with $\varphi \geq 0$. Choosing $k=0$ and $\tau=0$, we deduce that 
$$-\int_0^T \int_\O \big( \dot u\ddot u\varphi+\sigma \cdot \dot \sigma \varphi + (\sigma\cdot \nabla \varphi)\dot u -f\dot u \varphi \big)\dxdt\geq 0.$$
In particular, one can localize in time to get,  for all $\phi \in \mathcal C^1_c(\R^n)$ with $\phi \geq 0$ and for a.e. $t \in [0,T]$,
$$-\int_\O \big(\dot u(t)\ddot u(t)\phi+\sigma(t) \cdot \dot \sigma(t) \phi + (\sigma(t)\cdot \nabla \phi) \dot u(t) -f(t)\dot u(t) \phi \big)\dx\geq 0.$$
As a consequence, for a.e. $t \in [0,T]$, there exists a non-negative measure $\mu(t) \in \mathcal M(\R^n)$ compactly supported in $\overline \O$ such that for all $\phi \in \mathcal C^1_c(\R^n)$
\begin{equation}\label{defmut}
\langle\mu(t),\phi\rangle=-\int_\O \big(\dot u(t)\ddot u(t)\phi+\sigma(t) \cdot \dot \sigma(t) \phi + (\sigma(t)\cdot \nabla\phi) \dot u(t) -f(t)\dot u(t) \phi \big)\dx.
\end{equation}
Particular, according to Fubini's Theorem, the function $t \mapsto \langle\mu(t),\phi\rangle$ is measurable, and by density, this property remains true for all $\phi \in \mathcal C_c(\R^n)$. It shows the weak* measurability of the mapping $\mu:t \mapsto \mu(t) \in \mathcal M(\R^n)$. In addition, since $\mu(t)$ has compact support in $\overline \O$, we can take $\phi\equiv 1$ as test function in \eqref{defmut} which ensures that
$$\mu(t)(\R^n)=\langle \mu(t),1\rangle=-\int_\O \big(\dot u(t)\ddot u(t)+\sigma(t) \cdot \dot \sigma(t) -f(t)\dot u(t)  \big)\dx.$$
Using next the Cauchy-Schwarz inequality, we get that  
\begin{equation}\label{mut}
\supess_{t \in [0,T]} \mu(t)(\R^n) <+\infty
\end{equation}
which shows that $\mu \in L^\infty_{w*}(0,T;\mathcal M(\R^n))$.

\medskip

\noindent {\bf Step 3: Equation of motion.} In \eqref{eq:hv2}, choosing $\tau=0$ and $k \in \R$ arbitrary leads to
$$\int_0^T \int_\O \left( \ddot u \varphi +\sigma\cdot \nabla \varphi-f\varphi \right)\dxdt=0$$
for every $\varphi \in \mathcal C^1_c(\O \times (0,T))$, which implies that 
$$\ddot u-\Div \sigma=f \text{ in }L^\infty(0,T;L^2(\O)).$$
In particular $\sigma \in L^\infty(0,T;H(\Div,\O))$, and since by the stress constraint $\sigma \in L^\infty(\O \times (0,T);\R^n)$, then $\sigma \cdot \nu \in L^\infty(\partial \O\times (0,T))$. 
As a consequence, for all $\varphi \in \mathcal C^1_c(\R^n \times (0,T))$,
$$\int_0^T \int_\O \left( \ddot u \varphi +\sigma\cdot \nabla \varphi-f\varphi \right)\dxdt
=\int_0^T \int_{\partial \O} (\sigma\cdot \nu)\varphi\dd\HH^{n-1}\dt.$$
Reporting into \eqref{eq:hv2} leads to
\begin{multline}\label{eq:hv4}
-2\int_0^T \int_\O \big(\dot u\ddot u\varphi+\sigma \cdot \dot \sigma \varphi +( \sigma\cdot \nabla \varphi)\dot u -f\dot u \varphi \big)\dxdt\\
+ 2k\int_0^T \int_{\partial \O} (\sigma\cdot \nu)\varphi\dd\HH^{n-1}\dt+2\tau\cdot \int_0^T \int_\O \left( \dot \sigma \varphi +\dot u \nabla \varphi \right)\dxdt\\
\hfill+2\lambda\int_0^T \int_{\partial \O} \left( \frac{k}{2\lambda} - \frac{\tau\cdot \nu}{2} \right) ^2\varphi\dd\HH^{n-1}\dt \geq 0.
\end{multline}

\medskip

\noindent {\bf Step 3: Flow rule and additional regularity.} Choosing next $k=0$ in \eqref{eq:hv4}, we get that for all $\tau \in B$ and all  $\varphi \in \mathcal C^1_c(\O \times (0,T))$ with $\varphi\geq 0$,
$$-\tau\cdot \int_0^T \int_\O \left( \dot \sigma \varphi +\dot u \nabla \varphi \right)\dxdt \leq \int_0^T \int_\O \varphi \dd \mu(t)\dt.$$
Using the definition \eqref{eq:def_p} of $p$, we infer that
$$\tau \int_0^T \langle \dot p(t),\varphi(t) \rangle \dt \leq  \int_0^T \int_\O \varphi \dd \mu(t)\dt.$$
Since $\dot p \in L^2(0,T;H^{-1}(\O))$, we can localize with respect to time to get that, for a.e. $t \in [0,T]$ and all $\phi \in \mathcal C^1_c(\O)$ with $\phi\geq 0$, 
$$\tau \cdot \langle \dot p(t),\phi \rangle\leq  \int_\O \phi \dd \mu(t).$$
Passing to the supremum with respect to $\tau \in B$ and using \eqref{mut} yields
\begin{equation}\label{H-1}
| \langle \dot p(t),\phi \rangle| \leq  \int_\O \phi \dd \mu(t) \leq C_* \|\varphi\|_\infty,
\end{equation}
where $C_*>0$ is independent of $t$, which shows that $\dot p(t) \in \mathcal M(\O;\R^n)$. In particular, since $\mu(t) \geq 0$, we obtain that 
\begin{equation}\label{1618}
|\dot p(t)|\leq \mu(t) \quad \text{ in }\mathcal M(\O).
\end{equation}
We already know that $p \in W^{1,\infty}([0,T];H^{-1}(\O;\R^n))$ which ensures the measurability of the function $t \mapsto \langle \dot p(t),\phi\rangle$ 
 for all $\phi \in \mathcal C^1_c(\O)$. Then by density this property remains true for all $\phi \in \mathcal C_c(\O)$. It shows the weak* measurability of the mapping $t \mapsto \dot p(t) \in \mathcal M(\O;\R^n)$, and by \eqref{H-1}, 
 that $\dot p \in L^\infty_{w*}(0,T;\mathcal M(\O;\R^n))$. Then by definition of the distributional derivative we have that for all $\phi \in \mathcal C_c(\O)$, 
$t \mapsto  \langle p(t),\phi \rangle \in W^{1,\infty}([0,T];\R^n)$ and $\frac{\dd}{\dt}\langle p(t),\phi \rangle=\langle \dot p(t),\phi \rangle$ for a.e. $t \in [0,T]$. It thus shows that for all $0 \leq t_1 \leq t_2 \leq T$,
$$|\langle p(t_2)- p(t_1),\phi \rangle|= |\langle p(t_2),\phi \rangle-\langle p(t_1),\phi \rangle|=\left|\int_{t_1}^{t_2} \langle \dot p(t),\phi \rangle\dt\right| \leq C_*\|\phi\|_\infty(t_2-t_1).$$
Dividing the previous inequality by $\|\phi\|_\infty$ and passing to the supremum with respect to $\phi \in \mathcal C_c(\O)$ shows that $p\in \mathcal C^{0,1}([0,T];\mathcal M(\O;\R^n))$. By construction, we have $Du(t)=\sigma(t)+p(t)$ in $\mathcal M(\O;\R^n)$ for all $t \in [0,T]$, and therefore, thanks to the already established regularity of $u$ and $\sigma$, we infer that $u \in \mathcal C^{0,1}([0,T];BV(\O))$.

According to the definition \eqref{defmut} of the measure $\mu(t)$ and the equation of motion, we have that for all $\phi \in \mathcal C^1_c(\O)$ and for a.e. $t \in [0,T]$,
\begin{eqnarray*}
\langle\mu(t),\phi\rangle & =& -\int_\O \big(\dot u(t)\Div \sigma(t) \phi+\sigma(t) \cdot \dot \sigma(t) \phi + (\sigma(t)\cdot \nabla \phi)\dot u(t) \big)\dx\\
& =& \langle[\sigma(t)\cdot \dot p(t)],\phi\rangle
\end{eqnarray*}
which is well defined according to Definition \ref{def_Sigma_p} since, for a.e. $t \in [0,T]$, $\sigma(t) \in H(\Div,\O) \cap L^\infty(\O;\R^n)$, and $D\dot u(t)=\dot \sigma(t)+\dot p(t)$ with $\dot u(t) \in BV(\O) \cap L^2(\O)$, $\dot \sigma(t) \in L^2(\O;\R^n)$, and $\dot p(t)\in \mathcal M(\O;\R^n)$. Therefore, \eqref{1618} yields $|\dot p(t)|\leq [\sigma(t)\cdot \dot p(t)]$ in $\mathcal M(\O)$. On the other hand, using the stress constraint $\|\sigma(t)\|_\infty \leq 1$ and Remark \ref{rmk:ineq1}, the other inequality $|\dot p(t)|\geq [\sigma(t)\cdot \dot p(t)]$ in $\mathcal M(\O)$ follows, so that, finally
$$|\dot p(t)|=  [\sigma(t)\cdot \dot p(t)] \quad \text{ in }\mathcal M(\O).$$

\medskip

\noindent {\bf Step 4: Boundary condition.} Let $\varphi \in \mathcal C^1_c(\R^n \times (0,T))$ with $\varphi \geq 0$. 
Using, the integration by parts formula in $BV$, we get that 
$$ \int_0^T \int_\O \left( \dot \sigma \varphi +\dot u \nabla \varphi \right)\dxdt =-\int_0^T \int_\O \varphi \dd\dot p(t)\dt+ \int_0^T \int_{\partial \O} \varphi \dot u \nu\dd\HH^{n-1}\dt,$$
while the integration by parts formula given by Proposition \ref{IPP_sig_u_varphi} together with the equation of motion yields 
$$\langle\mu(t),\phi\rangle  = \langle[\sigma(t)\cdot \dot p(t)],\phi\rangle - \int_{\partial \O} (\sigma(t)\cdot\nu)\dot u(t)\phi\dd\HH^{n-1}.$$
Reporting inside \eqref{eq:hv4}, it follows that
\begin{multline*}
\int_0^T \int_\O \varphi \dd[(\sigma(t)-\tau)\cdot \dot p(t)]\dt \geq \int_0^T\int_{\partial \O} (\sigma\cdot\nu-\tau\cdot\nu)\dot u\varphi\dd\HH^{n-1}\dt\\
- k\int_0^T \int_{\partial \O} (\sigma\cdot \nu)\varphi\dd\HH^{n-1}\dt-\lambda\int_0^T \int_{\partial \O} \left( \frac{k}{2\lambda} - \frac{\tau\cdot \nu}{2} \right) ^2\varphi\dd\HH^{n-1}\dt,
\end{multline*}
and localizing in time, we get that for a.e. $t \in [0,T]$, and all $\phi \in W^{1,\infty}(\O)$ with $\phi \geq 0$. 
\begin{multline}\label{eq:relBC}
 \int_\O \phi \dd[(\sigma(t)-\tau)\cdot \dot p(t)] \geq \int_{\partial \O} (\sigma(t)\cdot\nu-\tau\cdot\nu)\dot u(t)\phi\dd\HH^{n-1}\\
- k\int_{\partial \O} (\sigma(t)\cdot \nu)\phi\dd\HH^{n-1}-\lambda \int_{\partial \O} \left( \frac{k}{2\lambda} - \frac{\tau\cdot \nu}{2} \right) ^2\phi\dd\HH^{n-1}.
\end{multline}
We next wish to localize the previous relation in space. To this aim, we define,
$$\phi_\e(x):=
\begin{cases}
0 & \text{ if } \dist(x,\partial \O)\geq\e,\\
\frac{\e-\dist(x,\partial \O)}{\e} & \text{ if }\dist(x,\partial \O)< \e.
\end{cases}$$
Note that $\phi_\e \in W^{1,\infty}(\O)$, $\phi_\e \geq 0$ and since $\O$ is of class $\mathcal C^2$, then for $\e>0$ small enough, we have
$\nabla \phi_\e(x-\e s \nu(x))= \nu(x)/\e$ for all $x \in \partial \O$ and all $s \in [0,1]$. Using that $\phi_\e=1$ on $\partial \O$, we get that for all $\zeta \in W^{1,\infty}(\O)$ with $\zeta	 \geq 0$,
\begin{multline}\label{1421}
 \int_\O \phi_\e\zeta \dd[(\sigma(t)-\tau)\cdot \dot p(t)] =
 -\int_\O (\sigma(t)-\tau)\cdot \dot \sigma(t) \phi_\e\zeta\dx -\int_\O \dot u(t) \Div \sigma(t) \phi_\e\zeta \dx\\
 -\int_\O (\sigma(t)-\tau)\cdot (\nabla\zeta) \phi_\e \dot u(t)\dx  -\int_\O (\sigma(t)-\tau)\cdot (\nabla \phi_\e)\zeta \dot u(t)\dx\\
   + \int_{\partial \O} (\sigma(t)\cdot \nu-\tau\cdot \nu)\dot u(t)\zeta\dd\HH^{n-1}.
\end{multline}
Since $\phi_\e \to 0$ strongly in $L^1(\O)$, the dominated convergence theorem ensures that the three first integrals in the right hand side of \eqref{1421} tend to zero as $\e \to 0$. Then, according to the coarea formula (see \cite[Lemma 3.2.34]{Federer}), denoting $\O_\e:=\{x \in \O : \dist(x,\partial \O)< \e\}$, the fourth integral writes as
\begin{multline*}
\int_\O (\sigma(t)-\tau)\cdot (\nabla \phi_\e)\zeta \dot u(t)\dx=\int_{\O_\e} (\sigma(t)-\tau)\cdot (\nabla \phi_\e)\zeta \dot u(t)\dx\\
=\int_0^1 \int_{\partial \O} \big(\sigma(x-s\e\nu(x),t) -\tau \big) \cdot \nu(x) \dot u(x-s\e\nu(x),t)\zeta(x-s\e \nu(x))\dd\HH^{n-1}(x)\ds.
\end{multline*}
Therefore, according to \eqref{eq:sigmanu} and \eqref{eq:traceu}, we obtain
$$\lim_{\e \to 0}\int_\O (\sigma(t)-\tau)\cdot (\nabla \phi_\e)\zeta \dot u(t)\dx=\int_{\partial \O} (\sigma(t)\cdot \nu -\tau\cdot \nu) \dot u(t)\zeta\dd\HH^{n-1}.$$
Inserting inside \eqref{1421}, we get that
$$\int_\O \phi_\e\zeta \dd[(\sigma(t)-\tau)\cdot \dot p(t)]  \to 0,$$
and \eqref{eq:relBC} yields for all $\zeta \in W^{1,\infty}(\O)$ with $\zeta \geq 0$,
$$0 \geq \int_{\partial \O} (\sigma(t)\cdot\nu-\tau\cdot\nu)\dot u(t)\zeta\dd\HH^{n-1}
- k\int_{\partial \O} (\sigma(t)\cdot \nu)\zeta\dd\HH^{n-1}-\lambda \int_{\partial \O} \left( \frac{k}{2\lambda} - \frac{\tau\cdot \nu}{2} \right)^2\zeta\dd\HH^{n-1}.$$
As a consequence, we get that for a.e. $t \in [0,T]$ and all $(k,\tau) \in \R \times B$, then
\begin{equation}\label{eq:loc}
- (\sigma(t)\cdot\nu-\tau\cdot\nu)\dot u(t) \geq
- k \sigma(t)\cdot \nu-\lambda  \left( \frac{k}{2\lambda} - \frac{\tau\cdot \nu}{2} \right)^2\quad \HH^{n-1}\text{-a.e. on }\partial \O.
\end{equation}
Note that the maximum of the right-hand side with respect to $k \in \R$, is attained at some $k^* \in \R$, which satisfies according to the first order condition
$$-\sigma(t)\cdot \nu-\left( \frac{k^*}{2\lambda} - \frac{\tau\cdot \nu}{2} \right)=0.$$
Replacing in \eqref{eq:loc}, and taking $\tau=-z\nu$ for any $z \in [-1,1]$, leads to
$$-(\sigma(t)\cdot \nu+z) \geq \lambda (\sigma(t)\cdot \nu)^2+\lambda z\sigma(t)\cdot \nu,$$
or still, thanks to Young's inequality
$$\frac{\lambda}{2}z^2 \geq \frac{\lambda}{2}(\sigma(t)\cdot \nu)^2 +\dot u(t)(z+\sigma(t)\cdot \nu).$$
Finally, from Remark \ref{rem:psi}, we deduce that $\dot u(t) \in\partial  \psi^*_\lambda(-\sigma(t)\cdot \nu)$, or equivalently that
$\sigma(t)\cdot \nu+\psi'_\lambda(\dot u(t))=0$.
\end{proof}

\begin{remark}
In the variational framework, the initial data must satisfy the hypotheses~\eqref{eq:initial_conditions}. Here, the constraint is also satisfied by the initial data. Note that since the hyperbolic variables are the velocity and the stress, the additive decomposition of the initial data has been ensured by the construction of the plastic strain. However, we did not suppose that the initial data satisfies the boundary condition.
\end{remark}

\section{Acknowledgements}
The authors wish to express their gratitude to Bruno Despr\'es and Nicolas Seguin for helpful and stimulating discussions on this paper.

\bibliographystyle{plain}
\bibliography{bibtex_MB_ARMA}   

\end{document}